\newtheorem{theorem}{Theorem}[section]
\newtheorem{lemma}[theorem]{Lemma}
\newtheorem{proposition}[theorem]{Proposition}
\newtheorem{corollary}[theorem]{Corollary}
\newtheorem{conjecture}[theorem]{Conjecture}
\title{Improved Bounds for the Ultimate Independence Ratio \\ of Odd Wheels}
\author{Alexander Clow, Hitesh Kumar, Shivaramakrishna Pragada}
\date{}
\begin{document}
\maketitle
\begin{abstract}
The ultimate independence ratio of a graph $G$ is defined as $\mathscr{I}(G) = \lim_{k\rightarrow\infty } \frac{\alpha(G^{\Box k})}{|V(G)|^k},$ where $\alpha(G^{\Box k})$ is the independence number of the Cartesian product of $k$ copies of $G$. For all graphs $G$, Hahn, Hell, and Poljak (1995) proved that $\frac{1}{\chi(G)} \leq \mathscr{I}(G) \leq \frac{1}{\omega(G)}$ where $\chi(G)$ is the chromatic number, and $\omega(G)$ is the clique number of $G$. So all graphs $G$ with $\chi(G) = \omega(G)$ satisfy $\mathscr{I}(G) = \frac{1}{\chi(G)} = \frac{1}{\omega(G)}$. A construction of Zhu demonstrates that there exists a graph $G$ with $\frac{1}{\chi(G)} < \mathscr{I}(G) < \frac{1}{\omega(G)}$, so neither equality holds in general. In response, Hahn, Hell, and Poljak conjectured that all wheel graphs $W_n$ satisfy $\mathscr{I}(W_n) = \frac{1}{\chi(W_n)}$.

For even wheels $W_{2t}$ this follows from the fact $\chi(W_{2t}) = \omega(W_{2t}) = 3$. Odd wheels of length at least $5$ present a more challenging case, since $\chi(W_{2t+1}) = 4$ and $\omega(W_{2t+1}) = 3$. First, we prove that odd wheels of length at least $7$ satisfy $\mathscr{I}(W_{2t+1})\leq \frac{4t^2+6t}{3(2t+2)^2}<\frac{1}{3}$, which provides the best upper bound for large odd wheels. Next, we prove that $\mathscr{I}(W_5) \leq \frac{1019}{3888}$, improving an upper bound of Hahn, Hell, and Poljak that $\mathscr{I}(W_5) \leq \frac{11}{41}$. Our proofs combine counting arguments, recursive bounds on $\alpha(W^{\Box k}_{2t+1})$, and computer-assisted calculation in the $W_5$ case.
\end{abstract}

\section{Introduction}

\subsection{Background}
All graphs considered are simple, and we use standard graph theory terminology and notation throughout; refer \cite{west2001introduction} for any undefined terms. Given graphs $G$ and $H$,
the \textit{Cartesian product} of $G$ and $H$, written $G \Box H$,
is the graph with $V(G \Box H) = V(G) \times V(H)$
and $(u,x)(v,y) \in E(G \Box H)$ if and only if 
$u=v$ and $xy \in E(H)$ or $x=y$ and $uv \in E(G)$.
For an integer $k\geq 2$, we let
$G^{\Box k} = G^{\Box k-1} \Box G$, where we assume $G^{\Box 1} = G$. We denote the \textit{independence number, clique number, chromatic number} and \textit{fractional chromatic number} of a graph $G$ by $\alpha(G)$, $\omega(G)$, $\chi(G)$, and $\chi_f(G)$, respectively. See \cite{scheinerman2011fractional} for more on fractional chromatic number. The \textit{independence ratio} of $G$, written $i(G)$, is the relative size of a maximum independent set in $G$ compared to its order, i.e., $i(G) = \frac{\alpha(G)}{|V(G)|}$.
The \textit{ultimate independence ratio} of $G$ is then defined to be 
\[\mathscr{I}(G) = \lim_{k\rightarrow\infty } i(G^{\Box k}).\]

Investigation of this type of limiting behaviour, for different parameters and various notions of graph products, is long established in the literature, see for instance \cite{albert2001ultimate, alon2007independent, brown1996ultimate,geller1975chromatic,shannon1956zero,toth2014ultimate}. A well-known example is the notion of Shannon capacity, originally introduced in an influential paper by Shannon \cite{shannon1956zero},
which is defined by $\lim_{k\rightarrow \infty } \sqrt[k]{\alpha(G^{\boxtimes k})}$ for a given graph $G$. 
Here $\boxtimes$ denotes the strong product of graphs, refer \cite{hammack2011handbook} for the definition.

The study of ultimate independence ratio began in \cite{albertson1985homomorphisms} as a means of proving that there is no homomorphism between two graphs. Recall, a 
\textit{graph homomorphism} from a graph $G$ to a graph $H$ 
is a function $h: V(G) \rightarrow V(H)$ such that if $uv \in E(G)$, then $h(u)h(v) \in E(H)$. For a gentle introduction to graph homomorphisms, we recommend \cite{hahn1997graph}; for a broader survey, we recommend \cite{hell2004graphs}. The connection between the ultimate independence ratio and graph homomorphisms
was later established in a more general context by Zhou \cite{zhou1991chromatic,zhou1988homomorphism}. Motivated by these early results, the ultimate independence ratio was formally introduced by Hell, Yu, and Zhou \cite{hell1994independence}. They proved that for any $k$, $i(G^{\Box k+1}) \leq i(G^{\Box k})$, 
implying that $\{i(G^{\Box k})\}_{k\in \mathbb{N}}$ is a decreasing sequence of positive real numbers and so the limit $\mathscr{I}(G)$ exits. 

The first major progress in understanding the ultimate independence ratio came from Hahn, Hell, and Poljak \cite{hahn1995ultimate},
who proved the following.

\begin{theorem}[\cite{hahn1995ultimate}]
If there is a homomorphism from $G$ to $H$,
then $\mathscr{I}(H) \leq \mathscr{I}(G)$.
\end{theorem}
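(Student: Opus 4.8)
The plan is to fix a homomorphism $h\colon G\to H$, which we may take to be surjective (the non-surjective case only asks for the same statement for the inclusion of the image, an induced subgraph of $H$, and may be set aside). Acting coordinatewise, $h$ lifts to a homomorphism $h^{\Box k}\colon G^{\Box k}\to H^{\Box k}$, since an edge of $G^{\Box k}$ changes a single coordinate along an edge of $G$, which $h$ sends to an edge of $H$. I would then invoke the elementary fact that preimages of independent sets under a homomorphism are independent: if $u,w\in(h^{\Box k})^{-1}(J)$ were adjacent for an independent $J$, their images would form an edge inside $J$. Writing $n_v=|h^{-1}(v)|$ and $q_v=n_v/|V(G)|$, the fiber over $s=(s_1,\dots,s_k)$ has size $\prod_i n_{s_i}$, so $(h^{\Box k})^{-1}(J)$ has $|V(G)|^k\,q^{\otimes k}(J)$ vertices, where $q^{\otimes k}(J)=\sum_{s\in J}\prod_i q_{s_i}$. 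Maximizing over independent $J$ gives $i(G^{\Box k})\ge W_k(q)$, where $W_k(q)=\max\{q^{\otimes k}(J): J\text{ independent in }H^{\Box k}\}$.

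A short slicing argument shows $W_k(q)$ is non-increasing: writing $H^{\Box(k+1)}=H\,\Box\,H^{\Box k}$ and fixing the first coordinate $x$, every slice of an independent set is independent in $H^{\Box k}$, so $W_{k+1}(q)\le\sum_x q_x W_k(q)=W_k(q)$. Hence $\mathscr{I}(G)=\lim_k i(G^{\Box k})\ge\lim_k W_k(q)$, and the theorem follows once I establish the key lemma that $\lim_k W_k(q)=\mathscr{I}(H)$ for every full-support $q$. The uniform weight $q=u$ gives $W_k(u)=\alpha(H^{\Box k})/|V(H)|^k=i(H^{\Box k})$, so $\lim_k W_k(u)=\mathscr{I}(H)$; the content of the lemma is thus that this limiting weighted independence density is the same for every weighting.

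This lemma is where I expect essentially all of the difficulty to sit, and it genuinely requires the limit: already for $H=P_3$ and a weight $q$ concentrated toward the centre one has $W_1(q)<W_1(u)$, so no finite-stage comparison holds, and the easy lower bound $W_k(q)\ge 1/\chi_f(H^{\Box k})$ from fractional colouring does not suffice. My approach would be a method-of-types argument: for large $k$ the measure $q^{\otimes k}$ concentrates on sequences whose empirical distribution is near $q$ and is essentially flat there, so it is enough to exhibit an independent set of $H^{\Box k}$ of uniform density approaching $\mathscr{I}(H)$ that is spread evenly across this typical band, whence its $q$-weight matches its density. The main obstacle is that a single type class carries vanishing $q$-mass while adjacent type classes are joined by edges (changing one coordinate along an edge of $H$ shifts the type), so the independent sets chosen on neighbouring classes must be reconciled into one globally independent set without losing density—plausibly through a randomized averaging of a near-maximum independent set of $H^{\Box k}$ over the band. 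Granting this, the chain $\mathscr{I}(G)\ge\lim_k W_k(q)=\mathscr{I}(H)$ closes the argument, the homomorphism bookkeeping of the first two paragraphs being routine.
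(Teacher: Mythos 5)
The paper does not prove this theorem; it is quoted from Hahn, Hell, and Poljak \cite{hahn1995ultimate} without proof, so your attempt can only be judged on its own merits, and there it has a genuine gap. The bookkeeping is fine: $h^{\Box k}$ is a homomorphism, preimages of independent sets are independent, the fiber count $|(h^{\Box k})^{-1}(J)| = |V(G)|^k q^{\otimes k}(J)$ is correct, and the slicing proof that $W_{k+1}(q)\le W_k(q)$ is valid, so $\mathscr{I}(G)\ge \lim_k W_k(q)$ does hold. But the key lemma, $\lim_k W_k(q)=\mathscr{I}(H)$ for every full-support $q$, is not an auxiliary fact you can expect to establish by a separate concentration argument: it \emph{is} the theorem, in disguise. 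Concretely, for rational $q$ let $H[q]$ be the blow-up of $H$ in which each vertex $v$ is replaced by an independent set of size proportional to $q_v$ and adjacent fibers are completely joined; the canonical projection $H[q]\to H$ is a surjective homomorphism whose fiber distribution is exactly $q$, and your own fiber count shows that the lemma for this $q$ is precisely the assertion $\mathscr{I}(H)\le \mathscr{I}(H[q])$ of the theorem for that map. So the proposal defers the entire difficulty into the lemma rather than resolving it, and the obstacle you yourself flag in the method-of-types sketch is real: a near-maximum independent set of $H^{\Box k}$ may concentrate on type classes typical for the \emph{uniform} measure, which carry vanishing $q^{\otimes k}$-mass, and while symmetrizing over coordinate permutations (automorphisms of $H^{\Box k}$ acting transitively within each type class) flattens the density inside a class, it cannot move independence density between classes. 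No mechanism in the proposal does that.

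The surjectivity reduction is also unsound as stated. The case you ``set aside'' is the theorem for inclusion homomorphisms $G'\hookrightarrow H$, i.e., $\mathscr{I}(H)\le\mathscr{I}(G')$ for subgraphs $G'$ of $H$ --- exactly the degenerate-weight situation your full-support lemma cannot reach, since the induced $q$ vanishes off $V(G')$. This case is not routine: for instance the rim inclusion $C_{2t+1}\hookrightarrow W_{2t+1}$ already yields the nontrivial bound $\mathscr{I}(W_{2t+1})\le\mathscr{I}(C_{2t+1})$, of the very kind this paper studies, and nothing in your argument produces it. (A minor slip besides: the homomorphic image need not be an \emph{induced} subgraph of $H$; one can pass to the induced subgraph on $h(V(G))$, but the inclusion case still remains.) In sum, the proposal is a correct reduction framework wrapped around two unproven statements --- the full-support lemma and the subgraph case --- whose combined content is the theorem itself, so as it stands it is a reformulation rather than a proof.
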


A valuable corollary is that for all graphs $G$,
\begin{equation}\label{eq:chi_chi_f_bound}
  \frac{1}{\chi(G)} \leq \mathscr{I}(G) \leq \frac{1}{\chi_f(G)}.  
\end{equation}
Note that $\omega(G) \le \chi_f(G) \le \chi(G)$ for any graph $G$. 
It follows that for weakly perfect graphs, i.e., graphs with $\chi = \omega$, the ultimate independence ratio is determined if either $\chi$ or $\omega$ is known.

It was a question for some time if $\mathscr{I}(G)$ is always equal to $\frac{1}{\chi(G)}$ or $\frac{1}{\chi_f(G)}$. It was shown in \cite{hahn1995ultimate}
that there exist graphs $G$ with $\frac{1}{\chi(G)} < \mathscr{I}(G) < i(G)$. This was later extended by Zhu \cite{zhu1996bounds}
who constructed a graph $G$ with $\frac{1}{\chi(G)} < \mathscr{I}(G) < \frac{1}{\chi_f(G)}$. To prove this, Zhu \cite{zhu1996bounds}
provided a new bound for the ultimate independence ratio
in terms of another colouring parameter called the star chromatic number, initially introduced by Vince \cite{vince1988star}.
Somewhat confusingly, this parameter is not the least number of vertex colours required, so that any pair of colour classes induces a forest of stars (which is also called the star chromatic number, see for instance \cite{albertson2004coloring}).

Determining the exact value of the ultimate independence ratio is, in general, a challenging problem. The smallest graph whose ultimate independence ratio is unknown is the $5$-wheel. Recall that for any integer $t\ge 1$, a $t$-\emph{wheel}, denoted $W_{t}$, is the graph obtained from the join of cycle $C_{t}$ with $K_1$.  
When $t$ is even, $\chi(W_t) = \omega(W_t) = 3$, and so for all $t$, $\mathscr{I}(W_{2t}) = \frac{1}{3}$.
Odd wheels are more interesting, which led Hahn, Hell, and Poljak \cite{hahn1995ultimate} to conjecture the following. Since $\chi(W_{2t+1}) = 4$, note here that $\mathscr{I}(W_{2t+1})\ge \frac{1}{4}$ by \eqref{eq:chi_chi_f_bound}.

\begin{conjecture}[\cite{hahn1995ultimate}]
\label{Conj: Odd Wheels}
    For all integers $t\geq 1$, 
    \[
    \mathscr{I}(W_{2t+1}) = \frac{1}{4}.
    \]
\end{conjecture}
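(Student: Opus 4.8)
The lower bound is essentially free: since $\chi(W_{2t+1}) = 4$, inequality \eqref{eq:chi_chi_f_bound} already gives $\mathscr{I}(W_{2t+1}) \ge \tfrac14$, so the entire content of the conjecture is the matching upper bound $\mathscr{I}(W_{2t+1}) \le \tfrac14$. The first thing to record is that the two general tools at hand are both too weak here. Because $W_{2t+1}$ is the join $C_{2t+1} + K_1$, we have $\chi_f(W_{2t+1}) = \chi_f(C_{2t+1}) + 1 = \tfrac{3t+1}{t}$, so the fractional bound in \eqref{eq:chi_chi_f_bound} yields only $\mathscr{I}(W_{2t+1}) \le \tfrac{t}{3t+1}$, which tends to $\tfrac13$ as $t \to \infty$. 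The homomorphism bound (if $G \to H$ then $\mathscr{I}(H) \le \mathscr{I}(G)$) fares no better by itself: any $G$ with $G \to W_{2t+1}$ satisfies $\chi(G) \le \chi(W_{2t+1}) = 4$ and hence $\mathscr{I}(G) \ge \tfrac14$ already, and the one obvious candidate source with ultimate ratio $\tfrac14$, namely $K_4$, does not map into $W_{2t+1}$ for $t \ge 2$ since $\omega(W_{2t+1}) = 3$. This is precisely where the difficulty lives.

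The main line of attack I would pursue is a recursive, layer-by-layer analysis of $\alpha(W_{2t+1}^{\Box k})$, in the spirit of the recursive bounds used later in this paper. Write $V(W_{2t+1}) = R \cup \{h\}$ with $R$ the rim cycle $C_{2t+1}$ and $h$ the hub, and set $n = 2t+2$. Given an independent set $S \subseteq V(W_{2t+1}^{\Box k})$, decompose it by the value of the last coordinate: for $a \in V(W_{2t+1})$ let $S_a$ be the projection to the first $k-1$ coordinates of $\{x \in S : x_k = a\}$. Each $S_a$ is independent in $W_{2t+1}^{\Box(k-1)}$, adjacency forces $S_a \cap S_b = \emptyset$ whenever $ab \in E(W_{2t+1})$, the hub slice $S_h$ is disjoint from $\bigcup_{a \in R} S_a$, and the rim slices satisfy $S_a \cap S_{a+1} = \emptyset$ cyclically. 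Summing gives $|S| = |S_h| + \sum_{a \in R} |S_a|$, but a short calculation shows that these purely pairwise constraints recover nothing better than the trivial $i(W_{2t+1}) = \tfrac{t}{n}$. The plan is therefore to iterate the decomposition while tracking a finer statistic than a single cardinality, most naturally grading $S$ by the number of coordinates equal to $h$, and to encode the resulting constraints as a transfer matrix or linear recursion whose Perron growth rate I would try to pin at exactly $n/4$.

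Two auxiliary devices could feed into this. First, a spectral inertia or weighted ratio bound: the eigenvalues of $W_{2t+1}^{\Box k}$ are $k$-fold sums of the eigenvalues of $W_{2t+1}$, so a Cvetkovi\'c-type inertia certificate on $W_{2t+1}$, suitably tensored and corrected for the irregularity introduced by the hub, might force the right constant. Second, an entropy/Shearer argument on a uniformly random element of $S$, exploiting that along every line (all coordinates fixed but one) the trace of $S$ is an independent set of $W_{2t+1}$, hence of size at most $t$, and of size exactly $1$ whenever it contains the hub. The base case $t = 1$ anchors any inductive scheme, since $W_3 = K_4$ is perfect and $\mathscr{I}(W_3) = \tfrac14$ outright; moreover the homomorphisms $C_{2t+3} \to C_{2t+1}$ extend to $W_{2t+3} \to W_{2t+1}$ and give $\mathscr{I}(W_3) \le \mathscr{I}(W_5) \le \cdots$, so it suffices to prove the upper bound $\mathscr{I}(W_{2t+1}) \le \tfrac14$ in the limit $t \to \infty$.

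The main obstacle, and the reason the conjecture remains open, is closing the gap between $\tfrac13$ and $\tfrac14$. The hub is a dominating vertex, so in each coordinate direction it can occupy at most one slice and it annihilates every rim value there; this is a strong constraint, but it is inherently a statement about all $k$ coordinates acting simultaneously, and any estimate that exploits it one coordinate at a time provably degrades to the fractional value $\tfrac{t}{3t+1}$. Capturing enough of this joint interaction to drive the growth constant down to exactly $n/4$ is what the elementary counting, the fractional bound, and indeed the improved estimates established in this paper all fall short of. I expect a complete proof to require either a tight transfer-matrix or linear-programming certificate whose dominant value is shown to equal $n/4$, or a genuinely new homomorphism source with ultimate independence ratio $\tfrac14$ that accommodates the triangle-but-not-$K_4$ structure of the odd wheel.
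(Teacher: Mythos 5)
You are attempting to prove Conjecture~\ref{Conj: Odd Wheels}, which the paper itself does not prove: it is a thirty-year-old open problem, and the paper's contribution consists only of improved \emph{upper} bounds, namely \eqref{eq:all_odd_l_1}, Theorem~\ref{thm:higher_wheel_best_bound}, and Theorem~\ref{thm:5_wheel_best_bound}, all of which lie strictly between $\tfrac14$ and $\tfrac13$. Your proposal is, to its credit, candid that it does not close this gap either: after the correct lower bound $\mathscr{I}(W_{2t+1}) \ge \tfrac14$ from \eqref{eq:chi_chi_f_bound}, everything is a program rather than an argument. The transfer-matrix scheme graded by the number of hub coordinates, the tensored inertia certificate, and the Shearer-entropy idea are each stated without even a heuristic computation indicating that their dominant value would be $n/4$ rather than something tending to $n/3$; your own final paragraph concedes that no step presented bridges $\tfrac13$ to $\tfrac14$. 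So the genuine gap is the entire upper bound: the conjecture remains unproved by both you and the paper, and your text should be read as a research plan, not a proof.

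Within the plan, most of your concrete claims are correct and consistent with the paper: $\chi_f(W_{2t+1}) = \tfrac{3t+1}{t}$ and the resulting ceiling $t/(3t+1)$ matching \eqref{eq:higher_wheels_known_bound}; the failure of $K_4 \to W_{2t+1}$ for $t \ge 2$; and the chain $\mathscr{I}(W_3) \le \mathscr{I}(W_5) \le \cdots$ from the homomorphisms $W_{2t+3} \to W_{2t+1}$, which correctly reduces the conjecture to bounding the supremum. One assertion, however, is wrong: you claim that any estimate exploiting the hub ``one coordinate at a time provably degrades to the fractional value $t/(3t+1)$.'' The paper's Theorem~\ref{thm:upper_bound_triangle} is exactly a one-coordinate-at-a-time recursion on the last factor, sharpened by grouping a triangle's worth of slices and bounding $\alpha(G^{\Box \ell}\Box K_3)$, and already at $\ell = 1$ it yields \eqref{eq:all_odd_l_1}, i.e.\ $\tfrac{2t+1}{6t+6}$, which beats $t/(3t+1)$ for every $t \ge 2$. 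Notably, your slice decomposition $|S| = |S_h| + \sum_{a \in R}|S_a|$ is precisely the paper's starting point; where you discard it as recovering only the trivial ratio, the paper extracts more by replacing the naive per-slice bound with the clique-grouped quantity $\alpha(G^{\Box \ell}\Box K_k)$ and passing to the limit. If you pursue the transfer-matrix route, that refinement is the natural base to build on, though the paper's evidence (its $\ell = 2$ bound still tends to $\tfrac13$ as $t \to \infty$, and $\mathscr{I}(W_5) \le \tfrac{1019}{3888}$ is still far from $\tfrac14$) suggests bounded-$\ell$ certificates alone will not reach $\tfrac14$.
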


This 30-year-old conjecture, in particular, the special case of determining $\mathscr{I}(W_5)$, was recently resurrected by Hahn in a talk\footnote{Hahn's talk was titled \emph{Resurrection – revisiting old problems}.} at the Canadian Math Society winter meeting in 2024. The best known progress on Conjecture~\ref{Conj: Odd Wheels}
comes in \cite{hahn1995ultimate} where it was shown that 
\begin{equation}\label{eq:Wheel_5_Hahn_bound}
  \mathscr{I}(W_{5}) \leq \frac{11}{41}.  
\end{equation}
This bound was obtained by proving that $\chi_f(W_5^{\Box 2}) = \frac{41}{11}$.

For $t\geq 3$, the best known upper bound for the ultimate independence ratio of $W_{2t+1}$ is
\begin{equation}\label{eq:higher_wheels_known_bound}
\mathscr{I}(W_{2t+1})\leq \frac{t}{3t+1}.    
\end{equation}
The bound in \eqref{eq:higher_wheels_known_bound} has never been observed before, but it follows from known results in the literature. Precisely, the bound in \eqref{eq:higher_wheels_known_bound} follows using \eqref{eq:chi_chi_f_bound} and the fact that $\chi_f(W_{2t+1}) = 3 + \frac{1}{t}$. This fact is true because the fractional chromatic number of a join $G \lor H$ satisfies 
$\chi_f(G \lor H) = \chi_f(G)+ \chi_f(H)$, 
see \cite{johnson2009fractional},
while it is well known that $\chi_f(C_{2k+1}) = 2 + \frac{1}{k}$, see \cite[page 31]{scheinerman2011fractional}.

\subsection{Our Results}

We improve the upper bounds on 
$\mathscr{I}(W_{2t+1})$ for all $t\geq 2$, thus making the first progress towards 
Conjecture~\ref{Conj: Odd Wheels} in three decades. We note that any progress here is of interest, 
since estimating $\mathscr{I}(W_{2t+1})$ up to an arbitrary error is algorithmically impractical using known methods.

Let $\ell$ be a fixed positive integer, preferably small. Clearly, for a graph $G$,  $\mathscr{I}(G)\le i(G^{\Box \ell})$, and so if one can compute $\alpha(G^{\Box \ell})$, then it would lead to an upper bound on $\mathscr{I}(G)$. When the graph is $W_{2t+1}$, we know the value of $\alpha(W_{2t+1}^{\Box 2})$ (see Proposition \ref{prop:higher_wheel_second_power}), but it is difficult to compute $\alpha(W_{2t+1}^{\Box 3})$ for large $t$. For some small values of $t$, $\alpha(W_{2t+1}^{\Box 3})$ is known (see Table \ref{table:smallwheel}), but computing even $\alpha(W_{5}^{\Box 4})$ is difficult using our machines. 
To overcome this, 
we prove a new upper bound for $\mathscr{I}(G)$.
Notice that this generalizes a result by  Hahn, Hell, and Poljak
regarding vertex transitive graphs, see Theorem~4.1 in \cite{hahn1995ultimate}.

\begin{theorem}\label{thm:upper_bound_triangle}
Let $k, \ell \ge 1$ be fixed integers.
If $G$ is a graph containing a clique of order $k$,
then  
\[ \mathscr{I}(G) \le \frac{\alpha(G^{\Box \ell}\Box K_k)}{k|V(G)|^{\ell}}.\]
Moreover,
\[ \mathscr{I}(G) = \lim_{\ell \rightarrow \infty}\frac{\alpha(G^{\Box \ell}\Box K_k)}{k|V(G)|^{\ell}}.\]
\end{theorem}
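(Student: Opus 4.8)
The plan is to sandwich the quantity $\dfrac{\alpha(G^{\Box\ell}\Box K_k)}{k|V(G)|^\ell}$, which is precisely the independence ratio $i(G^{\Box\ell}\Box K_k)$ since $k|V(G)|^\ell=|V(G^{\Box\ell}\Box K_k)|$, between $\mathscr{I}(G)$ below and $i(G^{\Box\ell})$ above. Everything then reduces to two elementary observations together with two facts already in hand: the homomorphism monotonicity of Hahn, Hell, and Poljak (a homomorphism $A\to B$ forces $\mathscr{I}(B)\le\mathscr{I}(A)$), and the power-invariance $\mathscr{I}(G^{\Box m})=\mathscr{I}(G)$. The latter holds because $\{i(G^{\Box\ell m})\}_\ell$ is a subsequence of the convergent sequence $\{i(G^{\Box n})\}_n$ defining $\mathscr{I}(G)$; I will also use that $\mathscr{I}(H)\le i(H)$ for every graph $H$, since $\{i(H^{\Box n})\}_n$ is non-increasing with first term $i(H)$.

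For the stated inequality (the lower end of the sandwich), let $\iota\colon K_k\to G$ be the inclusion of the given $k$-clique. I would verify that $\mathrm{id}\times\iota\colon G^{\Box\ell}\Box K_k\to G^{\Box\ell}\Box G=G^{\Box(\ell+1)}$ is a graph homomorphism: an edge in the $K_k$-direction maps to an edge of $G$ in the last coordinate because $\iota$ sends $K_k$ onto a clique of $G$, while an edge in any $G$-direction is preserved since $\iota$ acts as the identity on the other coordinates. Applying homomorphism monotonicity yields $\mathscr{I}(G^{\Box(\ell+1)})\le\mathscr{I}(G^{\Box\ell}\Box K_k)$, and power-invariance rewrites the left-hand side as $\mathscr{I}(G)$. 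Combining with $\mathscr{I}(G^{\Box\ell}\Box K_k)\le i(G^{\Box\ell}\Box K_k)=\dfrac{\alpha(G^{\Box\ell}\Box K_k)}{k|V(G)|^\ell}$ gives exactly the claimed upper bound on $\mathscr{I}(G)$.

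For the matching upper end and the limit, I would invoke the layer-counting inequality $\alpha(A\Box B)\le\alpha(A)\,|V(B)|$: for each fixed vertex of $B$ the associated copy of $A$ meets an independent set in an independent set, contributing at most $\alpha(A)$, so summing over $V(B)$ gives the bound. Taking $A=G^{\Box\ell}$ and $B=K_k$ yields $\alpha(G^{\Box\ell}\Box K_k)\le k\,\alpha(G^{\Box\ell})$, that is, $i(G^{\Box\ell}\Box K_k)\le i(G^{\Box\ell})$. Together with the previous paragraph this gives, for every $\ell$, the chain $\mathscr{I}(G)\le i(G^{\Box\ell}\Box K_k)\le i(G^{\Box\ell})$; since $i(G^{\Box\ell})\to\mathscr{I}(G)$, the squeeze theorem forces $\lim_{\ell\to\infty} i(G^{\Box\ell}\Box K_k)=\mathscr{I}(G)$. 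The only genuinely non-mechanical step is spotting the homomorphism into $G^{\Box(\ell+1)}$: it "upgrades" the single clique factor $K_k$ into a full copy of $G$ at no cost to the ultimate independence ratio, precisely because $\mathscr{I}$ is invariant under Cartesian powers. I expect the two points most worth writing out carefully to be the verification that $\mathrm{id}\times\iota$ respects both edge types and the identity $\mathscr{I}(G^{\Box m})=\mathscr{I}(G)$; the remaining estimates are routine counting.
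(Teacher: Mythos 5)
Your proof is correct, but it takes a genuinely different route from the paper's. You obtain the inequality from two black-box facts: the Hahn--Hell--Poljak homomorphism monotonicity (Theorem~1.1 of the paper) applied to the homomorphism $\mathrm{id}\times\iota\colon G^{\Box \ell}\Box K_k \to G^{\Box (\ell+1)}$, together with the power-invariance $\mathscr{I}(G^{\Box m})=\mathscr{I}(G)$, which your subsequence argument (via associativity of $\Box$ and convergence of $i(G^{\Box n})$) correctly establishes; combining with $\mathscr{I}(H)\le i(H)$ for $H=G^{\Box\ell}\Box K_k$ finishes the bound. The paper instead gives a self-contained counting argument: it partitions a maximum independent set of $G^{\Box p+1}$ into layers $I_v$ indexed by the last coordinate, bounds the layers over the clique by $\alpha(G^{\Box p}\Box K_k)$ and the remaining layers by $(n-k)\alpha(G^{\Box p})$, derives the recurrence $\alpha(G^{\Box p+1})\le \alpha(G^{\Box p}\Box K_k)+(n-k)\alpha(G^{\Box p})$, iterates it into a geometric series, and passes to the limit. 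Your route is shorter and more conceptual --- it makes structurally transparent why the theorem generalizes HHP's result for vertex-transitive graphs, and it yields the slightly stronger intermediate inequality $\mathscr{I}(G)\le \mathscr{I}(G^{\Box\ell}\Box K_k)$ --- at the price of invoking the nontrivial homomorphism theorem as a black box, whereas the paper's recurrence is elementary and self-contained. Both arguments share the limitation the authors explicitly flag: neither shows $i(G^{\Box q})\le \alpha(G^{\Box\ell}\Box K_k)/(k\,n^{\ell})$ for any finite $q$, only the inequality in the limit. For the second claim, your argument is essentially the paper's: both use $\alpha(G^{\Box\ell}\Box K_k)\le k\,\alpha(G^{\Box\ell})$, i.e.\ $i(G^{\Box\ell}\Box K_k)\le i(G^{\Box\ell})$, and squeeze between $\mathscr{I}(G)$ and $i(G^{\Box\ell})$.
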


Applying Theorem~\ref{thm:upper_bound_triangle} to
$W_{2t+1}$
with $k=3$ gives the following corollary.
Since we are focused on odd wheels in this paper, this corollary is the principle application of Theorem~\ref{thm:upper_bound_triangle} we will use.

\begin{corollary}
Let $t\ge 2$ and $\ell \ge 1$ be fixed integers. Then  
\[ \mathscr{I}(W_{2t+1}) \le \frac{\alpha(W_{2t+1}^{\Box \ell}\Box K_3)}{3(2t+2)^{\ell}}.\]
\end{corollary}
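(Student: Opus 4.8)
The plan is to apply Theorem~\ref{thm:upper_bound_triangle} directly with $G = W_{2t+1}$ and $k = 3$, so that the entire task reduces to verifying the two hypotheses of that theorem for this particular choice of $G$ and $k$, and then reading off the resulting inequality.

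First I would confirm that $W_{2t+1}$ contains a clique of order $3$, as required by the theorem. Since $W_{2t+1}$ is the join of the cycle $C_{2t+1}$ with a single hub vertex, any edge $uv$ of the cycle together with the hub $w$ induces a triangle on $\{u,v,w\}$. Hence $\omega(W_{2t+1}) \ge 3$ and the choice $k = 3$ is admissible. (In fact $\omega(W_{2t+1}) = 3$, as already noted in the introduction.)

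Second I would record the order of the graph: the cycle $C_{2t+1}$ contributes $2t+1$ vertices and the hub contributes one more, so that $|V(W_{2t+1})| = 2t+2$. Substituting $|V(G)| = 2t+2$ and $k = 3$ into the first inequality of Theorem~\ref{thm:upper_bound_triangle} then gives
\[ \mathscr{I}(W_{2t+1}) \le \frac{\alpha(W_{2t+1}^{\Box \ell}\Box K_3)}{3(2t+2)^{\ell}}, \]
which is precisely the claimed bound.

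There is essentially no analytic obstacle here: all of the substance lives in Theorem~\ref{thm:upper_bound_triangle}, and the corollary is a pure specialization of it. The only points requiring any attention are the two elementary structural facts above—that $W_{2t+1}$ contains a triangle and that it has exactly $2t+2$ vertices—both of which follow immediately from the description of the wheel as a join of a cycle with a single vertex.
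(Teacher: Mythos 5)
Your proof is correct and matches the paper exactly: the corollary is stated there as an immediate specialization of Theorem~\ref{thm:upper_bound_triangle} with $G = W_{2t+1}$ and $k=3$, and the only facts needed are the two you verify, namely that $W_{2t+1}$ contains a triangle (hub plus a cycle edge) and that $|V(W_{2t+1})| = 2t+2$. Nothing is missing.
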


We first use Theorem~\ref{thm:upper_bound_triangle} to obtain a non-trivial upper bound on $\mathscr{I}(W_{2t+1})$ for any $t$ without the need for computer assistance. Observe that for any graph $G$, $\alpha(G\Box K_k) \leq |V(G)|$
with equality if and only if $G$ is $k$-colourable. Since each odd-wheel is a $4$-critical graph, the $\ell=1$ case of Theorem~\ref{thm:upper_bound_triangle} immediately implies the following: for all $t\geq 2$,

\begin{equation}\label{eq:all_odd_l_1}
    \mathscr{I}(W_{2t+1}) \leq \frac{2t+1}{6t+6}.
\end{equation}

It is easily seen that the upper bound in \eqref{eq:all_odd_l_1} is already better than the bound in \eqref{eq:higher_wheels_known_bound} for any $t \ge 2$.
We improve this even further for all $t\ge 3$ as follows.
Assuming $t\ge 3$ here is not significant, since we bound $\mathscr{I}(W_5)$ by different methods later.

\begin{theorem}\label{thm:higher_wheel_best_bound}
    For all $t\ge 3$, we have
    \[ \mathscr{I}(W_{2t+1}) \le \frac{4t^2 + 6t}{3(2t+2)^2}.\]
\end{theorem}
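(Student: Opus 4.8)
The plan is to invoke Theorem~\ref{thm:upper_bound_triangle} with $k=3$ and $\ell=2$, which reduces the claim to the purely combinatorial estimate
\[ \alpha\!\left(W_{2t+1}^{\Box 2}\Box K_3\right)\le 4t^2+6t, \]
since then $\mathscr{I}(W_{2t+1})\le \frac{\alpha(W_{2t+1}^{\Box 2}\Box K_3)}{3(2t+2)^2}\le \frac{4t^2+6t}{3(2t+2)^2}$. The first thing I would record is the standard reinterpretation of $\alpha(H\Box K_3)$: an independent set of $H\Box K_3$ meets each $K_3$-fibre in at most one vertex, so it is exactly a partial proper $3$-colouring of $H$, and hence $\alpha(H\Box K_3)$ equals the maximum number of vertices of $H$ that induce a $3$-colourable subgraph. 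Writing $H=W_{2t+1}^{\Box 2}$ and $n=2t+2=|V(W_{2t+1})|$, the target $4t^2+6t=n^2-(2t+4)$ is therefore equivalent to the statement that at least $2t+4$ vertices must be deleted from $W_{2t+1}^{\Box 2}$ to make it $3$-colourable.

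Next I would set up the fibre bound that already gets most of the way. The graph $W_{2t+1}$ is $4$-chromatic and $4$-vertex-critical (deleting the hub leaves $C_{2t+1}$, while deleting a rim vertex leaves a fan), so $\alpha(W_{2t+1}\Box K_3)=|V(W_{2t+1})|-1=2t+1$. Projecting $W_{2t+1}^{\Box 2}\Box K_3=W_{2t+1}\Box(W_{2t+1}\Box K_3)$ onto the first factor exhibits $n=2t+2$ fibres, each isomorphic to $W_{2t+1}\Box K_3$, and an independent set meets each fibre in an independent set; hence
\[ \alpha\!\left(W_{2t+1}^{\Box 2}\Box K_3\right)\le (2t+2)(2t+1)=4t^2+6t+2. \]
Equivalently, in the deletion picture every one of the $n$ row-copies of $W_{2t+1}$ (and, symmetrically, every one of the $n$ column-copies) must lose at least one vertex, so at least $2t+2$ deletions are forced, and a permutation pattern attains this cover. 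The whole game is therefore to improve the constant $2$ to $0$, that is, to extract two extra deletions.

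The crux, and the step I expect to be the main obstacle, is showing that merely covering all rows and columns by deletions is not enough: the colour classes must also be globally consistent, and this consistency costs two deletions beyond the permutation minimum. Here I would argue by contradiction, assuming a deletion set $D$ with $|D|\le 2t+3$ whose complement is properly $3$-coloured. Since $D$ must meet all $n$ rows and all $n$ columns, such a $D$ is a permutation pattern plus at most one extra cell, so all but at most one row and column lose exactly one vertex. I would then split on the hub--hub vertex $(h,h)$. If $(h,h)$ is present, fix its colour to be $1$; then the hub row and the hub column are copies of the odd cycle $C_{2t+1}$ all of whose present vertices are adjacent to $(h,h)$ and hence avoid colour $1$, so each must be $2$-coloured, each needs a deletion, and the pair of colours available on each line is pinned down. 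Propagating these $2$-colourings into the torus block $C_{2t+1}\Box C_{2t+1}$, any present cell lying in a row whose rim uses $\{1,3\}$ and a column whose rim uses $\{1,2\}$ is adjacent to a colour-$2$ hub and a colour-$3$ hub, so it is forced to colour $1$; the independence of the colour-$1$ class then clashes with the cyclic adjacencies around the odd torus unless further cells are deleted, contradicting $|D|\le 2t+3$. The case $(h,h)\in D$ is handled by an analogous but separate propagation.

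Carrying out this propagation carefully — in effect ruling out every near-minimal (permutation-type) deletion pattern at once, using the parity of the odd cycles in both coordinate directions — is the technical heart of the argument; the covering bound and the fibre estimate above are routine by comparison. Combining the improved estimate $\alpha(W_{2t+1}^{\Box 2}\Box K_3)\le 4t^2+6t$ with the $k=3$, $\ell=2$ case of Theorem~\ref{thm:upper_bound_triangle} then yields the claimed bound for all $t\ge 3$.
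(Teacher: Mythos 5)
Your reduction is the same as the paper's: apply Theorem~\ref{thm:upper_bound_triangle} with $k=3$, $\ell=2$ and prove $\alpha(W_{2t+1}^{\Box 2}\Box K_3)\le 4t^2+6t$, and your fibre bound $\alpha(W_{2t+1}^{\Box 2}\Box K_3)\le (2t+2)(2t+1)=4t^2+6t+2$ via $\alpha(W_{2t+1}\Box K_3)=2t+1$ is correct. But the entire content of the theorem is the improvement from $+2$ to $+0$, and there your proposal has a genuine gap: the propagation argument is only sketched, and the one concrete mechanism you describe does not produce a contradiction. After one deletion, a proper $2$-colouring of the odd cycle of column hubs (respectively row hubs) has no two cyclically consecutive present vertices of the same colour; consequently the rows with hub colour $2$ and the columns with hub colour $3$ each form a set with no two consecutive indices, so your ``forced colour-$1$'' cells at their intersections are pairwise non-adjacent in the torus $C_{2t+1}\Box C_{2t+1}$. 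No clash ``around the odd torus'' materialises from this forcing alone, and the case $(h,h)\in D$ is waved at rather than argued. You acknowledge this step is the technical heart, but a plan for it is not a proof of it.

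There is also a decisive sanity check showing the sketch cannot be completed as written: nothing in your argument uses $t\ge 3$, yet the inequality $\alpha(W_{2t+1}^{\Box 2}\Box K_3)\le 4t^2+6t$ is \emph{false} for $t=2$, since $\alpha(W_5^{\Box 2}\Box K_3)=29>28$ (Lemma~\ref{lemma:5_wheel_small_indendence_no} and Table~\ref{table:smallwheel}). So for $t=2$ there is a deletion set of size exactly $2t+3=7$ whose complement is $3$-colourable, meaning your contradiction from $|D|\le 2t+3$ must fail there; any correct argument has to break at $t=2$, and yours does not distinguish it. The paper's proof avoids the colouring/deletion picture entirely: it slices an independent set $S$ into the three layers $S_0,S_1,S_2\cong$ independent sets of $W_{2t+1}^{\Box 2}$, applies Lemma~\ref{lemma:ind_set_higher_power_large_center} (the trade-off $|I|\le t(2t+1)+1+(1-t)k$ against the size $k$ of the central fibre) to bound the central contributions $k_i$ in terms of the $|S_i|$, bounds the remaining $2t+1$ cross-sections by $\alpha(W_{2t+1}\Box K_3)=2t+1$ each, and solves the resulting inequality to get $|S|\le 4t^2+6t+\frac{2}{t}$; integrality together with $t\ge 3$ then gives $|S|\le 4t^2+6t$, and the slack term $\frac{2}{t}$ is precisely why $t=2$ escapes. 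To salvage your route you would need a quantitative replacement for the propagation step that visibly degenerates at $t=2$; as it stands, the proposal establishes only the weaker bound $4t^2+6t+2$.
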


To prove Theorem \ref{thm:higher_wheel_best_bound}, we first investigate independent sets in $W_{2t+1}^{\Box 2}$ (Lemma \ref{lemma:ind_set_higher_power_large_center}), which also leads to a proof of the fact that $\alpha(W_{2t+1}^{\Box 2}) = t(2t+1) + 1$ (Proposition \ref{prop:higher_wheel_second_power}). We then estimate $\alpha(W_{2t+1}^{\Box 2}\Box K_3)$ and use Theorem \ref{thm:upper_bound_triangle} to finish the proof of Theorem \ref{thm:higher_wheel_best_bound}.  

We believe that examining independent sets in $W_{2t+1}^{\Box 2}\Box K_3$ can lead to further insights. So for $2 \leq t \leq 5$,
we include a figure that shows a maximum independent set in $W_{2t+1}^{\Box 2} \Box K_3$. These figures can be found in Appendix~A.

The rest of our work focuses on small odd wheels, with a particular focus on $W_5$. We believe $W_5$ is the most interesting open case of Conjecture \ref{Conj: Odd Wheels}, and the resolution of this case can potentially lead to a proof of the full conjecture. One possibility to improve the bound in \eqref{eq:Wheel_5_Hahn_bound} for $\mathscr{I}(W_5)$ is by computing $\alpha(W^{\Box4}_5)$, but this was not tractable using our machines. 
Instead, we exactly determine the independence number of $W_{5}^{\Box 3} \Box K_3$, which was previously unknown.

\begin{theorem}\label{thm:5_wheel_thirdpower_triangle}
    $\alpha(W_{5}^{\Box 3}\Box K_3) = 170$.
\end{theorem}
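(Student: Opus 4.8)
The plan is to reduce the problem to a colouring-type extremal question and then treat the two bounds separately. First I would use the standard correspondence between independent sets of a Cartesian product with $K_k$ and systems of disjoint independent sets: a set $S \subseteq V(W_5^{\Box 3} \Box K_3)$ is independent if and only if, reading the $K_3$-coordinate as a ``colour'' in $\{1,2,3\}$, each vertex of $W_5^{\Box 3}$ receives at most one colour and the vertices of a fixed colour form an independent set of $W_5^{\Box 3}$. Hence
\[
\alpha(W_5^{\Box 3} \Box K_3) = \max\{\,|I_1| + |I_2| + |I_3|\,\},
\]
the maximum being over triples of pairwise disjoint independent sets $I_1, I_2, I_3$ of $W_5^{\Box 3}$; equivalently, it is the largest order of an induced $3$-colourable subgraph of $W_5^{\Box 3}$, so the complementary quantity $216 - \alpha(W_5^{\Box 3}\Box K_3)$ is the minimum number of vertices whose deletion makes $W_5^{\Box 3}$ properly $3$-colourable. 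The assertion $\alpha = 170$ is thus equivalent to saying this deletion number equals $46$.

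For the lower bound I would exhibit three pairwise disjoint independent sets of $W_5^{\Box 3}$ covering $170$ vertices, that is, a proper $3$-colouring of an induced subgraph on $170$ vertices. Viewing $W_5^{\Box 3} = W_5^{\Box 2} \Box W_5$ as six layers, each a copy of $W_5^{\Box 2}$, I would seed the construction from an optimal partial $3$-colouring of $W_5^{\Box 2}$ (equivalently a maximum independent set of $W_5^{\Box 2} \Box K_3$, as depicted in Appendix~A), propagate it across the six layers with colour shifts chosen to respect the inter-layer edges dictated by the rim cycle and the hub of $W_5$, and then locally repair the residual conflicts. Once written down, verifying that the resulting $170$ vertices are properly $3$-coloured is a routine, human-checkable computation.

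The upper bound is the hard part, and axis-aligned counting only gets close. In each of the three $W_5$-directions the $36$ fibres of $W_5^{\Box 3}$ are vertex-disjoint copies of $W_5$, and since $\chi(W_5) = 4$ every one of these $108$ copies must lose at least one vertex; as a single deletion lies in exactly one fibre per direction it destroys at most three of the $108$ copies, forcing at least $36$ deletions and hence only $\alpha \le 180$. Pushing $36$ up to the true value $46$ requires the additional forced deletions coming from the many copies of $W_5$ (and other $4$-chromatic subgraphs) that are not axis-aligned, together with the global consistency of a single $3$-colouring across all fibres at once, and these interactions seem intractable to control by hand. I would therefore settle the upper bound by an exact maximum-independent-set / integer-programming computation on the $648$-vertex graph $W_5^{\Box 3} \Box K_3$, drastically pruning the search with its automorphism group (of order up to $10^3 \cdot 6 \cdot 6$, coming from the dihedral symmetry $D_5$ of each rim, the $S_3$ permuting the three isomorphic $W_5$-factors, and the $S_3$ permuting the three colours) to canonicalise partial colourings.

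The main obstacle is exactly this final step: the graph sits at the edge of what an exact solver can certify optimal, so the real work lies in organising the symmetry reduction and producing a branch-and-bound or LP-dual certificate tight enough to rule out $171$, since the fibre bound alone leaves a gap of ten deletions that only the global structure can close.
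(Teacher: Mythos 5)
Your two-bound strategy coincides with the paper's in spirit---an explicit $170$-vertex witness below, computer certification above, and the same translation of $\alpha(W_5^{\Box 3}\Box K_3)$ into the largest induced $3$-colourable subgraph of $W_5^{\Box 3}$---but your execution of the upper bound is genuinely different from the paper's, and the difference is exactly where the difficulty lives. You propose one monolithic exact solve of the $648$-vertex instance, pruned by the automorphism group of order up to $10^3\cdot 6\cdot 6$ (your fibre-counting warm-up giving $\alpha\le 180$ is correct, though the paper does not use it). The paper never solves that instance to optimality. Instead it runs a hand-guided branch and bound at the level of layer cardinalities: writing $\mathbf{s}=(|S_0|,|S_1|,|S_2|)$ for the three $W_5^{\Box 3}$-layers, a $171$-vertex independent set forces $\mathbf{s}\in\{(58,58,55),(58,57,56),(57,57,57)\}$ since $\alpha(W_5^{\Box 3})=58$ (Lemma~\ref{lemma:5_wheel_small_indendence_no}); it then classifies, via small constrained infeasibility checks (constraints $\mathbf{1}^Tx_v=k$ added to the product formulation of Section~5), the possible profiles $(|I_*|,|I_0|,\dots,|I_4|)$ of $58$- and $57$-element independent sets in $W_5^{\Box 3}$ (Lemmas~\ref{lemma:third_power_center_independence} and~\ref{lemma:57 center 9}), exploiting along the way the structural fact from Lemma~\ref{lemma:ind_set_higher_power_large_center} and Proposition~\ref{prop:higher_wheel_second_power} that any two $11$-element independent sets in $W_5^{\Box 2}$ intersect, and finally rules out each surviving $\mathbf{s}$ (Lemma~\ref{lemma:thirdpower_wit5h_K3_infeasible_sets}). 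Every leaf of this tree is a heavily constrained \emph{infeasible} program, which solvers certify far faster than optimality of the full problem. What your route would buy is brevity; what the paper's buys is tractability plus reusable structure---the same profile lemmas later drive $\alpha(W_5^{\Box 4})\le 343$ and $\alpha(W_5^{\Box 4}\Box K_3)\le 1019$, which a black-box optimum for the $648$-vertex graph would not provide.

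Two caveats you should address before calling your version a proof. First, the monolithic solve is precisely the computation the authors' decomposition appears designed to avoid: Cartesian-product IPs have weak LP relaxations, and a symmetry group of order $36000$ may not tame the branch-and-bound tree, so you need either a completed run with a certificate or a fallback decomposition of the profile type. Second, your lower bound is currently a plan (``propagate colour shifts and locally repair''), not a witness; it must terminate in an explicit $170$-vertex set whose independence is checkable, as the paper supplies in Figure~\ref{fig:independent_set_170}.
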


Our proof of Theorem \ref{thm:5_wheel_thirdpower_triangle} is computer-assisted. We first identify a maximum independent set of size 170 (see Figure~\ref{fig:independent_set_170}), which gives the lower bound. We then use structural insights to identify different possibilities for a large independent set in $W_5^{\Box 3}$ and then use integer programming, in combination with structural arguments, to show that there are no independent sets of size more than 170. 

An immediate corollary of Theorem~\ref{thm:upper_bound_triangle} and Theorem~\ref{thm:5_wheel_thirdpower_triangle}
is the following:
\begin{equation}
    \mathscr{I}(W_5) \le \frac{85}{324},
\end{equation}
which is already better than \eqref{eq:Wheel_5_Hahn_bound}. We take further steps to improve this bound.
Using Theorem \ref{thm:5_wheel_thirdpower_triangle} we show that $\alpha(W^{\Box 4}_5) \le 343$ (Theorem \ref{thm:5_wheel_fourth_power}). Additionally, we estimate $\alpha(W_5^{\Box 4}\Box K_3)$ (Theorem~\ref{thm:5_wheel_fourthpower_triangle}). Using Theorem~\ref{thm:upper_bound_triangle}, this estimate implies the following. 

\begin{theorem}\label{thm:5_wheel_best_bound} $\mathscr{I}(W_5) \le \frac{1019}{3888}$.
\end{theorem}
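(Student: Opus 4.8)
The plan is to derive the bound as the $k=3$, $\ell=4$ instance of Theorem~\ref{thm:upper_bound_triangle}. Since $W_5$ contains a triangle and $|V(W_5)|=6$, that theorem gives
\[
\mathscr{I}(W_5)\le \frac{\alpha(W_5^{\Box 4}\Box K_3)}{3\cdot 6^{4}}=\frac{\alpha(W_5^{\Box 4}\Box K_3)}{3888},
\]
so the entire problem reduces to establishing the estimate $\alpha(W_5^{\Box 4}\Box K_3)\le 1019$; this is exactly the content of Theorem~\ref{thm:5_wheel_fourthpower_triangle}, and the remaining paragraphs describe how I would prove it.

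To bound $\alpha(W_5^{\Box 4}\Box K_3)$, I would fix one copy of $W_5$ as a factor and write $W_5^{\Box 4}\Box K_3 = W_5\,\Box\, H$ with $H=W_5^{\Box 3}\Box K_3$. Projecting onto this $W_5$ slices any independent set $S$ into six fibers $S_c,S_{v_1},\dots,S_{v_5}\subseteq V(H)$, each independent in $H$, subject to the rule that fibers whose base vertices are adjacent in $W_5$ must be disjoint. By Theorem~\ref{thm:5_wheel_thirdpower_triangle} each fiber has size at most $\alpha(H)=170$, giving the crude bound $|S|\le 6\cdot 170 = 1020$. The companion decomposition over the $K_3$ factor expresses $S$ as three pairwise disjoint independent sets of $W_5^{\Box 4}$ and, using $\alpha(W_5^{\Box 4})\le 343$, yields $|S|\le 1029$; neither estimate alone reaches $1019$, so a refinement that removes at least one vertex from the fiber bound is needed.

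The refinement I would pursue records, for each base element $x\in V(H)$, the set $T_x=\{w\in V(W_5): x\in S_w\}$ of fibers containing it; the disjointness rule says precisely that $T_x$ is independent in $W_5$. Since the center $c$ is adjacent to every cycle vertex, $c\in T_x$ forces $T_x=\{c\}$, so every element of the center fiber lies in no other fiber, while every other element lies in at most two (non-consecutive) cycle fibers. Pushing $|S|=\sum_w|S_w|$ to $1020$ therefore requires $|S_c|=170$ and $|S_{v_i}|=170$ for all $i$ simultaneously; that is, all six fibers must be \emph{maximum} independent sets of $H$ packed according to the conflict pattern $W_5$, with the five cycle fibers confined to the $648-170=478$ elements outside $S_c$. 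The heart of the argument is to show this configuration is impossible, forcing $|S|\le 1019$.

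I expect this incompatibility step to be the main obstacle, and I anticipate resolving it by the same structural-plus-integer-programming method used to prove Theorem~\ref{thm:5_wheel_thirdpower_triangle}: first classify (or sufficiently constrain) the maximum independent sets of $W_5^{\Box 3}\Box K_3$, then show that no choice of a size-$170$ center fiber admits five pairwise-consecutively-disjoint size-$170$ cycle fibers inside its complement. The fiber reformulation above restricts the search dramatically---one only needs to certify a deficit of a single vertex---so the residual optimization should be within reach of the integer-programming framework already developed, and once $\alpha(W_5^{\Box 4}\Box K_3)\le 1019$ is certified the bound $\mathscr{I}(W_5)\le \tfrac{1019}{3888}$ follows immediately from the displayed inequality.
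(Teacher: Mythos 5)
Your reduction is exactly the paper's: Theorem~\ref{thm:upper_bound_triangle} with $k=3$, $\ell=4$ turns the claim into $\alpha(W_5^{\Box 4}\Box K_3)\le 1019$, which is Theorem~\ref{thm:5_wheel_fourthpower_triangle}, and your fibre decomposition over one $W_5$ factor, with $H=W_5^{\Box 3}\Box K_3$ and the forcing ``$|S|=1020$ implies all six fibres are size-$170$ maximum independent sets of $H$,'' is precisely how the paper's proof of that theorem begins (there the fibres appear as the slices $\bigcup_{i\in\mathbb{Z}_3}S_{i,j}$ for $j\in\{*\}\cup\mathbb{Z}_5$, each forced to sum to $170$). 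Up to that point everything you write is correct, including the trace analysis ($T_x$ independent in $W_5$, so the centre fibre is disjoint from all others) and the companion bound $3\cdot 343=1029$ from Theorem~\ref{thm:5_wheel_fourth_power}.

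The genuine gap is that you never prove the step you yourself call ``the heart of the argument'': the impossibility of six pairwise-compatible size-$170$ fibres. You defer it to a future classification of the maximum independent sets of $H$ plus new integer programs --- a research plan, not a proof, and an uncertified and needlessly heavy one: $H$ has $648$ vertices and potentially very many maximum independent sets, and nothing in your sketch bounds that enumeration or the subsequent packing search. The paper closes the gap with \emph{no new computation}, using only profile information it has already established: Lemma~\ref{lemma:large_sets_third_power_triangle} (proved via Lemmas~\ref{lemma:third_power_center_independence}--\ref{lemma:thirdpower_wit5h_K3_infeasible_sets}) says every $170$-set in $H$ has $K_3$-profile $(58,58,54)$ or $(57,57,56)$, so two of its three $K_3$-slices have size at least $57$. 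Applied at $j=*$ (with $|S_{0,*}|\ge|S_{1,*}|\ge|S_{2,*}|$) this gives $|S_{0,*}|,|S_{1,*}|\ge 57$; since $\{w_*,w_j,w_{j+1}\}$ is a triangle in $W_5$, the set $S_{i,*}\cup S_{i,j}\cup S_{i,j+1}$ is independent in $H$, so for $i\in\{0,1\}$ no two cyclically consecutive cycle indices $j$ can both have $|S_{i,j}|\ge 57$; hence at least three indices per such $i$ satisfy $|S_{i,j}|\le 56$, and the Pigeonhole Principle yields a common $j_0$ at which two of the three slices are at most $56$, contradicting the forced profile at $j_0$. In short, you had all the needed ingredients available (the profile lemma sits next to the theorem you invoke) but missed that profiles alone, rather than a full structural classification of maximum independent sets, suffice to finish.
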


For other small odd wheels, we obtain improvements on Theorem~\ref{thm:higher_wheel_best_bound}, purely using computer assistance. Our results are summarized in Table~\ref{table:smallwheel}. All of the reported computations were conducted on our personal laptop computers.

\renewcommand{\arraystretch}{1.5}
\begin{table}[!h]
\centering
\begin{tabular}{| c | c | c | c | c |} 
\hline \hline
$2t+1$ & $\alpha(W_{2t+1}^{\Box 3})$ & $\chi_f(W_{2t+1}^{\Box 2})$ &  $\alpha(W^{\Box 2}_{2t+1} \Box K_3)$ & $\mathscr{I}(W_{2t+1})\leq $  \\ \hline \hline 
$5$ & $58$ & $41/11$ \cite{hahn1995ultimate} & $29$ & $\frac{1019}{3888}$ \\ \hline
$7$ & $156$ & $39/11$ & $54$ & $9/32$ \\ \hline
$9$ & $336$ & $127/37$ & $87$ & $29/100$ \\ \hline
$11$ & $620$ & $47/14$ & $128$ & $8/27$ \\ \hline
$13$ & $1032$ & $261/79$ & $177$ & $59/196$ \\ \hline
$15$ & ? & $173/53$ & $234$ & $39/128$ \\ \hline \hline
\end{tabular}
\caption{Best known computational results. See our code  \href{https://github.com/Shivaramkratos/Ultimate_Independence_ratio_Python_Gurobi_code}{here}. The upper bound for $\mathscr{I}(W_{2t+1})$ is obtained by using the values for $\alpha(W^{\Box 2}_{2t+1}\Box K_3)$ and Theorem~\ref{thm:upper_bound_triangle}. Fractional chromatic numbers are computed using methods described in Section~5.
}
\label{table:smallwheel}
\end{table}

\subsection{Organization of the Paper}

This paper is organized as follows. 
In Section~\ref{sec:Notation}, we introduce notation and results from the literature that will be useful later in the paper.
Our contributions begin in Section~3 and conclude in Section~6.
This part of the paper can be divided into two parts, 
Sections~3 and 4, which deal with purely theoretical results,
and Section~5 and 6, which involve computer assistance.
Section~3 is used to prove Theorem~\ref{thm:upper_bound_triangle} while 
Section~4 studies the structure of $W^{\Box 2}_{2t+1}$ with the goal of proving Theorem~\ref{thm:higher_wheel_best_bound}.
In Section~5 we outline our computational methods and how we will rigorously
present our computer-assisted proofs.
Finally, Section~6 is spent applying our theoretical techniques, together with computer assistance,
to obtain improved bounds on the ultimate independence ratio of $W_5$.
We conclude with a section discussing future work, and an appendix is included, which displays some helpful figures for understanding maximum independent sets in $W^{\Box2}_{2t+1}\Box K_3$ when $t$ is small.

\section{Notation and Preliminaries}\label{sec:Notation}

We first set up the notation that will be used throughout this paper. For a fixed $t\ge 2$, let 
\[V(W_{2t+1}) = \{w_i: i \in \mathbb{Z}_{2t+1}\}\cup \{w_*\}\] where $w_*$ denotes the dominating vertex in $W_{2t+1}$. Here $\mathbb{Z}_{2t+1}$ denotes the cyclic group of order $2t+1$. This is helpful in the indexing of the vertices. Then
\[ V(W_{2t+1}^{\Box k}) = \{(a_1, \ldots, a_k): a_i \in V(W_{2t+1})\}.\]
We define $H_{p}^{(k)}$ to be the subgraph of $W_{2t+1}^{\Box k}$ induced by the vertices 
\[ \{(a_1, \ldots, a_{k-1}, w_p): a_i\in V(W_{2t+1}), 1\le i\le k-1\}.\]
Clearly, $H_p^{(k)}\cong W_{2t+1}^{\Box k-1}$. 
We define $H_{p,q}^{(k)}$ to be the subgraph of $W_{2t+1}^{\Box k}$ induced by the vertices 
\[\{(a_1, \ldots, a_{k-2},w_q, w_p): a_i\in V(W_{2t+1}), 1\le i\le k-2\}.\]
Note that $H_{p,q}^{(k)}\cong W_{2t+1}^{\Box k-2}$. Also, $V(H_{p,q}^{(k)})\subset V(H_p^{(k)})$. 

Next, we recall a known result that is implicitly used later.

\begin{proposition}
Let $G$ and $H$ be graphs. Then 
    \[ \alpha(G\Box H)\le \min\{|V(G)|\alpha(H) , |V(H)|\alpha(G)\}.\]
\end{proposition}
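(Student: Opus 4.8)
The plan is to exploit the layered structure of the Cartesian product. Recall that in $G \Box H$ two vertices $(u,x)$ and $(v,y)$ are adjacent only when they agree in one coordinate and their other coordinates form an edge in the corresponding factor. The crucial observation is that for each fixed $u \in V(G)$, the \emph{layer} $L_u := \{u\} \times V(H)$ induces a subgraph of $G \Box H$ isomorphic to $H$: within $L_u$ the first coordinates always agree, so $(u,x)(u,y)$ is an edge precisely when $xy \in E(H)$. Symmetrically, for each fixed $x \in V(H)$ the layer $M_x := V(G) \times \{x\}$ induces a copy of $G$.

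First I would take an arbitrary independent set $S \subseteq V(G \Box H)$ and bound $|S|$ via the layers $L_u$. Since $L_u$ induces a copy of $H$, the restriction $S \cap L_u$ is an independent set of that copy of $H$, and hence $|S \cap L_u| \le \alpha(H)$. The layers $\{L_u : u \in V(G)\}$ partition $V(G \Box H)$, so summing over all $u$ gives
\[
|S| = \sum_{u \in V(G)} |S \cap L_u| \le \sum_{u \in V(G)} \alpha(H) = |V(G)|\,\alpha(H).
\]

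Next I would repeat the identical argument using the layers $M_x$ instead. Because each $M_x$ induces a copy of $G$, we get $|S \cap M_x| \le \alpha(G)$, and summing over the partition $\{M_x : x \in V(H)\}$ yields $|S| \le |V(H)|\,\alpha(G)$. Taking $S$ to be a maximum independent set and combining the two inequalities gives $\alpha(G \Box H) \le \min\{|V(G)|\,\alpha(H),\, |V(H)|\,\alpha(G)\}$, as required.

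This result is elementary, so there is no genuine obstacle; the only point requiring care is verifying that each layer induces an isomorphic copy of the relevant factor, which follows directly from the edge definition of the Cartesian product. The argument is essentially a double counting of an independent set along the two natural partitions of $V(G \Box H)$ into layers.
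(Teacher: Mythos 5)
Your proof is correct. The paper states this proposition without proof, merely recalling it as a known result, and your layer-partition argument --- bounding $|S \cap L_u| \le \alpha(H)$ over the partition into $H$-layers, and symmetrically for the $G$-layers --- is exactly the standard argument the paper implicitly relies on; nothing is missing.
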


\section{A General Bound for Ultimate Independence}

In this short section, we provide a proof of 
Theorem~\ref{thm:upper_bound_triangle}. 
Our argument proceeds by using counting techniques to obtain a recurrence relation that upper bounds 
$i(G^{\Box \ell+1})$.
From here we use the limit in the definition of $\mathscr{I}(G)$,
together with this recurrence relation,
to obtain the desired bound.
We note that our argument does not imply that 
the bound in 
Theorem~\ref{thm:upper_bound_triangle}
is true when $i(G^{\Box q})$ replaces $\mathscr{I}(G)$,
for any choice of $q$.
We simply guarantee the inequality in the limit.

\begin{proof}[Proof of Theorem~\ref{thm:upper_bound_triangle}]
Suppose that $G$ has $n$ vertices.
Let $I$ be a maximum independent set in $G^{\Box p+1}$. 
By assumption, there exists a set clique $K$ of order $k$ in $G$.
For each $v \in V(G)$, define $I_v = I\cap \{(u_1,\dots, u_p, v): (u_1,\dots, u_{p}) \in V(G^{\Box p})\}$.
Then
\begin{align*}
    |I| & = \sum_{v \in V(G)} I_v \\
    & = \left( \sum_{v \in V(K)} I_{v} \right) + \left( \sum_{u \notin V(K)} I_u\right) \\
    & \le \alpha(G^{\Box p} \Box K_k) + \left(n - k\right)\alpha(G^{\Box p}).
\end{align*}
Thus,
\begin{align*}
    \alpha(G^{\Box p+1}) & \le \alpha(G^{\Box p} \Box K_k) + \left(n - k\right)\alpha(G^{\Box p}) \\
    & \leq n^{p-\ell}\alpha(G^{\Box \ell} \Box K_k) + \left(n - k\right)\alpha(G^{\Box p}) 
\end{align*}
Using the above recurrence, we see that 
\begin{align*}
    i(G^{\Box p+1}) & =  \frac{\alpha(G^{\Box p+1})}{n^{p+1}} \\
    & \le \frac{\alpha(G^{\Box \ell} \Box K_k)}{n^{\ell+1}} \left( \sum_{i=0}^{p-\ell} \left(\frac{n-k}{n} \right)^i \right) + \frac{(n-k)^{p-\ell + 1}\alpha(G^{\Box \ell})}{n^{p+1}}
\end{align*}
Recall that $\ell$ is a constant.
By applying the definition of $\mathscr{I}(G)$ and taking limits we get
\begin{align*}\label{eq:upper_bound_with_triangle}
    \mathscr{I}(G) & = \lim_{p\rightarrow \infty} i(G^{\Box p}) 
    \\
    & = \lim_{p\rightarrow \infty} i(G^{\Box p+1}) 
    \\
    & \le \lim_{p\rightarrow \infty} \left(\frac{\alpha(G^{\Box \ell} \Box K_k)}{n^{\ell+1}} \left( \sum_{i=0}^{p-\ell} \left(\frac{n-k}{n} \right)^i \right) + \frac{(n-k)^{p-\ell + 1}\alpha(G^{\Box \ell})}{n^{p+1}} \right)
    \\
    & = \frac{\alpha(G^{\Box \ell} \Box K_k)}{n^{\ell+1}} \left( \lim_{p\rightarrow \infty} \left(  \sum_{i=0}^{p-\ell} \left(\frac{n-k}{n} \right)^i \right) \right)  + \lim_{p\rightarrow \infty}\Bigg(
    \frac{(n-k)^{p-\ell + 1}\alpha(G^{\Box \ell})}{n^{p+1}}
    \Bigg)
    \\
    & =  \frac{\alpha(G^{\Box \ell} \Box K_k)}{n^{\ell+1}}\left( \frac{n}{k} \right) + 0
    \\
    & = \frac{\alpha(G^{\Box \ell} \Box K_k)}{kn^{\ell}}.
\end{align*}
We conclude that $\mathscr{I}(G) \leq \frac{\alpha(G^{\Box \ell} \Box K_k)}{kn^{\ell}}$ as desired.

Now we turn our attention to proving 
\[
\mathscr{I}(G) = \lim_{\ell\rightarrow \infty} \frac{\alpha(G^{\Box \ell} \Box K_k)}{kn^{\ell}}.
\]
Let $L$ denote the value of the limit on the right-hand side.
For all $\ell\geq 1$, 
the first part of the proof implies
$\mathscr{I}(G) \leq \frac{\alpha(G^{\Box \ell} \Box K_k)}{kn^{\ell}}$.
Hence, $L \ge \mathscr{I}(G)$.
It remains to be shown that $L \le \mathscr{I}(G)$.
To start,
observe that for all $\ell\ge 1$,
\begin{align*}
    \frac{\alpha(G^{\Box \ell} \Box K_k)}{kn^{\ell}}  \le \frac{k\alpha(G^{\Box \ell})}{kn^\ell}  = \frac{\alpha(G^{\Box \ell})}{n^\ell}  = i(G^{\Box \ell}).
\end{align*}
Hence, 
\begin{align*}
    \lim_{\ell\rightarrow \infty} \frac{\alpha(G^{\Box \ell} \Box K_k)}{kn^{\ell}}  \le \lim_{\ell \rightarrow\infty} i(G^{\Box \ell}) 
     = \mathscr{I}(G).
\end{align*}
This completes the proof.
\end{proof}

\section{On Large Odd Wheels}

This section is spent leveraging Theorem~\ref{thm:upper_bound_triangle} to obtain a non-trivial upper bound on $\mathscr{I}(W_{2t+1})$ for all $t$.
Specifically, we prove $\alpha(W^{\Box 2}_{2t+1} \Box K_3) \leq 4t^2 +6t+1$.
Before bounding the size independent sets in $W^{\Box 2}_{2t+1} \Box K_3$
we examine independent sets in $W^{\Box 2}_{2t+1}$.
We first note some easy facts.

\begin{proposition}\label{prop:higher_wheels_small_independence} For $t\ge 2$, we have 
    \begin{enumerate}[$(i)$]
        \item $\alpha(W_{2t+1}) = t.$
        \item $\alpha(W_{2t+1}\Box K_3) = 2t+1$. 
    \end{enumerate}
\end{proposition}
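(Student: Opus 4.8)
The plan is to treat the two parts separately, handling part $(i)$ as a quick warm-up and reserving the real work for the upper bound in part $(ii)$.

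For part $(i)$, I would argue directly from the join structure $W_{2t+1} = C_{2t+1} \lor K_1$. Any independent set $I$ in $W_{2t+1}$ either contains the hub $w_*$, in which case $I = \{w_*\}$ since $w_*$ dominates every cycle vertex, or else $I$ avoids $w_*$ and is therefore an independent set of the cycle $C_{2t+1}$. As $\alpha(C_{2t+1}) = \lfloor (2t+1)/2 \rfloor = t$ and $t \geq 2 > 1$, the maximum is attained on the cycle, giving $\alpha(W_{2t+1}) = t$.

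For the upper bound in part $(ii)$, the naive estimate from the preceding Proposition only yields $\alpha(W_{2t+1} \Box K_3) \le \min\{(2t+2)\cdot 1,\ 3t\} = 2t+2$ for $t \ge 2$, which is off by one. To sharpen this I would invoke the fact (recorded in the introduction) that $\alpha(G \Box K_k) \le |V(G)|$ with equality if and only if $G$ is $k$-colourable. Since $W_{2t+1}$ is an odd wheel it is $4$-chromatic, hence not $3$-colourable, so the inequality is strict and $\alpha(W_{2t+1} \Box K_3) \le |V(W_{2t+1})| - 1 = 2t+1$. For the matching lower bound I would exhibit an explicit independent set. Identifying $V(K_3)$ with colours $\{0,1,2\}$, and noting that any independent set of $W_{2t+1} \Box K_3$ meets each fibre $\{w\} \times V(K_3)$ at most once (as $K_3$ is complete), such independent sets correspond exactly to partial proper $3$-colourings of $W_{2t+1}$, with size equal to the number of coloured vertices. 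Leaving the hub uncoloured reduces the task to $3$-colouring the odd cycle $C_{2t+1}$: colour $w_0, \dots, w_{2t-1}$ alternately with $0,1$ and set $c(w_{2t}) = 2$. This properly colours all $2t+1$ cycle vertices, yielding the independent set $\{(w_i, c(w_i)) : i \in \mathbb{Z}_{2t+1}\}$ of size $2t+1$. Combining the two bounds gives $\alpha(W_{2t+1} \Box K_3) = 2t+1$.

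The only genuinely delicate point, and the step I would flag as the main obstacle, is the upper bound: the generic product inequality is one short, so the argument hinges on recognizing that the equality case of $\alpha(G \Box K_k) \le |V(G)|$ fails precisely because $W_{2t+1}$ is not $3$-colourable. Once that observation is in place, everything else is a routine construction.
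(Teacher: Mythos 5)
Your proof is correct and follows essentially the same route as the paper: the upper bound comes from $\chi(W_{2t+1})=4$ (equivalently, non-$3$-colourability forcing strict inequality in $\alpha(G\Box K_3)\le |V(G)|$), and the lower bound from an explicit independent set of size $2t+1$ viewed as a partial proper $3$-colouring. The only cosmetic difference is which vertex is left uncoloured — the paper colours the hub with the third colour and omits one cycle vertex, while you colour all $2t+1$ cycle vertices and omit the hub — but both constructions are equally immediate.
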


\begin{proof}
    Assertion $(i)$ is trivial. We give a proof for $(ii)$. Clearly,  
    given $\chi(W_{2t+1}) = 4$,
    \[ \alpha(W_{2t+1}\Box K_3)\le 2t+1.\]
    Now, assume that $V(K_3) = \{w_i: i\in  \mathbb{Z}_3\}$. Then
    \[
    \{(w_{2i+j},w_j): 0 \leq i\leq t-1, 0\leq j \leq 1\} \cup \{(*,2)\}
    \]
    is an independent set of size $2t+1$ in $W_{2t+1}\Box K_3$. 
    Hence, $\alpha(W^{\Box 2}_{2t+1})\geq 2t+1$ completing the proof.
\end{proof}

Next, we will determine $\alpha(W_{2t+1}^{\Box 2})$. We first prove a more general lemma about independent sets in $W_{2t+1}^{\Box 2}$. Let $I$ denote an independent set in $W_{2t+1}^2$, and for $i\in \mathbb{Z}_{2t+1}\cup \{*\}$ define 
\[ I_i = I \cap V(H_i^{(2)}) =\{(a,w_i)\in I: a \in V(W_{2t+1})\}.\]

\begin{lemma}\label{lemma:ind_set_higher_power_large_center}
Suppose $|I_*\backslash\{(w_*, w_*)\}| = k$. Then 
\[|I| \le t(2t+1) + 1 + (1-t)k.\]
\end{lemma}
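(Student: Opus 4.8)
The plan is to classify the vertices of $I$ by which of their two coordinates equal the hub $w_*$, and to show that every vertex having a hub coordinate annihilates an entire cycle-line of the remaining independent set. Abbreviate $N = 2t+1$ and $T := t(2t+1)+1+(1-t)k$ (the target). Partition $I$ into $A = I\cap\{(w_*,w_*)\}$, the given set $S = I_*\setminus\{(w_*,w_*)\}$ of size $k$, the hub-column part $\{(w_*,w_i)\in I\}$, and the set $D$ of vertices both of whose coordinates are cycle vertices. Put $J = \{j\in\mathbb{Z}_N : (w_j,w_*)\in I\}$ and $R = \{i\in\mathbb{Z}_N : (w_*,w_i)\in I\}$, so $|J| = k$ and $|R| =: m$. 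Since $w_*$ is adjacent to every cycle vertex, both $J$ and $R$ are independent sets of $C_N$, whence $k,m \le \alpha(C_N) = t$; the inequality $k\le t$ is exactly what will drive the final bound.

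The structural heart is that each hub-coordinate vertex forbids a whole cycle-line of $D$: if $(w_j,w_*)\in S$ then it is adjacent to $(w_j,w_q)$ for every cycle vertex $w_q$, so $D$ omits every vertex with first coordinate $w_j$; symmetrically each $(w_*,w_i)\in I$ forces $D$ to omit every vertex with second coordinate $w_i$. Hence $D$ is an independent set in the subgraph induced on cycle$\times$cycle vertices that avoids the columns indexed by $J$ and the rows indexed by $R$, which is precisely the Cartesian product $(C_N - J)\Box(C_N - R)$. First I would dispose of $A$: if $(w_*,w_*)\in I$ it is adjacent to every other hub-coordinate vertex, forcing $S$ and the hub-column part empty, so $k=m=0$ and $|I|\le 1+\alpha(C_N\Box C_N)\le 1+tN = T$ (the last equality using $k=0$), where $\alpha(C_N\Box C_N)\le tN$ follows by slicing into $N$ copies of $C_N$, each contributing at most $t$.

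Assuming henceforth $(w_*,w_*)\notin I$, I would bound $|I| = k+m+|D|$ by how many factors of $(C_N-J)\Box(C_N-R)$ remain full odd cycles. If $k=0$ or $m=0$ one factor is $C_N$, and slicing that product into $C_N$-layers gives $|D|\le t(N-m)$ or $|D|\le t(N-k)$; a one-line computation then yields $|I|\le tN < T$ when $k=0$, and $|I|\le tN+(1-t)k = T-1 < T$ when $m=0$. The substantial case is $k,m\ge 1$: now $C_N-J$ is a disjoint union of $k$ paths of total order $N-k$, and $C_N-R$ a union of $m$ paths of total order $N-m$, so the product is a disjoint union of grid graphs $P_a\Box P_b$, each bipartite. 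Therefore $|D|\le \tfrac12\bigl[(N-k)(N-m)+\iota\bigr]$, where $\iota$ counts the grid blocks with $ab$ odd, i.e.\ the pairs consisting of an odd-order path from each factor (these are exactly the blocks whose two color classes are unbalanced).

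The crux is a sharp, parity-based control of $\iota$. Because the $k$ path-orders in $C_N-J$ sum to $N-k \equiv 1+k \pmod 2$, the number of odd-order paths cannot equal $k$, so it is at most $k-1$; likewise the second factor has at most $m-1$ odd-order paths, giving $\iota\le (k-1)(m-1)$. Substituting this into $|I| = k+m+|D|$ and simplifying, every higher-order term cancels and one obtains the clean inequality $|I| - T \le (t-k)(1-m)$, which is $\le 0$ since $k\le t$ and $m\ge 1$. I expect this final case to be the main obstacle: the naive bipartite estimate $\iota\le km$ is too lossy, leaving a positive slack of order $\tfrac12$ and failing to produce an integer bound, and it is precisely the parity refinement $\iota\le(k-1)(m-1)$ that makes the telescoping identity $|I|-T=(t-k)(1-m)$ go through and complete the proof.
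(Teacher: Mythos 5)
Your proof is correct, and it takes a genuinely different route from the paper's. The paper disposes of $k=0$ via $\alpha(W_{2t+1}\Box C_{2t+1})\le t(2t+1)$, and for $k\ge 1$ deletes from each cycle layer only the rows forbidden by $I_*$, so each layer induces $(\cup_{j}P_{s_j})\vee K_1$; it then bounds $|I|\le |I_*|+\sum_j\alpha(P_{s_j}\Box C_{2t+1})\le (k+1)+t(2t+1-k)$, exploiting the odd cycle's ratio through $\alpha(P_s\Box C_{2t+1})\le ts$. You instead symmetrize: you track the hub-column count $m$ explicitly, confine $D$ to $(C_{2t+1}-J)\Box(C_{2t+1}-R)$, a disjoint union of grids, and close with the parity estimate $\iota\le(k-1)(m-1)$ and the identity $|I|-T\le(t-k)(1-m)$, which I have checked and which is exactly right. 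What your version buys is rigor at precisely the point where the paper is loose: the paper's passage from $\alpha(H_1)=\alpha(H_2)$ to $|I|\le|I_*|+\alpha\big(\cup_j P_{s_j}\Box C_{2t+1}\big)$ is not a valid inference (equal independence numbers of factors do not transfer to Cartesian products), and that displayed intermediate inequality is in fact false: for $t=2$, $k=1$, the set $\{(w_0,w_*),(w_*,w_0)\}$ together with a checkerboard class of the grid on cycle rows and columns $\{1,2,3,4\}$ is an independent set of size $10$ with $|I_*|=1$, while $|I_*|+\alpha(P_4\Box C_5)=1+8=9$; the paper's final bound survives only because of the slack $|I_*|\le k+1$, even though $|I_*|=k$ whenever $k\ge1$. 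Your $m$-dependent accounting repairs this gap outright and also shows exactly when the lemma is tight, namely $m=1$ or $k=t$. The one spot you should patch is the grid bound: bipartiteness alone does not give $\alpha(P_a\Box P_b)\le\lceil ab/2\rceil$ (stars are bipartite with $\alpha$ far above half, so ``unbalanced colour classes'' is not by itself a bound on $\alpha$); instead observe that $P_a\Box P_b$ has a spanning Hamiltonian (boustrophedon) path, and independent sets remain independent in spanning subgraphs, so $\alpha(P_a\Box P_b)\le\alpha(P_{ab})=\lceil ab/2\rceil$. With that one line added, your argument is complete; the paper's proof is shorter and leans on the odd cycle, but yours is tighter in its accounting and, unlike the paper's write-up of the $k\ge1$ case, fully airtight.
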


\begin{proof} If $k = 0$, then 
\[|I|\le \alpha(W_{2t+1}\Box C_{2t+1}) + 1\le \alpha(W_{2t+1})\cdot (2t+1) + 1 = t(2t+1) + 1,\]
and the assertion holds.

Now, assume that $k\ge 1$. Observe that if $(a, w_*)\in I_*$, then $(a, w_i)\notin I_i$ for all $i\in \mathbb{Z}_{2t+1}$. For any $i$, the set $I_i - \{(b,w_i): (b, w_*)\in I_*\}$ induces a subgraph in $W_{2t+1}^{\Box 2}$ which is isomorphic to: either $H_1:=\cup_{j = 1}^{k} P_{s_j}$ if $(w_*, w_*)\in I_*$, or $H_2:=\left(\cup_{j = 1}^{k} P_{s_j}\right) \vee K_1$ if $(w_*, w_*)\notin I_*$. Here $P_{s_j}$ is a path on $s_j$ vertices, and we have $\sum_{j=1}^{k}s_j = 2t + 1 - k$. Note that $|V(H_1)| = 2t + 1-k\ge 1$ since $k \le t$. It follows that $\alpha(H_1) = \alpha(H_2)$ and so, irrespective of whether $(w_*, w_*)\in I_*$, we have 
\begin{align*}
    |I| & \le |I_*| + \alpha(\cup_{j = 1}^{k} P_{s_j} \Box C_{2t+1})\\
    & = |I_*| + \sum_{j=1}^k \alpha(P_{s_j}\Box C_{2t+1})\\
    & \le (k + 1) + \sum_{j=1}^{k}ts_j\\
    & = k + 1 + t(2t + 1 -k), 
\end{align*}
completing the proof.
\end{proof}

We can now determine $\alpha(W_{2t+1}^{\Box 2})$. 

\begin{proposition}\label{prop:higher_wheel_second_power}
    For any $t\ge 2$, we have 
    \[ 
    \alpha(W_{2t+1}^{\Box 2}) =  (2t+1)t+1.
    \]
\end{proposition}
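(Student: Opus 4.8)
The plan is to prove $\alpha(W_{2t+1}^{\Box 2}) = (2t+1)t+1$ by establishing matching upper and lower bounds. The upper bound is immediate from Lemma~\ref{lemma:ind_set_higher_power_large_center}: for any independent set $I$ in $W_{2t+1}^{\Box 2}$, write $k = |I_*\backslash\{(w_*,w_*)\}|$. The lemma gives $|I| \le t(2t+1) + 1 + (1-t)k$. Since $t\ge 2$, the coefficient $(1-t)$ is negative (or zero only when $t=1$), so the bound is maximized by taking $k=0$, yielding $|I| \le t(2t+1)+1$. As this holds for every independent set, we conclude $\alpha(W_{2t+1}^{\Box 2}) \le (2t+1)t+1$.

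For the lower bound, I would exhibit an explicit independent set of size $(2t+1)t+1$. The natural candidate, consistent with the $k=0$ case being extremal, is to place $(w_*, w_*)$ in the set together with a large independent set inside the product of the cycle part with $C_{2t+1}$. Concretely, since $\alpha(W_{2t+1}) = t$ (Proposition~\ref{prop:higher_wheels_small_independence}$(i)$) and we want to use the full $(2t+1)$ copies of $H_i^{(2)}$ for $i \in \mathbb{Z}_{2t+1}$, the aim is to construct a set that restricts to a maximum independent set of $W_{2t+1}$ in each layer $H_i^{(2)}$ while avoiding conflicts across adjacent layers (the layers are linked via the outer $C_{2t+1}$ structure). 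One clean way is to take a maximum independent set $A = \{w_{a_1}, \dots, w_{a_t}\}$ of the rim of $W_{2t+1}$ (an independent set in $C_{2t+1}$ of size $t$) and use a shifting construction: in layer $i$ place the vertices $\{(w_{a_j + i}, w_i) : 1 \le j \le t\}$, indices taken in $\mathbb{Z}_{2t+1}$, then add the single apex vertex $(w_*, w_*)$. I would verify this gives $(2t+1)\cdot t + 1 = (2t+1)t+1$ vertices and check independence: within a layer independence follows since $A$ is independent in the rim, and between adjacent layers $i, i+1$ one checks that the shifted copies do not collide on the same rim coordinate, using that $A + i$ and $A + (i+1)$ are disjoint when $A$ is a maximum independent set of $C_{2t+1}$.

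Combining the two bounds gives the equality. The main obstacle is the lower bound construction, not the upper bound, which is essentially a one-line consequence of the lemma. Specifically, the delicate point is verifying that the shifting pattern avoids edges between consecutive layers: an edge in $W_{2t+1}^{\Box 2}$ between $(w_{a}, w_i)$ and $(w_{b}, w_{i+1})$ occurs precisely when $a = b$, so I must confirm that $(A+i)\cap(A+(i+1)) = \emptyset$, i.e. that no element of $A$ equals another element of $A$ shifted by $1$. Equivalently, $A$ must contain no two rim-vertices at cyclic distance $1$, which is exactly the statement that $A$ is independent in $C_{2t+1}$ — so the construction is self-consistent, but the wraparound at the index $2t+1 \equiv 0$ needs a careful check to ensure the last layer meshes correctly with the first. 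I would also confirm that the apex vertex $(w_*, w_*)$ is adjacent to none of the chosen rim vertices, which holds because $(w_*, w_*)$ is nonadjacent to $(w_a, w_i)$ whenever $i \ne *$ and $a \ne *$ (the two coordinates differ in both slots, so neither Cartesian-product adjacency condition applies).
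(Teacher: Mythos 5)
Your proposal is correct and takes essentially the same approach as the paper: the upper bound is the same application of Lemma~\ref{lemma:ind_set_higher_power_large_center} (you apply it directly, noting the bound decreases in $k$ since $t\ge 2$, while the paper phrases the same fact as a short contradiction forcing $k\ge 2$), and your shifted construction is precisely the paper's explicit independent set $\{(w_{2i+j},w_j): 0\le i\le t-1,\ 0\le j\le 2t\}\cup\{(w_*,w_*)\}$, which is your scheme with $A=\{0,2,\dots,2t-2\}$. Your verification details (the reduction of cross-layer conflicts to $A\cap(A+1)=\emptyset$, i.e.\ independence of $A$ in $C_{2t+1}$, and the non-adjacency of $(w_*,w_*)$ to all rim-rim vertices) are sound and fill in exactly what the paper leaves to the reader.
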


\begin{proof}
     The reader can verify that 
    \[
    \{(w_{2i+j},w_j): 0 \leq i\leq t-1, 0\leq j \leq 2t\} \cup \{(*,*)\}
    \]
    is an independent set of size $(2t+1)t+1$ in $W_{2t+1}^{\Box 2}$. 
    Now, if there exists an independent set $I$ with $|I|> (2t+1)t + 1$, then $|I_*|\ge 2$. 
    Notice that $(w_*,w_*)\in I_*$ implies $|I_*| = 1$, so we conclude that $(w_*,w_*)\notin I_*$.
    Thus $k = |I_*\backslash \{(w_*, w_*)\}| = |I_*| \ge 2$. 
    Using Lemma \ref{lemma:ind_set_higher_power_large_center}, we have 
    \[(2t+1)t + 1 < |I|\le t(2t+1) + 1 + (1-t)k < t(2t+1) + 1, \]
    given $t,k\geq 2$. This is a contradiction.
\end{proof}

In what follows, we estimate $\alpha(W_{2t+1}^{\Box 2}\Box K_3)$ and use it to furnish a proof of Theorem \ref{thm:higher_wheel_best_bound}. 

\begin{proof}[Proof of Theorem \ref{thm:higher_wheel_best_bound}] Assume $V(K_3) = \{w_i: i\in \mathbb{Z}_3\}$.
Let $S$ denote an independent set in $W_{2t+1}^{\Box 2}\Box K_3$. For $i\in \mathbb{Z}_3$ and $j\in \mathbb{Z}_{2t+1}\cup \{*\}$ define  
\[S_i = \{(a_1, a_2, w_i) \in S: a_1, a_2 \in V(W_{2t+1})\}\quad \text{and}\quad S_{i, j} =  \{(a_1, w_j, w_i) \in S: a_1\in V(W_{2t+1})\}.\]
Let $k_i = |S_{i,*}\backslash \{(w_*, w_*, w_i)\}|$ for $i\in \mathbb{Z}_3$. Using Lemma \ref{lemma:ind_set_higher_power_large_center}, we see that for any $i$, 
\begin{align*}
|S_i|  & \le t(2t+1) + 1 + (1-t)k_i, 
\end{align*}
which implies
\begin{align*}
    k_i \le \frac{t(2t+1) + 1 - |S_i|}{t-1}.
\end{align*}
Observe that $\sum_{i=0}^2 |S_{i,*}| \le k_1+k_2+k_3+1$, with equality only if there exists an $i$ such that $k_i = 0$.
This is because
there is at most one index $i$ such that $(w_*, w_*, w_i)\in S$, 
while $(w_*, w_*, w_i)\in S$ implies that $S_{i,*} = \{(w_*, w_*, w_i)\}$.
Suppose without loss of generality that $k_1\ge k_2 \ge k_3$.
Hence, $\sum_{i=0}^2 |S_{i,*}| \le \max\{ k_1+k_2+k_3, k_1+k_2+1\}$.
Now, using Proposition \ref{prop:higher_wheels_small_independence}
\begin{align*}
    |S|& \le \left(\sum_{i=0}^2 |S_{i,*}|\right) + \sum_{j=0}^{2t}\left(|S_{0,j}| + |S_{1,j}| + |S_{2,j}| \right)\\
    & \le \max\{ k_1+k_2+k_3, k_1+k_2+1\} + (2t+1)\alpha(W_{2t+1}\Box K_3)\\
    & \le \max\left\{ 1 + \sum_{i=0}^1 \left(\frac{t(2t+1)  + 1 - |S_i|}{t-1}\right), \sum_{i=0}^2 \left(\frac{t(2t+1)  + 1 - |S_i|}{t-1}\right)\right\} + (2t+1)^2\\
    & = \max\left\{ 1 + \sum_{i=0}^1 \left(\frac{t(2t+1) + 1}{t-1}\right),  \sum_{i=0}^2 \left(\frac{t(2t+1) + 1}{t-1}\right) \right \}  + (2t+1)^2 - \frac{|S|}{t-1}\\
\end{align*}
Rearranging the terms, we get 
\begin{align*}
    |S| \le \frac{(t-1)\left(1 + (2t+1)^2\right) + 2t(2t+1) + 2}{t} = 4t^2+4t
\end{align*}
if the first term is the maximum, or we get
\begin{align*}
    |S| \le \frac{(t-1)(2t+1)^2 + 3t(2t+1) + 3}{t} = 4t^2 + 6t + \frac{2}{t}.
\end{align*}
if the second term is the maximum.
Since $|S|$ is an integer, and $t \ge 3$ we get that 
\[|S| \leq 4t^2 + 6t.\]
Finally, using Theorem \ref{thm:upper_bound_triangle} with $\ell = 2$, we get 
\[ \mathscr{I}(W_{2t+1}) \le \frac{4t^2 + 6t}{3(2t+2)^2}.\]
This completes the proof.
\end{proof}

\section{Computational Framework}

The purpose of this section is to outline the computational methods we used. This is to help the reader verify our computational results and potentially build on our work to obtain new bounds.
All of our code can be found \href{https://github.com/Shivaramkratos/Ultimate_Independence_ratio_Python_Gurobi_code}{here}.
Please see the \href{https://github.com/Shivaramkratos/Ultimate_Independence_ratio_Python_Gurobi_code/blob/main/ReadMe.txt}{ReadMe.txt} file for instructions on how to read,
run, and reproduce our code.

This section is divided into two subsections.
Subsection \ref{subsection:indendendent_sets} covers how we compute independence numbers,
and how we use integer linear programming methods to disprove the existence of certain kinds of independent sets
of near maximum size.
In Subsection \ref{subsection:indendendent_sets} we describe how we compute the fractional chromatic numbers
appearing in Table~\ref{table:smallwheel}.

\subsection{Understanding Independent Sets}\label{subsection:indendendent_sets}

Here, we outline the integer linear programming (IP) formulations that we use for calculations related to the existence of independent sets.
This goes beyond computing the independence number, as we are interested not only in the size of independent sets,
but their structure.
Our calculations are run using Gurobi's linear programming solvers.

For a graph $G$, define the matrix $B(G) = [b_{ue}] \in \{0,1\}^{V(G)\times E(G)}$, where $b_{ue} = 1$ if $u$ is an endpoint of the edge $e$, and $b_{ue} = 0$ otherwise. The matrix $B(G)$ is called the \textit{incidence matrix} of $G$. 
Let $\mathbf{1}$ denote the all-ones vector of appropriate order. The standard IP formulation for the independence number $\alpha(G)$ is given by:

\begin{align}\label{eq: standardAlpha}
    \max \quad & \sum_{v \in V(G)}x_{v} \\
    \text{subject to} \quad &  B(G)^Tx \leq \mathbf{1} \nonumber\\
            & x \in \{0,1\}^{V(G)}. \nonumber
\end{align}

%\begin{equation}
%\begin{array}{ll@{}ll}
%\alpha(G) = &\max &\displaystyle\sum\limits_{v \in V(G)}x_{v} \\
%            &\text{s.t} &B^Tx \leq \mathbf{1} \\
%            & &x \in \{0,1\}^{V(G)}.
%\end{array}
%\end{equation}

Notice here that for any independent set $S$ in the graph $G$, the indicator vector for $S$ is an element of the feasible region of this IP. Thus, if we wish to study independent sets of $G$ satisfying additional properties $\mathcal{P}$, it may be sufficient to add constraints to this program, depending on the choice of $\mathcal{P}$.
In particular, if one wishes to show that there is no independent set of $G$ with a certain property $\mathcal{P}$, then it is sufficient to show that a version of this program with added constraints is infeasible.

Using the simple formulation above, the number of variables and constraints grows quickly in higher powers of Cartesian products. 
To mitigate this, we use the structure of the Cartesian product to obtain finer control over the structure of independent sets.
For simple graphs $G$ and $H$, the following IP gives $\alpha(G \Box H)$: 
\begin{align}\label{prog:alpha_QCIP}
\max \quad & \sum_{v \in V(H)} \mathbf{1}^Tx_{v} \\
\text{subject to} \quad & B(G)^Tx_v \leq \mathbf{1}  & &\hspace*{-3cm}\forall \ v\in V(H)\nonumber\\
& x_u + x_v \leq \mathbf{1} & &  \hspace*{-3cm} \forall \ uv \in E(H) \nonumber\\
& x_v \in \{0,1\}^{|V(G)|} & & \hspace*{-3cm} \forall \ v\in V(H).\nonumber 
\end{align}

%begin{equation} 
%\begin{array}{ll@{}ll}
%\alpha(G \Box H) = &\max &\displaystyle\sum_{v \in V(H)} \mathbf{1}^Tx_{v} \\
%            &\text{s.t} & B(G)^Tx_v \leq \mathbf{1} & \forall \ v\in V(H)\\
%            & &x_u + x_v \leq \mathbf{1} & \text{if} \ uv \in E(H) \\
%            & &x_v \in \{0,1\}^{|V(G)|} & \forall \ v\in V(H). 
%\end{array}
%\end{equation}
This is a straightforward generalization of the standard program for independence number. The finer control this gives us to examine the structure of independent sets satisfying additional properties is worth stating explicitly.

Note here that $V(G \Box H)$ has a natural partition of the form $\{\{(u,v): u \in V(G)\}: v \in V(H)\}$,
where the subgraph induced by each of these sets is isomorphic to $G$.
Let $I$ be an independent set in $G \Box H$.
In Program~\ref{prog:alpha_QCIP} each variable $x_v$ is the indicator vector for the subset 
\[I_v = I\cap \{(u,v): u \in V(G)\}\] of $I$.
Observing this, Program~\ref{prog:alpha_QCIP} provides the following advantage.
Recalling how Lemma~\ref{lemma:ind_set_higher_power_large_center} is used to prove Theorem~\ref{thm:higher_wheel_best_bound}, it is sometimes useful to understand how forcing a subgraph to contain many vertices in an independent set
affects the structure of the independent set in the rest of the graph.
By introducing the variable vectors $x_v$, rather than giving each vertex of the graph $G\Box H$ its own variable, there is an easy-to-implement and intuitive way to study independent sets $I$ where 
$\{(u,v): u \in V(G)\}$ contains at least $k$ vertices of $I$.
To do so, one can simply add the constraint $\mathbf{1}^Tx_v \geq k$ to  Program~\ref{prog:alpha_QCIP}.
Of course, one can add several such constraints simultaneously as well.

In Lemma~\ref{lemma:third_power_center_independence}, 
Lemma~\ref{lemma:57 center 9}, and Lemma~\ref{lemma:thirdpower_wit5h_K3_infeasible_sets} we do exactly this to prove certain kinds of independent sets do not exist in $W^{\Box 3}_5$.
That is, we add additional constraints of the form $\mathbf{1}^Tx_v \geq k$ or $\mathbf{1}^Tx_v = k$ to Program~\ref{prog:alpha_QCIP},
then run the resulting program in Gurobi, which certifies the modified program is infeasible,
implying that no independent sets with the prescribed structure exist.
Leveraging the non-existence of independent sets with these structures, we then prove theoretically 
that $\alpha(W^{\Box 3}_5) = 170$, $\alpha(W_5^{\Box 4}) \leq 343$, and $\alpha(W^{\Box 4}_5 \Box K_3) \leq 1019$, respectively.

Each time that we conduct such a procedure, we must run a different integer program in Gurobi.
For each instance of such a calculation, we provide a distinct file, linked in the text, so that the reader 
can reproduce and verify our computational work.
All files are commented to assist the reader's understanding
and files may be read in any order.

\subsection{Calculating Fractional Chromatic Number}\label{subsection:fractional_chromatic_no} 

Hahn, Hell, and Poljak~\cite{hahn1995ultimate} proved the following linear programming characterization of the fractional chromatic number $\chi_f(G)$ of a graph $G$.  Let $\mathcal{I}(G)$ denote the family of all independent sets of $G$. Then the following is a program for $\frac{1}{\chi_f(G)}$:
\begin{align}\label{prog:1/chi_f}
    \min \quad &  z \\
    \text{subject to} \quad & \sum_{v \in V(G)} x_{v} = 1  \nonumber\\
    & \sum_{v \in I} x_{v} \le z & & \hspace*{-4cm} \forall \, I \in \mathcal{I}(G) \nonumber\\
    & x_v \ge 0 & &\hspace*{-4cm} \forall \, v \in V(G).\nonumber
\end{align}

%\begin{equation}
%\begin{array}{ll@{}ll}
%\dfrac{1}{\chi_f(G)} = &\min \ z \\
%            &\text{s.t.} 
%            & \displaystyle\sum_{v \in V(G)} x_{v} = 1, \\[0.4em]
%            &&\displaystyle\sum_{v \in I} x_{v} \le z
%            & \forall\, I \in \mathcal{I}(G), \\[0.4em]
%            && x_v \ge 0 
%            & \forall\, v \in V(G).
%\end{array}
%\end{equation}

It is straightforward to observe that it suffices to impose the constraints 
\[ \sum_{v\in I} x_v \le z \] only for the \emph{maximal} independent sets $I$ of $G$, any non-maximal independent set is contained in a maximal one, and therefore its corresponding constraint is dominated.

To simplify the program further, we exploit the symmetries of the graph via its automorphism group. 

\begin{theorem}\label{thm:X_f_automorphism_orbits} 
Let $G=(V, E)$ be a simple graph, and let $\mathcal I(G)$ denote the family of independent sets of $G$. Let $\Gamma=\mathrm{Aut}(G)$ act on $V$ and on $\mathcal I$ in the natural way. Then there exists an optimal solution $x$ to Program \ref{prog:1/chi_f} that is constant on the orbits of maximal independent sets under $\Gamma$. 
\end{theorem}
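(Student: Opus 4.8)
The plan is to prove the theorem by symmetrization (a Reynolds-operator argument): starting from an arbitrary optimal solution, I average it over the action of $\Gamma$ to produce an optimal solution that is $\Gamma$-invariant, and therefore constant on each orbit of maximal independent sets.

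First I would make the action of $\Gamma = \mathrm{Aut}(G)$ on the solution precise. Every $\sigma \in \Gamma$ preserves adjacency and non-adjacency, so it carries each independent set to an independent set of the same size and, in particular, permutes the maximal independent sets of $G$ among themselves. Regarding the optimal solution $x$ as indexed by the maximal independent sets, this gives an induced linear action on solution vectors by $(\sigma \cdot x)_I = x_{\sigma^{-1}(I)}$. I would then verify that Program~\ref{prog:1/chi_f} is invariant under this action: because the objective and every constraint are expressed purely through vertex membership and the independence relation---both preserved by $\sigma$---the map $x \mapsto \sigma \cdot x$ sends feasible points to feasible points, permutes the defining inequalities among themselves, and leaves the objective value unchanged. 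Consequently, if $x$ is optimal then so is $\sigma \cdot x$ for every $\sigma \in \Gamma$.

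Next I would set $\bar{x} = \frac{1}{|\Gamma|}\sum_{\sigma \in \Gamma} \sigma \cdot x$. Since the feasible region of a linear program is convex and each $\sigma \cdot x$ is feasible, $\bar{x}$ is feasible; since the objective is linear and each $\sigma \cdot x$ is optimal, $\bar{x}$ also attains the optimum. A reindexing of the group sum gives $\tau \cdot \bar{x} = \frac{1}{|\Gamma|}\sum_{\sigma \in \Gamma} (\tau\sigma)\cdot x = \bar{x}$ for every $\tau \in \Gamma$, i.e. $\bar{x}_{\tau(I)} = \bar{x}_I$ for all maximal independent sets $I$; hence $\bar{x}$ is constant on each $\Gamma$-orbit of maximal independent sets, which is the desired conclusion. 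The reduction to maximal independent sets is legitimate by the remark preceding the theorem, since every non-maximal set's constraint is dominated and $\Gamma$ preserves maximality.

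I expect the main obstacle to be the bookkeeping in the invariance step rather than the averaging itself: one must confirm that automorphisms genuinely preserve maximality, that the induced coordinate permutation matches defining inequalities to defining inequalities with none left over, and that the objective coefficients are constant on orbits, so that the feasible region is exactly $\Gamma$-invariant. Once this symmetry is established, feasibility and optimality of $\bar{x}$ follow routinely from convexity and linearity, and the orbit-constancy is immediate.
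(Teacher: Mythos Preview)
Your symmetrization/Reynolds-operator approach is exactly what the paper does, and the convexity/linearity justification is correct. However, you have misread Program~\ref{prog:1/chi_f}: its variables $x_v$ are indexed by \emph{vertices} $v\in V(G)$, not by independent sets---the independent sets index the \emph{constraints}. The theorem's wording is admittedly confusing on this point, but the paper's own proof defines the group action by $(g\cdot y)_v := y_{g^{-1}(v)}$ and concludes that the averaged solution is constant on the orbits of $V$ under $\Gamma$; this is precisely what is used immediately afterward, where the vertex set of $W_{2t+1}^{\Box 2}$ is partitioned into three orbits $T_1,T_2,T_3$ and the LP variables are taken constant on each. Your argument transfers verbatim once the indexing is corrected: the same averaging gives a $\Gamma$-invariant optimal $x$, hence constant on vertex orbits, and then the constraints for any two maximal independent sets in the same $\Gamma$-orbit automatically coincide.
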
 

\begin{proof} 
Let $y$ be any feasible solution of the Program \ref{prog:1/chi_f}. For each $g\in\Gamma$ define another feasible solution $g\cdot y$ by $(g\cdot y)_v := y_{g^{-1}(v)}$. Since $g$ permutes vertices and maximal independent sets, $g\cdot y$ satisfies all constraints of the Program \ref{prog:1/chi_f} and has the same objective value as $y$. 

Now consider the following feasible solution: 
\[ x := \frac{1}{|\Gamma|}\sum_{g\in\Gamma} g\cdot y. \] 
Then $x$ is feasible, has the same objective value as $y$, and is invariant under $\Gamma$, that is to say, for all $h\in\Gamma$ we have $h\cdot x=x$. Thus $x$ is constant on the orbits of $V$ under $\Gamma$ and we are done. 
\end{proof}

The automorphism group partitions the vertex set into the following orbits:
\[
T_1=\{(w_*,w_*)\},\quad
T_2=\{(w_i,w_*),(w_*,w_i):0\le i\le 2t\},\quad
T_3=\{(w_i,w_j):0\le i,j\le 2t\}.
\]
By Theorem~\ref{thm:X_f_automorphism_orbits}, the LP variables may be taken to be constant on these orbits, and each constraint depends only on the number of vertices from each orbit appearing in an independent set.

Following the notation of~\cite{hahn1995ultimate}, we associate to every independent set $I\in\mathcal{I}(W_{2t+1}^{\Box 2})$ its profile
\[
P_I=(p_1,p_2,p_3),
\]
where $p_i$ is the number of vertices of $I$ lying in $T_i$.  
We call a profile $(p_1,p_2,p_3)$ \emph{maximal} if there is no other profile $(q_1,q_2,q_3)$ such that 
$q_i\geq p_i$ for each $i\in\{1,2,3\}$ and 
\[
q_1+q_2+q_3 \;\ge\; p_1+p_2+p_3.
\]
Maximal profiles correspond exactly to those independent sets whose orbit–counts cannot be simultaneously increased, hence they yield all potentially tight constraints in the LP. 

The maximal profiles for $W_5^{\Box 2}$ were obtained in \cite{hahn1995ultimate}, and they are the following:
\[(1,0,10), (0,2,8), (0,3,6), (0,4,5).\] 
Using this, they set up the reduced LP and solved it to obtain $\chi_f(W_5^{\Box 2}) = \frac{41}{11}$, which in turn yields a better upper bound on $\mathscr{I}(W_5)$. For higher odd wheels, finding the profiles manually is quite challenging, so we used computer assistance to do so. A naive method to search for the profiles is to enumerate over all maximal independent sets and count the number of vertices that belong to each orbit, but the number of maximal independent sets grows exponentially with respect to the order of the wheel. For example, $W^{\Box2}_7$ has $909874$ (approximately a million) maximal independent sets as compared to $\chi_f(W^{\Box2}_5)$, which has only $2770$ maximal independent sets. To simplify the computational effort, we need the following two observations:

\begin{enumerate}
    \item The maximum independent set in $W_{2t+1}^{\Box 2}$ by Lemma \ref{lemma:ind_set_higher_power_large_center} and Proposition \ref{prop:higher_wheel_second_power} always contains the vertex $(w_*,w_*)$ and also any independent set containing the vertex $(w_*,w_*)$ cannot contain vertices from the set $T_2$. Thus in the profile if we choose the vertex $(w_*,w_*)$, then maximal profile would be $(1, 0 , t(2t+1))$.
    \item We cannot choose more than $2t$ independent vertices from set $T_2$, since the set $T_2$ induces two disjoint copies of the cycle $C_{2t+1}$ in $W_{2t+1}^{\Box 2}$ and $\alpha(C_{2t+1}) = t$.
\end{enumerate}

Hence, the way to find candidates for maximal profiles is to enumerate over all choices of $\{1,2,\cdots, 2t\}$ for the independent set in $T_2$ and find the maximum independent set in $T_3$ when restricted to each choice. To do this, we set up an LP which is similar in spirit to Program~\ref{prog:alpha_QCIP}.

Let $B(T_i)$ be the incidence matrix of the graph induced by $T_i$ in $W_{2t+1}^{\Box 2}$ and let $B$ be the incidence matrix of $W_{2t+1}^{\Box 2}$. Let $x,y$ be indicator vectors for independent sets on $T_2$ and $T_3$, respectively. 
For a fixed choice of $k$ in $\{1,2,\cdots, 2t\}$ consider the LP:

\begin{align} \label{prog:profile_calc}
   r =  \max \quad & \mathbf{1}^T y \nonumber\\
   \text{subject to} \quad & B(T_2)^Tx \leq \mathbf{1} \nonumber\\
            &  B(T_3)^Ty\leq \mathbf{1}  \nonumber\\
            &  B^T \begin{pmatrix} 0 \\
            x \\
            y\end{pmatrix}\leq \mathbf{1} \nonumber \\
            &  \mathbf{1}^T x = k.  \nonumber 
\end{align}

%\begin{equation}
%\begin{array}{ll@{}ll}
%    r =  &\max \ &\displaystyle \mathbf{1}^T y\\
%            &\text{s.t} & B(T_2)^Tx \leq \mathbf{1} \\
%            & & B(T_3)^Ty\leq \mathbf{1}  \\
%            & & B^T \begin{pmatrix} 0 \\
%            x \\
%            y\end{pmatrix}\leq \mathbf{1} \\
%            & & \mathbf{1}^T x = k.
%\end{array}
%\end{equation}

Note that this LP returns the maximum $r$ such that there is an independent set 
with profile $(0,k,r)$. Performing this computation for $W_7^{\Box 2}$ we get these as our maximum profiles:
\[(1, 0, 21), (0, 1, 18), (0, 2, 18), (0, 3, 15), (0, 4, 13), (0, 5, 11), (0, 6, 10)\]
Notice that the profile $(0,1,18)$ is redundant. 
Following the notation in \cite{hahn1995ultimate}
we let
$a$ be the value on $T_1$, $b$ the value on $T_2$, and $c$ the value on $T_3$ in an optimal solution of Program~\ref{prog:1/chi_f}.
Now setting up the reduced LP according to these profiles, and these variables:
\begin{align} \label{prog:reduced_LP}
    \dfrac{1}{\chi_f(W_7^{\Box 2})} =  \min \quad & z \\
            \text{subject to} \quad & a + 21 c \leq z \nonumber\\
            &  2b + 18c \leq z  \nonumber \\
            &  3b + 15c \leq z   \nonumber\\
            &  4b + 13c \leq z   \nonumber\\
            &  5b + 11c \leq z   \nonumber\\
            &  6b + 10c \leq z   \nonumber\\
            &  a + 14b + 49c = 1.  \nonumber
\end{align}
%\begin{equation}
%\begin{array}{ll@{}ll}
%    \dfrac{1}{\chi_f(W_7^{\Box 2})} =  &\min \ &\displaystyle z\\
%            &\text{s.t} & a + 21 c \leq z \\
%            & & 2b + 18c \leq z   \\
%            & & 3b + 15c \leq z   \\
%            & & 4b + 13c \leq z   \\
%            & & 5b + 11c \leq z   \\
%            & & 6b + 10c \leq z   \\
%            & & a + 14b + 49c = 1.  \\
%\end{array}
%\end{equation}
Solving the above LP, we get $\chi_f(W_7^{\Box 2}) = \frac{39}{11}$.

We do a similar computation for $W_9^{\Box 2}$ and obtain the profiles 
\[(1, 0, 36), (0,1,32),(0, 2, 32), (0, 3, 28), (0, 4, 25), (0, 5, 22), (0, 6, 20), (0, 7, 18), (0 ,8, 17).\]
Again, notice that $(0,1,32)$ is redundant. Setting up and solving reduced LP gives us $\chi_f(W_9^{\Box 2}) = \frac{127}{37}$. We do similarly for all higher wheels as listed in the Table \ref{table:smallwheel}. 

In our computations, we observe that the following pattern holds: 
$(0,2,2t^2) , (0,2t,t^2+1)$ are maximal profiles and
the constraints corresponding to the profiles $(1,0,t(2t+1)) , (0,2,2t^2) , (0,2t,t^2+1)$ are always equalities. 
If this pattern holds for all $t$, then together with the first constraint in Program \ref{prog:1/chi_f} we obtain the
following system of linear equations
\begin{align*}
    a + t(2t+1)c &= z \\
    2b + 2t^2c &= z \\
    (2t)b +  (t^2+1)c &= z \\
    a + (4t+2)b + (2t+1)^2c &= 1. 
\end{align*}
Solving these gives $z = \frac{2t^2+t+1}{6t^2+7t+3}$.
Hence, if this pattern continues for large $t$, then this would imply $\chi_f(W^{\Box 2}_{2t+1}) = \frac{6t^2+7t+3}{2t^2+t+1}$ for all $t$.

\section{On the 5-Wheel}

In this section, we first outline a proof of Theorem \ref{thm:5_wheel_thirdpower_triangle}.
Building on this work, we go on to prove bounds for $\alpha(W^{\Box 4}_5)$ and $\alpha(W_{5}^{\Box 4}\Box K_3)$,  leading to a proof of Theorem \ref{thm:5_wheel_best_bound}. 

\subsection{On $\alpha(W_{5}^{\Box 3}\Box K_3)$}

We begin with some easy-to-verify computational facts
in Lemma~\ref{lemma:5_wheel_small_indendence_no}.
Any standard computational tool for calculating the independence number, for example SageMath, is sufficient to verify this lemma. As a result, we leave the verification of this lemma to the reader.

\begin{lemma}\label{lemma:5_wheel_small_indendence_no}
The following are known.
    \begin{enumerate}[$(i)$]
        \item $\alpha(W_{5}^{\Box 2}) = 11$.
        \item $\alpha(W_{5}^{\Box 2}\Box K_3)=29$.
        \item $\alpha(W_{5}^{\Box 3}) = 58$.
    \end{enumerate}
\end{lemma}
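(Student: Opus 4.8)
The plan is to dispatch part (i) and the upper bound of (ii) using theory already established, reserving the computer only for the lower bounds and for part (iii). Part (i) needs no computation at all: it is exactly the $t=2$ case of Proposition~\ref{prop:higher_wheel_second_power}, which gives $\alpha(W_5^{\Box 2}) = (2\cdot 2+1)\cdot 2 + 1 = 5\cdot 2 + 1 = 11$.

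For part (ii), I would first extract the upper bound $\alpha(W_5^{\Box 2}\Box K_3)\le 29$ from the proof of Theorem~\ref{thm:higher_wheel_best_bound}. That argument, built on Lemma~\ref{lemma:ind_set_higher_power_large_center} and Proposition~\ref{prop:higher_wheels_small_independence}, is valid for every $t\ge 2$ and shows that any independent set $S\subseteq V(W_{2t+1}^{\Box 2}\Box K_3)$ satisfies $|S|\le \max\{4t^2+4t,\,4t^2+6t+\tfrac{2}{t}\}$; the hypothesis $t\ge 3$ enters there only to round the second term down to an integer. Specializing to $t=2$ gives $|S|\le\max\{24,29\}=29$, and since $29$ is already an integer nothing is lost. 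The matching lower bound is then obtained by exhibiting a single independent set of size $29$, either a set patterned on the constructions in Proposition~\ref{prop:higher_wheels_small_independence} and Proposition~\ref{prop:higher_wheel_second_power}, or the optimal solution returned by Program~\ref{prog:alpha_QCIP} run with $G=W_5^{\Box 2}$ and $H=K_3$.

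Part (iii) is the only statement that genuinely requires the computer. Here I would solve Program~\ref{prog:alpha_QCIP} with $G=W_5^{\Box 2}$ and $H=W_5$, equivalently applying a standard independence-number routine to the $216$-vertex graph $W_5^{\Box 3}$, and certify the optimum value $58$. The solver's bound supplies $\alpha(W_5^{\Box 3})\le 58$, while an explicit $58$-element independent set recovered from its solution, whose independence is checked directly against the edge set of $W_5^{\Box 3}$, supplies the reverse inequality.

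Because all three graphs are small and highly structured, I expect no serious computational obstacle; a modern integer-programming solver dispatches a $216$-vertex independence computation easily, and the $108$-vertex graph $W_5^{\Box 2}\Box K_3$ is smaller still. The only genuine care lies in the lower bounds for (ii) and (iii): verifying that an exhibited vertex set is truly independent in a Cartesian product is routine but error-prone bookkeeping over the product's adjacency relation, so this verification, rather than any hard reasoning, is where I would concentrate attention.
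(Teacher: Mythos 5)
Your proposal is correct, but it takes a genuinely different and more self-contained route than the paper, whose entire ``proof'' of this lemma is the surrounding remark that any standard tool (e.g.\ SageMath) verifies all three values, with the verification left to the reader (the size-$29$ witness for part (ii) is displayed in Appendix~A). Your part (i) is exactly the $t=2$ case of Proposition~\ref{prop:higher_wheel_second_power}, which is stated for all $t\ge 2$, so no computation is needed there. Your part (ii) upper bound is the sharpest observation in the proposal: since Theorem~\ref{thm:higher_wheel_best_bound} is only asserted for $t\ge 3$, you are rightly appealing to its \emph{proof} rather than its statement, and your diagnosis is accurate --- the ingredients (Lemma~\ref{lemma:ind_set_higher_power_large_center}, which needs only $k\le t$, and Proposition~\ref{prop:higher_wheels_small_independence}) hold for $t\ge 2$, the division by $t-1$ is harmless at $t=2$, and the hypothesis $t\ge 3$ enters only in rounding $4t^2+6t+\tfrac{2}{t}$ down to $4t^2+6t$; at $t=2$ the quantity $4t^2+6t+\tfrac{2}{t}=29$ is already an integer. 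It is a happy accident that this general bound is tight precisely at $t=2$ (one checks $4t^2+6t+\tfrac{2}{t}=4t^2+5t+3$ forces $t\in\{1,2\}$, and for $t\ge 3$ the bound exceeds the conjectured true value $4t^2+5t+3$), which is what lets your argument close once a size-$29$ independent set is exhibited. What each approach buys: the paper's treatment is uniform and one line long, while yours shrinks the computer-trusted surface of the lemma to a single $216$-vertex independence computation for part (iii) plus two certificate checks of explicit independent sets --- a meaningful gain, since this lemma underpins the computer-assisted proof of Theorem~\ref{thm:5_wheel_thirdpower_triangle}. The only soft spot is your suggestion that a $29$-set might be ``patterned on'' the constructions in Propositions~\ref{prop:higher_wheels_small_independence} and~\ref{prop:higher_wheel_second_power}; no such clean pattern is given in the paper (its witness is a figure), so in practice you should plan on extracting and verifying the solver's certificate, as your own closing paragraph anticipates.
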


The following lemmas are less straightforward.
Also, the resulting LPs require a stronger solver to finish quickly. All of the assertions in the following lemma can be verified in a reasonable time using our implementations.

Let $I$ be an independent set in $W_{5}^{\Box 3}$. For $i\in \{*\}\cup \mathbb{Z}_5$, define 
\[ I_i = I\cap V(H_i^{(3)}) = \{(a_1, a_2, w_i) \in I: a_1, a_2 \in V(W_5)\}.\]
Define $\mathbf{i} = (|I_*|, |I_0|, |I_1|, |I_2|, |I_3|, |I_4|)$. 

\begin{figure}[!h]
    \centering
    \includegraphics[scale = 0.75]{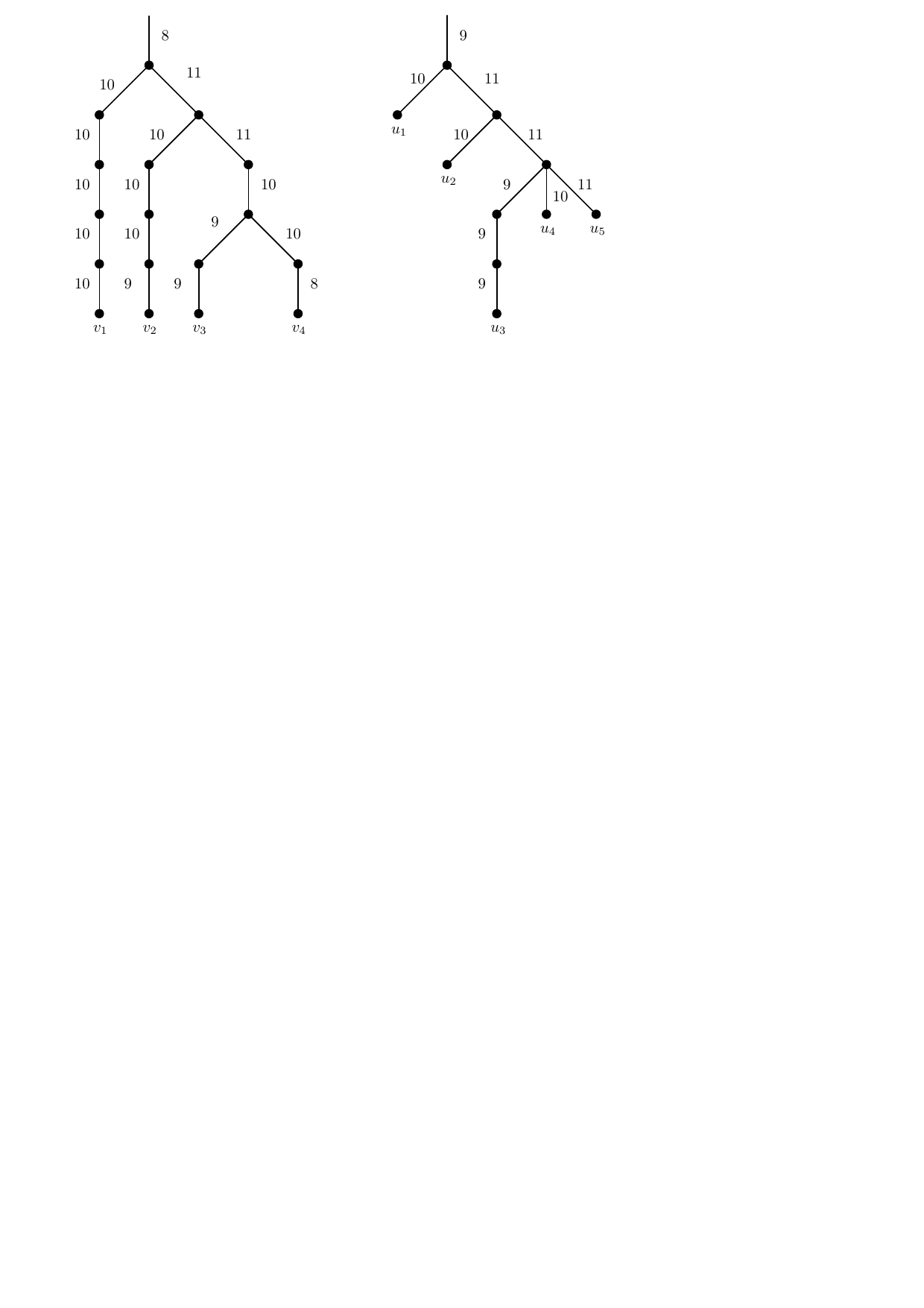}
    \caption{The branch and bound tree for assertion $(v)$ in Lemma~\ref{lemma:third_power_center_independence}.}
    \label{fig:BranchBound}
\end{figure}

\begin{figure}[!h]
    \centering
    \includegraphics[scale = 0.75]{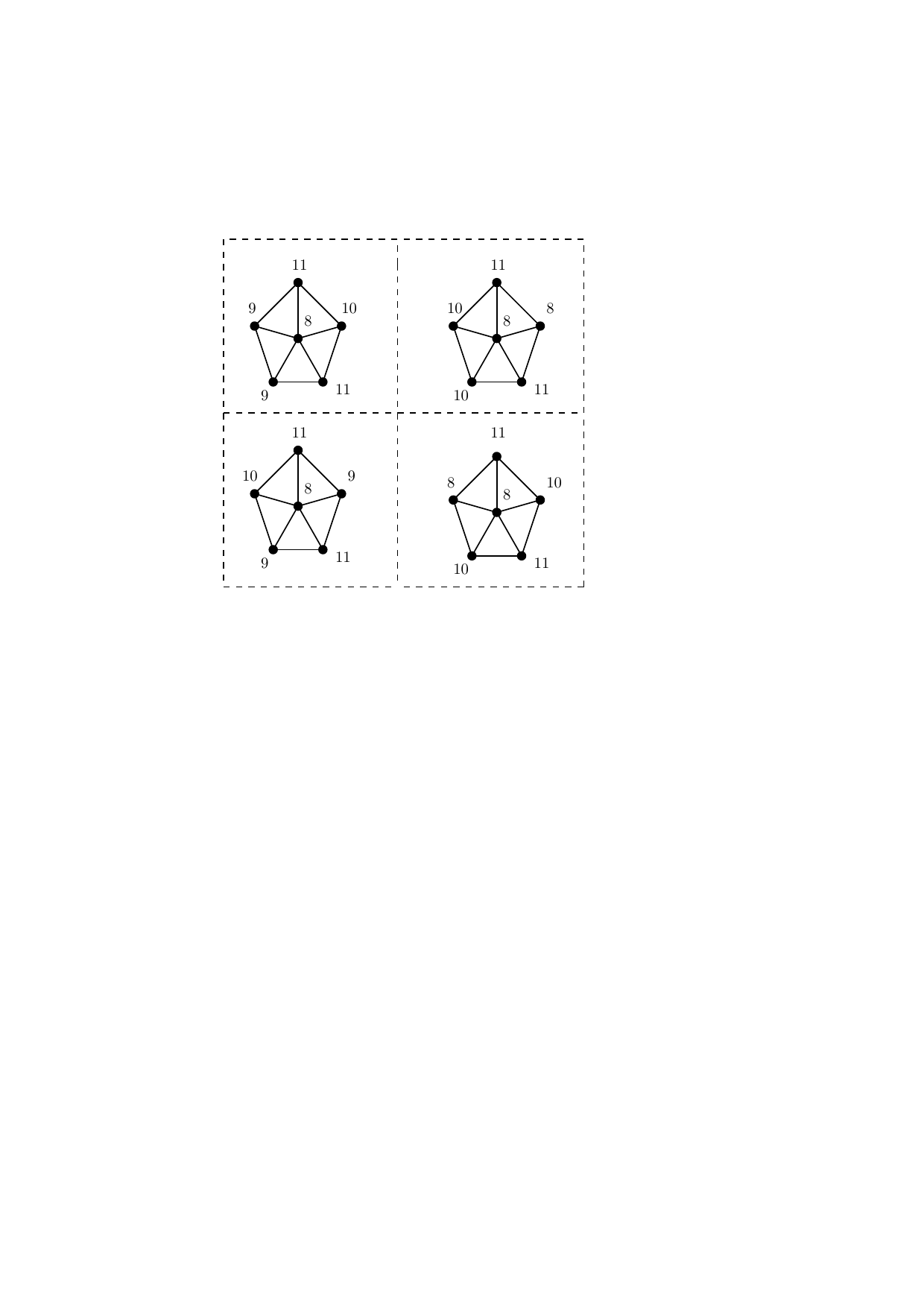}
    \caption{The possible cases when showing solutions $v_3,v_4$ solutions are infeasible in 
    assertion $(v)$ of Lemma~\ref{lemma:third_power_center_independence}.}
    \label{fig:62case}
\end{figure}

\begin{lemma}\label{lemma:third_power_center_independence}
Let $I$ be an independent set in $W_{5}^{\Box 3}$. 
    \begin{enumerate}[$(i)$]
        \item If $|I_*| = 11$, then $|I| \le 54$. 
        \item If $|I_*| = 10$, then $|I| \le 55$. 
        \item If $|I_*| = 9$ and if $|I_i| = 10$ for some $0\le i\le 4$, then $|I| \le 57$. 
        \item If $|I_*| \le 7$, then $|I| \le 57$. 
        \item If $|I| = 58$, then, without loss of generality, we have
        \[  \mathbf{i} \in \{(9,11, 9, 11, 9, 9), (8,11, 9, 10, 10, 10)\}.\]
    \end{enumerate}
\end{lemma}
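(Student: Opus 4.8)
The plan is to treat $W_5^{\Box 3}$ as six copies of $W_5^{\Box 2}$ glued along the third coordinate, one slice $H_i^{(3)}$ for each $i\in\{*\}\cup\mathbb{Z}_5$, with the cross-slice constraints dictated by adjacency in the third factor $W_5$: the hub slice $H_*^{(3)}$ meets every other slice, while $H_i^{(3)}$ and $H_j^{(3)}$ interact precisely when $w_iw_j\in E(C_5)$. The engine of the whole argument is a single structural fact that I would record first: \emph{every independent set of size $11$ in $W_5^{\Box 2}$ contains the center $(w_*,w_*)$}. This follows from Lemma~\ref{lemma:ind_set_higher_power_large_center} with $t=2$, which gives $|I|\le 11-k$ for $k=|I_*\setminus\{(w_*,w_*)\}|$; so $|I|=11$ forces $k=0$, hence $I_*\in\{\emptyset,\{(w_*,w_*)\}\}$, and the case $I_*=\emptyset$ is excluded because then $I\subseteq W_5\Box C_5$, which satisfies $\alpha(W_5\Box C_5)\le\min\{6\cdot 2,\,5\cdot 2\}=10$ by the product bound of Section~\ref{sec:Notation}. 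Transporting this to $W_5^{\Box 3}$, a slice with $|I_i|=11$ must contain $(w_*,w_*,w_i)$; since $(w_*,w_*,w_i)$ and $(w_*,w_*,w_{i+1})$ are adjacent in $W_5^{\Box 3}$, two cyclically adjacent slices cannot both have size $11$. Equivalently, the indices of the size-$11$ slices form an independent set of the third-coordinate $C_5$, so there are at most two, and if two they are non-adjacent.

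Next I would dispatch $(i)$--$(iv)$ by integer programming, using Program~\ref{prog:alpha_QCIP} with $G=W_5^{\Box 2}$ and $H=W_5$, so that the variable vector $x_{w_i}$ is the indicator of $I_i$ and $\mathbf{1}^Tx_{w_i}=|I_i|$. For $(i)$ and $(ii)$ I add $\mathbf{1}^Tx_{w_*}=11$ (respectively $=10$), maximize $\sum_i\mathbf{1}^Tx_{w_i}$, and read off that the optimum meets the claimed bound $54$ (respectively $55$); equivalently one certifies that forcing $|I|$ past the bound is infeasible. For $(iii)$, rotational symmetry of the third-coordinate $C_5$ lets me assume $i=0$, add $\mathbf{1}^Tx_{w_*}=9$ and $\mathbf{1}^Tx_{w_0}=10$, and verify the optimum is at most $57$. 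For $(iv)$ I add $\mathbf{1}^Tx_{w_*}\le 7$ and certify that $|I|\ge 58$ is infeasible. Throughout, the structural fact above justifies the symmetry reductions and shrinks the feasible region.

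Finally, for $(v)$ I would run a branch-and-bound over profiles $\mathbf{i}$, as in Figure~\ref{fig:BranchBound}. Assertions $(i)$, $(ii)$, $(iv)$ force $|I_*|\in\{8,9\}$ once $|I|=58$. If $|I_*|=9$, then $\sum_i|I_i|=49$ and $(iii)$ forbids any slice of size $10$, so each $|I_i|\in\{0,\dots,9\}\cup\{11\}$; the bound $5\cdot 9=45<49$ forces at least two size-$11$ slices, the adjacency obstruction caps them at two non-adjacent ones, and the remaining three slices must each equal $9$, giving $(9,11,9,11,9,9)$ up to $\mathrm{Aut}(W_5^{\Box 3})$. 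If $|I_*|=8$, then $\sum_i|I_i|=50$ with at most two non-adjacent size-$11$ slices, leaving only the multisets $\{10^5\}$, $\{11,10^3,9\}$, $\{11^2,10^2,8\}$, $\{11^2,10,9^2\}$; I would retain $\{11,10^3,9\}$, which yields $(8,11,9,10,10,10)$, and eliminate the other three by fixing the corresponding profile constraints in Program~\ref{prog:alpha_QCIP} and certifying infeasibility in Gurobi, with the auxiliary case split of Figure~\ref{fig:62case} handling the two-eleven branches.

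I expect the main obstacle to be this $|I_*|=8$ elimination in $(v)$, compounded by the sheer size of the programs (the product has $216$ vertices). The profile $(8,10^5)$ is especially stubborn: it is excluded neither by the adjacency obstruction nor by the product bound $\sum_i|I_i|\le\alpha(W_5^{\Box 2}\Box C_5)\le 55$, so it genuinely requires an infeasibility certificate. The delicate work is to organize the branches finely enough (the $v_3,v_4$ of Figure~\ref{fig:62case}) that Gurobi terminates, while checking that every ``without loss of generality'' is a bona fide orbit reduction under $\mathrm{Aut}(W_5^{\Box 3})$.
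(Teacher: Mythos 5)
Your proposal is correct and follows essentially the same route as the paper: assertions $(i)$--$(iv)$ via the sliced IP formulation of Program~\ref{prog:alpha_QCIP} with added cardinality constraints, and $(v)$ via branch-and-bound over slice-size profiles, killing the $10$-slices with $(iii)$, forbidding adjacent $11$-slices via Lemma~\ref{lemma:ind_set_higher_power_large_center} and Proposition~\ref{prop:higher_wheel_second_power}, and dispatching the residual $|I_*|=8$ profiles (including the stubborn $(8,10,10,10,10,10)$ and the two-eleven cases split as in Figure~\ref{fig:62case}) with Gurobi infeasibility certificates. Your one refinement --- showing every size-$11$ independent set in $W_5^{\Box 2}$ must contain $(w_*,w_*)$, rather than merely that any two such sets intersect --- is a slightly sharper statement of the same obstruction the paper uses, derived from the same two results.
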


\begin{proof}
    Assertion $(i)$ can be verified \href{https://github.com/Shivaramkratos/Ultimate_Independence_ratio_Python_Gurobi_code/blob/Lemma_6.2/6_2_i.py}{here},
$(ii)$ can be verified \href{https://github.com/Shivaramkratos/Ultimate_Independence_ratio_Python_Gurobi_code/blob/Lemma_6.2/6_2_ii.py}{here},
$(iii)$ can be verified \href{https://github.com/Shivaramkratos/Ultimate_Independence_ratio_Python_Gurobi_code/blob/Lemma_6.2/6_2_iii.py}{here},
and $(iv)$ can be verified \href{https://github.com/Shivaramkratos/Ultimate_Independence_ratio_Python_Gurobi_code/blob/Lemma_6.2/6_2_iv.py}{here}. 
Meanwhile, assertion $(v)$ requires a combinatorial justification, supported by multiple computations, to be verified.
We provide this proof now. 

Let $I$ be such that $|I| = 58$. Using $(i)$ and $(ii)$, we see that $|I_*|\le 9$, and using $(iv)$, we have $|I_*|\ge 8$. 
Let $\{J_0,\dots, J_4\} = \{I_0,\dots, I_4\}$ be a relabelling of the remaining sets $I_i$ such that
$|J_0|\geq |J_1| \geq |J_2|\geq |J_3| \geq |J_4|$.
Proceeding by a branch and bounding strategy, we
determine the resulting sizes of the sets $J_0,\dots, J_4$ in order,
observing that at all times $|J_0| \geq \frac{1}{5}(58 - |I_*|)$ and for all $i\geq 1$
$|J_i| \geq \frac{1}{5-i}(58 - |I_*|-\sum_{j=0}^{i-1}|J_j|)$.
The resulting decision tree is given in Figure~\ref{fig:BranchBound},
where edge labels determine the size of $J_i$
in order by index, beginning with $J_0$,
where the first edge of the tree determines if $|I_*| = 8$ or $9$.
Each leaf of the tree is
earliest point where we can decide the resulting set $\mathbf{i}$ is feasible or infeasible.

We claim that only leaves $v_2,u_3$ represent feasible solutions.
Thus, we are required to demonstrate that all other leaves represent infeasible solutions.
We begin by noting that assertion $(iii)$
implies no solution where there exists a $J_i$ with $|J_i| = 10$ is feasible.
Hence, solutions $u_1,u_2,$ and $u_4$ are infeasible.
Observe that by Lemma~\ref{lemma:ind_set_higher_power_large_center} and Proposition~\ref{prop:higher_wheel_second_power}
all distinct independent sets in $W^{\Box 2}_5$ of size $11$ intersect.
Thus, for all $i$, with addition in the index mod $5$,
no sets $I_i$ and $I_{i+1}$ can both satisfy $|I_i| = |I_{i+1}|= 11$.
It follows that at most $2$ sets $I_i$ have size $11$,
implying solution $u_5$ is infeasible.

It remains to be shown that solutions $v_1,v_3,$ and $v_4$ are infeasible.
This requires some more casework when determining how the relabelling
$J_0,J_1,J_2,J_3,J_4$ compares to the original labelling $I_0,I_1,I_2,I_3,I_4$,
in addition to further computer assistance.
To start, notice that the solution $v_1$ does not require casework, 
since all sets $J_i$ have size $10$.
The infeasibility of $v_1$ can be verified  
\href{https://github.com/Shivaramkratos/Ultimate_Independence_ratio_Python_Gurobi_code/blob/Lemma_6.2/6_2_v_v1.py}{here}.

Consider solution $v_3$ and $v_4$
sets $J_0$ and $J_1$ are size $11$, so without loss of generality
Lemma~\ref{lemma:ind_set_higher_power_large_center} and Proposition~\ref{prop:higher_wheel_second_power}
imply $J_0 = I_0$ and $J_1 = I_2$.
Then up to symmetries of $W_5$, $J_2 = I_1$ or $J_2 = I_4$.
This leads to two cases for $v_3$, see the left of Figure~\ref{fig:62case},
and two cases for $v_4$, see the right of Figure~\ref{fig:62case}.
The infeasibility of $v_3$ when $J_2 = I_1$ (see the top left)
can be verified  \href{https://github.com/Shivaramkratos/Ultimate_Independence_ratio_Python_Gurobi_code/blob/Lemma_6.2/6_2_v_v3_1.py}{here},
while the 
infeasibility of $v_3$ when $J_2 = I_4$ (see the bottom left)
can be verified  \href{https://github.com/Shivaramkratos/Ultimate_Independence_ratio_Python_Gurobi_code/blob/Lemma_6.2/6_2_v_v3_2.py}{here},
The infeasibility of $v_4$ when $J_4 = I_1$ (see the top right)
can be verified  \href{https://github.com/Shivaramkratos/Ultimate_Independence_ratio_Python_Gurobi_code/blob/Lemma_6.2/6_2_v_v4_1.py}{here},
while the 
infeasibility of $v_4$ when $J_2 = I_4$ (see the bottom right)
can be verified \href{https://github.com/Shivaramkratos/Ultimate_Independence_ratio_Python_Gurobi_code/blob/Lemma_6.2/6_2_v_v4_2.py}{here}.  This completes the proof.
\end{proof}

\begin{figure}
    \centering
    \includegraphics[scale = 0.75]{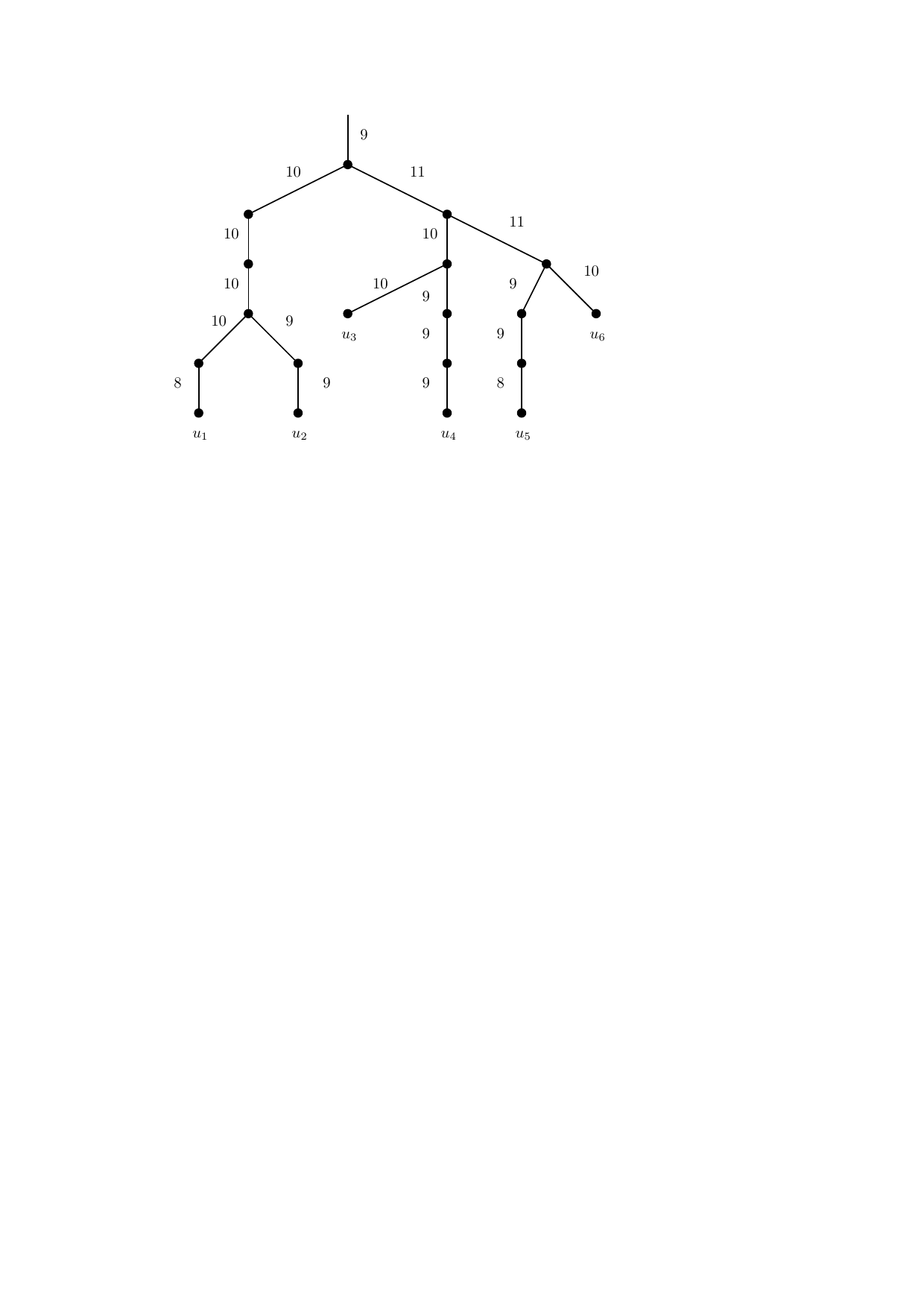}
    \caption{The branch and bound tree for independent sets of size $57$ in $W^{\Box 3}_5$
    where $|I_*| = 9$, as mentioned in
    in Lemma~\ref{lemma:57 center 9}.}
    \label{fig:BB}
\end{figure}

\begin{lemma}\label{lemma:57 center 9}
   If $I$ is an independent set in  
   $W_{5}^{\Box 3}$, such that $|I| = 57$ and $|I_*| = 9$, then, without loss of generality, we have  
   \[
    \mathbf{i} \in \{(9, 11, 8, 11, 9, 9), (9, 11, 9, 11, 9, 8), (9, 11, 9, 10, 9, 9), (9, 9, 10, 9, 10, 10)\}.
   \]
\end{lemma}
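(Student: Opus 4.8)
The plan is to mirror the branch-and-bound argument used for assertion $(v)$ of Lemma~\ref{lemma:third_power_center_independence}, now with the total fixed at $|I| = 57$ and the center size fixed at $|I_*| = 9$. First I would record the basic constraints: since $H_i^{(3)} \cong W_5^{\Box 2}$, each rim slice satisfies $|I_i| \le \alpha(W_5^{\Box 2}) = 11$ by Lemma~\ref{lemma:5_wheel_small_indendence_no}$(i)$ (equivalently Proposition~\ref{prop:higher_wheel_second_power}), and $\sum_{i \in \mathbb{Z}_5} |I_i| = |I| - |I_*| = 48$. I would then reuse the structural fact established in the proof of Lemma~\ref{lemma:third_power_center_independence}$(v)$: distinct size-$11$ independent sets in $W_5^{\Box 2}$ pairwise intersect, while consecutive slices $I_i, I_{i+1}$ have disjoint supports in the first two coordinates. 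Hence the indices $i$ with $|I_i| = 11$ form an independent set of $C_5$, so at most two slices have size $11$ and any two such slices are non-adjacent.

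Next I would run the same style of branch-and-bound. Relabelling the slice sizes as $J_0 \ge J_1 \ge \cdots \ge J_4$ with $\sum_j J_j = 48$, and using $J_0 \ge \lceil 48/5 \rceil = 10$ together with $J_i \ge \lceil (48 - \sum_{j<i} J_j)/(5-i) \rceil$, the bound $J_i \le 11$, and the ``at most two $11$'s'' cap, the decision tree of Figure~\ref{fig:BB} enumerates the finitely many candidate sorted profiles. Among these, the only multisets consistent with the claimed conclusion are $\{11,11,9,9,8\}$, $\{11,10,9,9,9\}$, and $\{10,10,10,9,9\}$; every other leaf, namely $\{11,11,10,10,6\}$, $\{11,11,10,9,7\}$, $\{11,11,10,8,8\}$, $\{11,10,10,10,7\}$, $\{11,10,10,9,8\}$, and $\{10,10,10,10,8\}$, must be shown infeasible.

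To finish, I would resolve each surviving leaf by a symmetry reduction followed by an integer-programming check. Using the $D_5$ action of $\mathrm{Aut}(W_5)$ on the slice indices, I can fix the positions of any size-$11$ slices at two non-adjacent indices (say $I_0$ and $I_2$, justified exactly as the placement $J_0 = I_0$, $J_1 = I_2$ in Lemma~\ref{lemma:third_power_center_independence}$(v)$), and then use the reflection stabilizing these positions to merge equivalent placements of the remaining sizes. For $\{11,11,9,9,8\}$ this leaves precisely the two arrangements $(9,11,8,11,9,9)$ and $(9,11,9,11,9,8)$, both of which appear in the conclusion, so nothing further is needed. For $\{11,10,9,9,9\}$ the two inequivalent placements put the $10$ adjacent to the $11$ or at distance two, and for $\{10,10,10,9,9\}$ the two $9$'s are adjacent or at distance two; in each case the conclusion lists only the distance-two arrangement. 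For every multiset to be excluded, and for every non-listed arrangement of a surviving multiset, I would enumerate its inequivalent cyclic placements on $C_5$, add the slice-size constraints $\mathbf{1}^T x_{w_i} = |I_i|$ to Program~\ref{prog:alpha_QCIP} for $W_5^{\Box 3}$, and certify infeasibility in Gurobi, exactly as was done for the leaves $v_1, v_3, v_4$ in Lemma~\ref{lemma:third_power_center_independence}$(v)$. What survives is exactly the four listed profiles.

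The hard part will be the arrangement bookkeeping in the last step: translating each sorted profile back into a cyclic word on $C_5$, correctly using the intersection property to pin the size-$11$ slices and the residual reflection symmetry to collapse redundant cases, so that the finitely many Gurobi runs provably cover every possibility. As in Lemma~\ref{lemma:third_power_center_independence}$(v)$, the combinatorial danger is an overlooked inequivalent placement rather than any single computation. Extra care is needed because, unlike the $|I| = 58$ case, slices of size $10$ are now permitted: Lemma~\ref{lemma:third_power_center_independence}$(iii)$ is tight at $57$, so size-$10$ slices cannot be discarded a priori and each must be handled by the solver.
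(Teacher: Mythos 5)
Your proposal is correct and shares the paper's skeleton (branch-and-bound on the sorted slice sizes as in Lemma~\ref{lemma:third_power_center_independence}$(v)$, symmetry reduction of cyclic arrangements, IP infeasibility checks), and your enumeration is complete: the nine multisets you list are exactly the weakly decreasing sequences summing to $48$ with entries at most $11$ and at most two $11$'s. Where you diverge from the paper is in the pruning device. The paper exploits the hypothesis $|I_*|=9$ directly: since $\{w_*,w_i,w_{i+1}\}$ is a triangle in $W_5$, the union $I_*\cup I_i\cup I_{i+1}$ is an independent set in $W_5^{\Box 2}\Box K_3$, so Lemma~\ref{lemma:5_wheel_small_indendence_no}$(ii)$ gives $|I_i|+|I_{i+1}|\le 29-9=20$ for every consecutive pair. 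This single inequality does almost all the work: it subsumes your ``at most two non-adjacent $11$'s'' fact (since $11+11>20$), it eliminates \emph{combinatorially} every multiset containing two $11$'s together with a $10$, the multiset $\{11,10,10,10,7\}$, and the arrangement of $\{11,10,9,9,9\}$ with the $10$ adjacent to the $11$, and it pins down the unique admissible arrangements (up to rotation) for the surviving leaves $u_4,u_5$ in Figure~\ref{fig:BB}. The paper is then left with only a couple of solver runs (the leaf $u_1$ and the arrangement $(9,9,9,10,10,10)$ of $u_2$), whereas your plan defers roughly a dozen arrangement cases to Gurobi. Your route is valid --- every case you defer is in fact infeasible, so the runs would succeed, and you correctly flag the arrangement bookkeeping as the main hazard --- but it trades a one-line counting bound for substantially more computation and more opportunities to overlook an inequivalent placement.
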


\begin{proof} Using the branch-and-bound technique as described in the proof of Lemma \ref{lemma:third_power_center_independence}, we can construct the decision tree for $I$ as shown in Figure \ref{fig:BB}.  

Since $\alpha(W_5^{\Box 2}\Box K_3) = 29$, we immediately see that any feasible solution with $|I_*| = 9$ cannot have sets $I_i,I_{i+1}$ (addition mod $5$)
such that $|I_i| + |I_{i+1}|>20$.
This shows that the solutions $u_3$ and $u_6$ are infeasible. 

The solution $u_1$ is shown to be infeasible \href{https://github.com/Shivaramkratos/Ultimate_Independence_ratio_Python_Gurobi_code/blob/Lemma_6.3/6_3_u1.py}{here}. In the solution $u_2$, there are two possibilities: either $\mathbf{i} = (9, 9, 9, 10, 10, 10)$ or $\mathbf{i} = (9, 9, 10, 9, 10, 10)$. The former case is shown to be infeasible \href{https://github.com/Shivaramkratos/Ultimate_Independence_ratio_Python_Gurobi_code/blob/Lemma_6.3/6_3_u2_2.py}{here}, the latter case is possible. The cases $u_4$ and $u_5$ are feasible, but we note that for each of $u_4$ and $u_5$, there is a unique way to arrange set sizes, up to rotation symmetry, such that 
there are no sets $I_i,I_{i+1}$ (addition mod $5$)
such that $|I_i| + |I_{i+1}|>20$.
This completes the proof.
\end{proof}

Let $V(K_3) = \{w_i: i\in \mathbb{Z}_3\}$ and $S$ denote an independent set in $W_{5}^{\Box 3}\Box K_3$. For $i\in \mathbb{Z}_3$ and $j\in \{*\}\cup \mathbb{Z}_5$, let
\[S_i = \{(a_1, a_2, a_3, w_i) \in S: a_1, a_2, a_3\in V(W_5)\} \text{ and } S_{i, j} =  \{(a_1, a_2, w_j, w_i) \in S: a_1, a_2\in V(W_5)\}.\] Without loss of generality, let $|S_0|\ge |S_1|\ge |S_2|$. Denote by $\mathbf{s} = (|S_0|, |S_1|, |S_2|)$. 

\begin{lemma}\label{lemma:thirdpower_wit5h_K3_infeasible_sets}
Let $S$ be an independent set in $W_{5}^{\Box 3}\Box K_3$. Then the following hold.
\begin{enumerate}[$(i)$]
    \item There is no $S$ with $\mathbf{s} \in \{(58, 57, 55), (58, 56, 56)\}$.
    \item There is no $S$ with $\mathbf{s} = (57, 57, 57)$. 
    \item There is no $S$ with $\mathbf{s} = (58, 58, 54)$ and $(|S_{0, *}|, |S_{1, *}|, |S_{2, *}|) \in \{(9,9, 11), (9,8,10) ,(9,9,9)\}$. 
\end{enumerate}
\end{lemma}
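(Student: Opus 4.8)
The plan is to combine two structural pillars with targeted infeasibility certificates from Program~\ref{prog:alpha_QCIP}. First, since $K_3$ is complete, the three projections $S_0,S_1,S_2$ are \emph{pairwise disjoint} independent sets in $W_5^{\Box 3}$, each of size at most $\alpha(W_5^{\Box 3})=58$ by Lemma~\ref{lemma:5_wheel_small_indendence_no}. Second, for each fibre index $j\in\{*\}\cup\mathbb{Z}_5$ the vertices of $S$ lying over $H_j^{(3)}$ form an independent set of the induced subgraph $H_j^{(3)}\Box K_3\cong W_5^{\Box 2}\Box K_3$, and hence (again by Lemma~\ref{lemma:5_wheel_small_indendence_no})
\[
|S_{0,j}|+|S_{1,j}|+|S_{2,j}|\le \alpha(W_5^{\Box 2}\Box K_3)=29 .
\]
Summing over the six fibres bounds $|S|$ by $174$ and, more importantly, controls how much slack each fibre may carry. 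I would record two sharper tools: (a) by Lemma~\ref{lemma:ind_set_higher_power_large_center} and Proposition~\ref{prop:higher_wheel_second_power}, every size-$11$ independent set of a copy of $W_5^{\Box 2}$ contains that copy's centre, so disjointness forbids two of the parts $S_{i,j}$ from simultaneously having size $11$ in the same fibre; and (b) by Lemma~\ref{lemma:third_power_center_independence}$(i)$--$(ii)$, any component of size at least $56$ satisfies $|S_{i,*}|\le 9$.

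With these in hand each part becomes a finite case check. For $(ii)$, where $\mathbf{s}=(57,57,57)$ and $|S|=171$, tool (b) forces $\sum_i|S_{i,*}|\le 27$, so the $*$-fibre carries deficiency at least $2$ out of the total deficiency $174-171=3$; hence at least four of the five cycle-fibres must be \emph{saturated} at $29$, which together with tool (a) and the profile lists of Lemmas~\ref{lemma:third_power_center_independence}$(v)$ and~\ref{lemma:57 center 9} rigidly restricts the admissible component profiles. For $(i)$ and $(iii)$, where one or two components have the maximum size $58$, Lemma~\ref{lemma:third_power_center_independence}$(v)$ forces each such component, up to the symmetries of $W_5$, into one of the profiles $(9,11,9,11,9,9)$ or $(8,11,9,10,10,10)$; in $(iii)$ the prescribed centre counts $(|S_{0,*}|,|S_{1,*}|,|S_{2,*}|)$ then select which of these two profiles occur and saturate the $*$-fibre, leaving only a handful of ways the profiles of $S_0,S_1,S_2$ can be superimposed disjointly. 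For every surviving combination I would instantiate Program~\ref{prog:alpha_QCIP} with $G=W_5^{\Box 3}$ and $H=K_3$, add the equality constraints $\mathbf 1^{T}x_{w_i}=|S_i|$ (and, in $(iii)$, the constraints fixing the number of coordinates of $x_{w_i}$ supported on $H_*^{(3)}$ to $|S_{i,*}|$), and let Gurobi certify infeasibility, linking one file per case.

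The delicate part is the bookkeeping that converts the global size vector $\mathbf{s}$ into a \emph{bounded} list of fibrewise profiles and then reconciles the relabellings $J_i\mapsto I_i$ used in Lemmas~\ref{lemma:third_power_center_independence} and~\ref{lemma:57 center 9} across all three components at once: a single automorphism of $W_5^{\Box 3}\Box K_3$ can normalize one component's profile but not all three independently, so two profiles that are compatible fibre by fibre may still be jointly unrealizable for a global reason. This is precisely where the integer program is indispensable, since it decides realizability rather than merely the feasibility of the fibre inequalities. I therefore expect the main obstacle to be keeping the number and size of the IP instances small enough for Gurobi to \emph{close} them (prove infeasibility, not just fail to find a solution); cutting the casework with tools (a)--(b) and the profile characterizations of Lemmas~\ref{lemma:third_power_center_independence} and~\ref{lemma:57 center 9} is exactly what makes the remaining certificates tractable.
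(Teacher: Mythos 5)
Your proposal is correct and follows essentially the same route as the paper: bound the centre slices via Lemma~\ref{lemma:third_power_center_independence}$(i)$--$(ii)$, use the fibrewise bound $\alpha(W_5^{\Box 2}\Box K_3)=29$ and the fact that size-$11$ independent sets in $W_5^{\Box 2}$ pairwise intersect to pin down centre counts and forbid two $11$-fibres from coinciding, invoke the profile characterizations of Lemmas~\ref{lemma:third_power_center_independence}$(v)$ and~\ref{lemma:57 center 9} to reduce to finitely many cases, and close each case with a Gurobi infeasibility certificate for Program~\ref{prog:alpha_QCIP} augmented by fibre-size constraints --- exactly the paper's strategy, including your correct observation that only one component's profile can be normalized by symmetry while joint realizability must be left to the IP. The only blemish is the harmless aside that the prescribed centre counts in $(iii)$ ``saturate the $*$-fibre'': this holds for $(9,9,11)$ but not for $(9,8,10)$ or $(9,9,9)$, and nothing in your argument depends on it.
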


\begin{proof} In this proof, we repeatedly use Lemma \ref{lemma:third_power_center_independence} $(v)$ without mention.

The infeasibility of the case $\mathbf{s} = (58, 56, 56)$ and $(|S_{0,*}|,|S_{0,0}|,\dots,|S_{0,4}|) = (9,11, 9, 11, 9, 9)$, 
    can be verified {\href{https://github.com/Shivaramkratos/Ultimate_Independence_ratio_Python_Gurobi_code/blob/Lemma_6.4/W5_K3_170_58-56-56-1.py}{here}}.
    The infeasibility of the case $\mathbf{s} = (58, 56, 56)$ and $(|S_{0,*}|,|S_{0,0}|,\dots,|S_{0,4}|) = (8,11, 9, 10, 10, 10)$, 
    can be verified {\href{https://github.com/Shivaramkratos/Ultimate_Independence_ratio_Python_Gurobi_code/blob/Lemma_6.4/W5_K3_170_58-56-56-2.py}{here}}.

    The infeasibility of the case $\mathbf{s} = (58, 57, 55)$ and $(|S_{0,*}|,|S_{0,0}|,\dots,|S_{0,4}|) = (9,11, 9, 11, 9, 9)$, 
    can be verified \href{https://github.com/Shivaramkratos/Ultimate_Independence_ratio_Python_Gurobi_code/blob/Lemma_6.4/W5_K3_170_58-57-55-1.py}{here}.
    The infeasibility of the case $\mathbf{s} = (58, 57, 55)$ and $(|S_{0,*}|,|S_{0,0}|,\dots,|S_{0,4}|) = (8,11, 9, 10, 10, 10)$, 
    can be verified {\href{https://github.com/Shivaramkratos/Ultimate_Independence_ratio_Python_Gurobi_code/blob/Lemma_6.4/W5_K3_170_58-57-55-2.py}{here}}.
    This proves $(i)$.

Suppose $\mathbf{s} = (57, 57, 57)$. Using Lemma \ref{lemma:third_power_center_independence}, we see that $|S_{i, *}|\le 9$ for all $i\in \{0,1,2\}$. Note that
\begin{align*}
        171 = |S| & = (|S_{0, *}| + |S_{1, *}| + |S_{2, *}|) + \sum_{j = 0}^4 \left(|S_{0, j}| + |S_{1, j}| + |S_{2, j}|\right) \\
        & \le (|S_{0, *}| + |S_{1, *}| + |S_{2, *}|) + 5 \cdot 29.
\end{align*}
This implies $(|S_{0, *}| + |S_{1, *}| + |S_{2, *}|)\ge 26$. Assume without loss of generality that $|S_{0, *}| \ge |S_{1, *}|\ge |S_{2, *}|$. Then 
\[(|S_{0, *}|,|S_{1, *}|, |S_{2, *}|) \in \{(9,9,9), (9,9,8)\}.\]
Without loss of generality $|S_{0, *}| = 9$.
By Lemma~\ref{lemma:57 center 9} we have 
\[(|S_{0,*}|,|S_{0,0}|,\dots,|S_{0,4}|) \in \{(9, 11, 8, 11, 9, 9), (9, 11, 9, 11, 9, 8), (9, 11, 9, 10, 9, 9), (9, 9, 10, 9, 10, 10)\},\]
First, letting $(|S_{0, *}|, |S_{1, *}|, |S_{2, *}|) = (9,9,9)$, then we verify each case of $(|S_{0,*}|,|S_{0,0}|,\dots,|S_{0,4}|)$
\href{https://github.com/Shivaramkratos/Ultimate_Independence_ratio_Python_Gurobi_code/blob/Lemma_6.4/W5_K3_171_57-57-57-9-9-9-1.py}{here},
\href{https://github.com/Shivaramkratos/Ultimate_Independence_ratio_Python_Gurobi_code/blob/Lemma_6.4/W5_K3_171_57-57-57-9-9-9-2.py}{here},
\href{https://github.com/Shivaramkratos/Ultimate_Independence_ratio_Python_Gurobi_code/blob/Lemma_6.4/W5_K3_171_57-57-57-9-9-9-3.py}{here}, and
\href{https://github.com/Shivaramkratos/Ultimate_Independence_ratio_Python_Gurobi_code/blob/Lemma_6.4/W5_K3_171_57-57-57-9-9-9-4.py}{here}, respectively. 
Second, we let $(|S_{0, *}|, |S_{1, *}|, |S_{2, *}|) = (9,9,8)$
then we verify each case of $(|S_{0,*}|,|S_{0,0}|,\dots,|S_{0,4}|)$
\href{https://github.com/Shivaramkratos/Ultimate_Independence_ratio_Python_Gurobi_code/blob/Lemma_6.4/W5_K3_171_57_57_57_9_9_8_1.py}{here},
\href{https://github.com/Shivaramkratos/Ultimate_Independence_ratio_Python_Gurobi_code/blob/Lemma_6.4/W5_K3_171_57_57_57_9_9_8_2.py}{here},
\href{https://github.com/Shivaramkratos/Ultimate_Independence_ratio_Python_Gurobi_code/blob/Lemma_6.4/W5_K3_171_57_57_57_9_9_8_3.py}{here},
and \href{https://github.com/Shivaramkratos/Ultimate_Independence_ratio_Python_Gurobi_code/blob/Lemma_6.4/W5_K3_171_57_57_57_9_9_8_4.py}{here}, respectively. 
This proves $(ii)$.

Now, let $\mathbf{s} = (58, 58, 54)$. First, let $(|S_{0, *}|, |S_{1, *}|, |S_{2, *}|) = (9,9,11)$. Then, without loss of generality, we can assume that \[(|S_{0,*}|,|S_{0,0}|,\dots,|S_{0,4}|) = (9,11, 9, 11, 9, 9) \text{ and }(|S_{1,*}|,|S_{1,0}|,\dots,|S_{1,4}|) = (9,9, 11, 9,11, 9).\]
The infeasibility of this case is verified \href{https://github.com/Shivaramkratos/Ultimate_Independence_ratio_Python_Gurobi_code/blob/Lemma_6.4/W5_K3_170_58_58_54_9_9_11.py}{here}.

Second, let $(|S_{0, *}|, |S_{1, *}|, |S_{2, *}|) = (9,8,10)$.  Then, we just assume that \[(|S_{1,*}|,|S_{1,0}|,\dots,|S_{1,4}|) = (8,11, 9, 10, 10, 10).\] The infeasibility of this case is verified \href{https://github.com/Shivaramkratos/Ultimate_Independence_ratio_Python_Gurobi_code/blob/Lemma_6.4/W5_K3_170_58_58_54_9_8_10.py}{here}.

Thirdly, let $(|S_{0, *}|, |S_{1, *}|, |S_{2, *}|) = (9,9,9)$. without loss of generality, we can assume that \[(|S_{0,*}|,|S_{0,0}|,\dots,|S_{0,4}|) = (9,11, 9, 11, 9, 9) \text{ and }(|S_{1,*}|,|S_{1,0}|,\dots,|S_{1,4}|) = (9,9, 11, 9,11, 9).\]
The infeasibility of this case is verified \href{https://github.com/Shivaramkratos/Ultimate_Independence_ratio_Python_Gurobi_code/blob/Lemma_6.4/W5_K3_170_58_58_54_9_9_9.py}{here}. This concludes the proof of $(iii)$.
\end{proof}

\begin{lemma}\label{lemma:large_sets_third_power_triangle}
Let $S$ be an independent set in $W_{5}^{\Box 3}\Box K_3$.
If $|S| = 170$, then 
\[\mathbf{s} \in \{(58,58,54), (57,57,56)\}.\]
\end{lemma}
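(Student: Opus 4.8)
The plan is to bound $|S|$ from above by decomposing along the $K_3$ factor and using the known independence numbers from Lemma~\ref{lemma:5_wheel_small_indendence_no}, then to eliminate all profiles $\mathbf{s}$ except the two claimed. Writing $S_0, S_1, S_2$ for the three slices of $S$ across the vertices of $K_3$ (relabelled so that $|S_0|\ge|S_1|\ge|S_2|$), each $S_i$ is an independent set in a copy of $W_5^{\Box 3}$, so $|S_i|\le \alpha(W_5^{\Box 3}) = 58$. Since $|S| = |S_0|+|S_1|+|S_2| = 170$, the top slice satisfies $|S_0|\ge \lceil 170/3\rceil = 57$, forcing $|S_0|\in\{57,58\}$. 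The strategy is to split into these two cases and in each case pin down the admissible pairs $(|S_1|,|S_2|)$.

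First I would handle $|S_0| = 58$. Then $|S_1|+|S_2| = 112$ with $58\ge|S_1|\ge|S_2|$, so $|S_1|\ge 54$, giving the candidate profiles $(58,58,54)$, $(58,57,55)$, and $(58,56,56)$. The last two are exactly the profiles ruled out by Lemma~\ref{lemma:thirdpower_wit5h_K3_infeasible_sets}$(i)$, but that lemma's infeasibility is only established for specific center-configurations $(|S_{0,*}|,|S_{0,0}|,\dots,|S_{0,4}|)$. So the key step here is to invoke Lemma~\ref{lemma:third_power_center_independence}$(v)$: since $|S_0| = 58$, the slice $S_0$ (being a size-$58$ independent set in $W_5^{\Box 3}$) has center profile $\mathbf{i}$ equal, up to symmetry, to one of the two vectors $(9,11,9,11,9,9)$ or $(8,11,9,10,10,10)$. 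These are precisely the two sub-cases covered by the computational verifications cited in Lemma~\ref{lemma:thirdpower_wit5h_K3_infeasible_sets}$(i)$, so both $(58,57,55)$ and $(58,56,56)$ are eliminated, leaving only $(58,58,54)$ in this branch.

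Next I would treat $|S_0| = 57$. Then $|S_1|+|S_2| = 113$ with $57\ge|S_1|\ge|S_2|$, so $|S_1| = 57$ or (if $|S_1| = 56$) we need $|S_2| = 57 > |S_1|$, a contradiction; hence $|S_1| = 57$ and $|S_2| = 56$, giving the profile $(57,57,56)$. The only other candidate would be $(57,57,57)$ summing to $171\neq 170$, which does not arise here (it is handled separately in Lemma~\ref{lemma:thirdpower_wit5h_K3_infeasible_sets}$(ii)$ for the size-$171$ question). Thus the $|S_0|=57$ branch yields exactly $(57,57,56)$. Combining the two branches gives $\mathbf{s}\in\{(58,58,54),(57,57,56)\}$ as claimed.

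The main obstacle, and the only genuinely nontrivial step, is the elimination of $(58,57,55)$ and $(58,56,56)$ in the $|S_0|=58$ branch: the arithmetic alone does not rule these out, so one must reduce to a finite check by using Lemma~\ref{lemma:third_power_center_independence}$(v)$ to fix the center profile of the size-$58$ slice to one of two canonical vectors, and then appeal to the computer-assisted infeasibility results of Lemma~\ref{lemma:thirdpower_wit5h_K3_infeasible_sets}$(i)$. Care is needed to confirm that the ``without loss of generality'' symmetry reduction on $S_0$'s center profile is compatible with the relabelling that puts $|S_0|\ge|S_1|\ge|S_2|$, so that the two cited sub-cases genuinely exhaust the possibilities.
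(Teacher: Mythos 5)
Your proposal is correct and takes essentially the same approach as the paper: bound each slice by $\alpha(W_5^{\Box 3})=58$ (Lemma~\ref{lemma:5_wheel_small_indendence_no}), enumerate the candidate profiles $(58,58,54)$, $(58,57,55)$, $(58,56,56)$, $(57,57,56)$ by case analysis, and eliminate the middle two via Lemma~\ref{lemma:thirdpower_wit5h_K3_infeasible_sets}$(i)$. Your additional invocation of Lemma~\ref{lemma:third_power_center_independence}$(v)$ replicates reasoning that the paper keeps inside the proof of Lemma~\ref{lemma:thirdpower_wit5h_K3_infeasible_sets}$(i)$, whose statement is unconditional, so it is harmless but unnecessary here; note also the minor slip that $|S_1|+|S_2|=112$ with $|S_1|\ge|S_2|$ forces $|S_1|\ge 56$ rather than $54$, though your candidate list is nevertheless the correct one.
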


\begin{proof}
By Lemma~\ref{lemma:5_wheel_small_indendence_no} $(iii)$, for all $i$, $|S_i| \leq 58$.
By our assumption $|S| = |S_0|+|S_1|+|S_2| = 170$.
Thus, it is easy to see, using the branch-and-bound technique used earlier, that 
\[\mathbf{s} \in \{(58, 58, 54), (58, 57, 55), (58, 56, 56), (57, 57, 56)\}.\]
Using Lemma~\ref{lemma:thirdpower_wit5h_K3_infeasible_sets} $(i)$ we see that $\mathbf{s} \notin \{(58, 57, 55), (58, 56, 56)\}$.
\end{proof} 

\begin{figure}[!h]
\scalebox{0.9875}{
\centering
\begin{subfigure}{0.3\textwidth}
    \includegraphics[scale = 0.33]{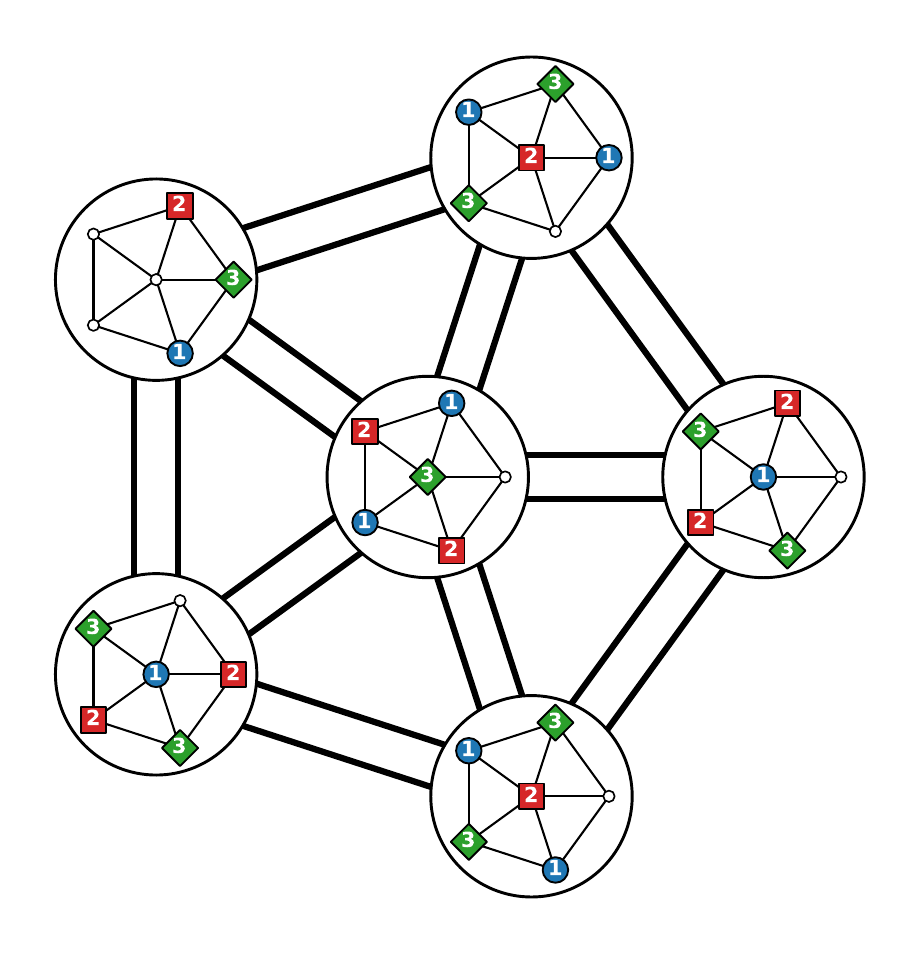}
    \caption{$I_*$.}
    \label{fig:star}
\end{subfigure}
\hfill
\begin{subfigure}{0.3\textwidth}
    \includegraphics[scale = 0.33]{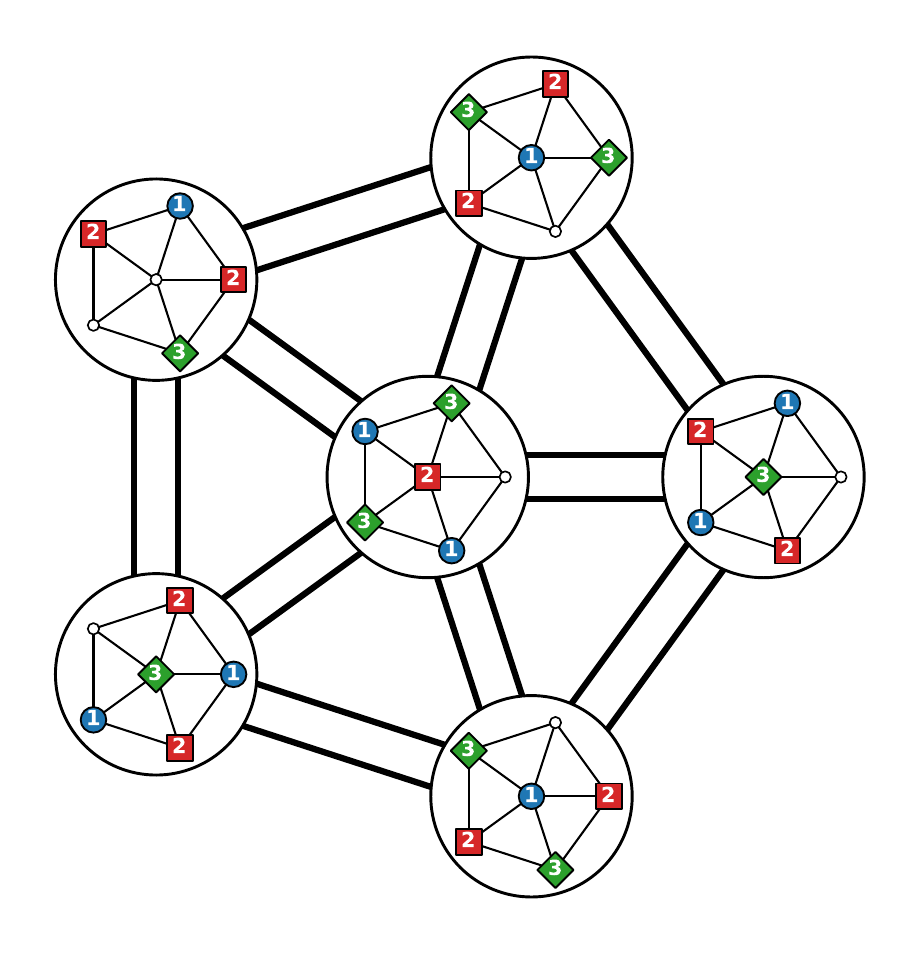}
    \caption{$I_0$.}
    \label{fig:1}
\end{subfigure}
\hfill
\begin{subfigure}{0.3\textwidth}
    \includegraphics[scale = 0.33]{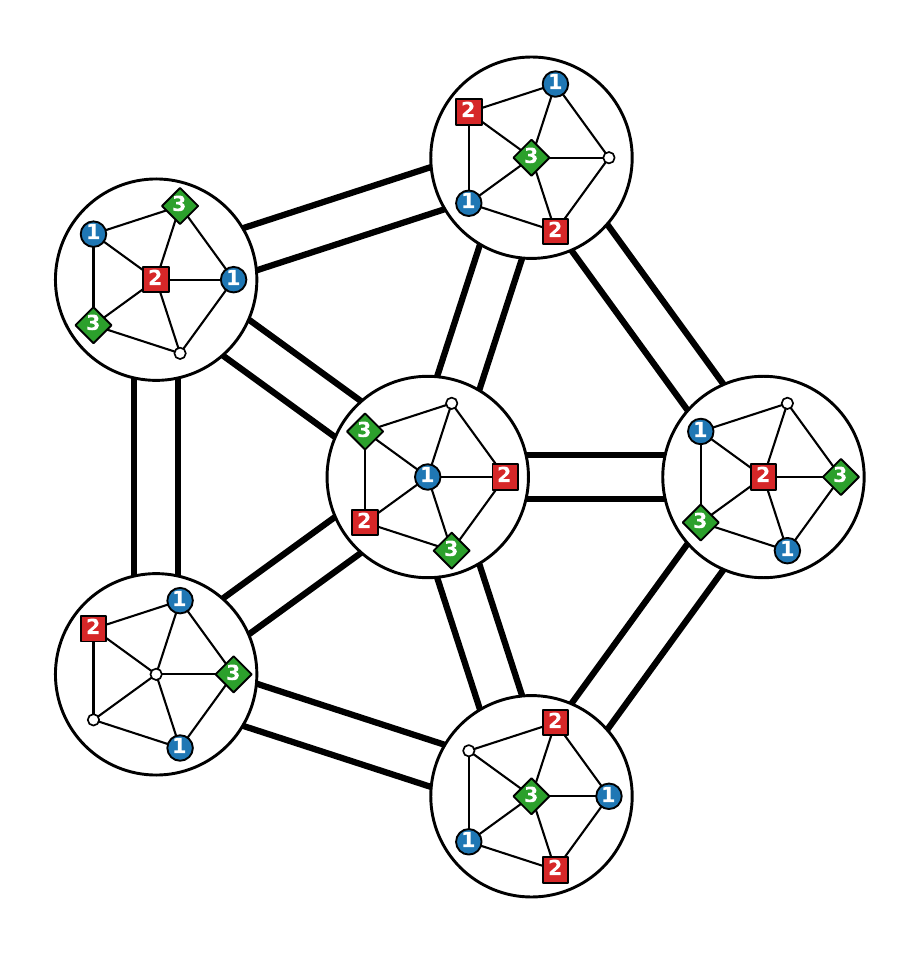}
    \caption{$I_1$.}
    \label{fig:2}
\end{subfigure}
}
\scalebox{0.9875}{
\hfill
\begin{subfigure}{0.3\textwidth}
    \includegraphics[scale = 0.33]{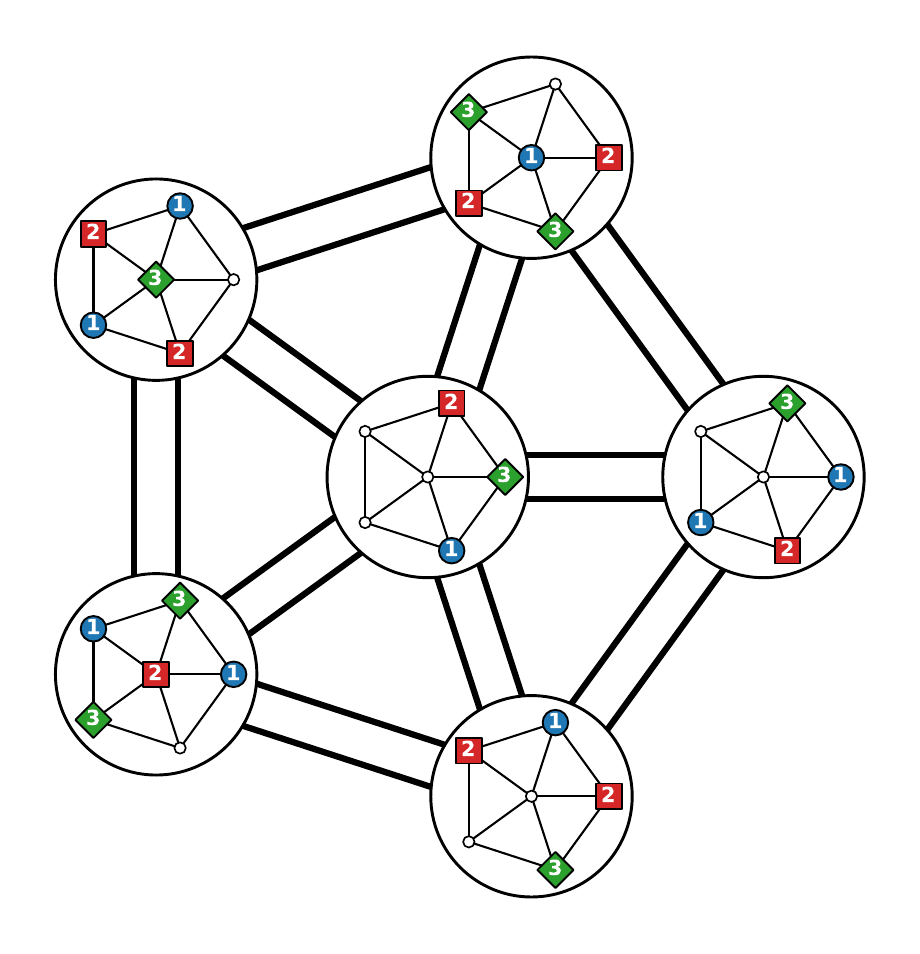}
    \caption{$I_2$.}
    \label{fig:3}
\end{subfigure}
\hfill
\begin{subfigure}{0.3\textwidth}
    \includegraphics[scale = 0.33]{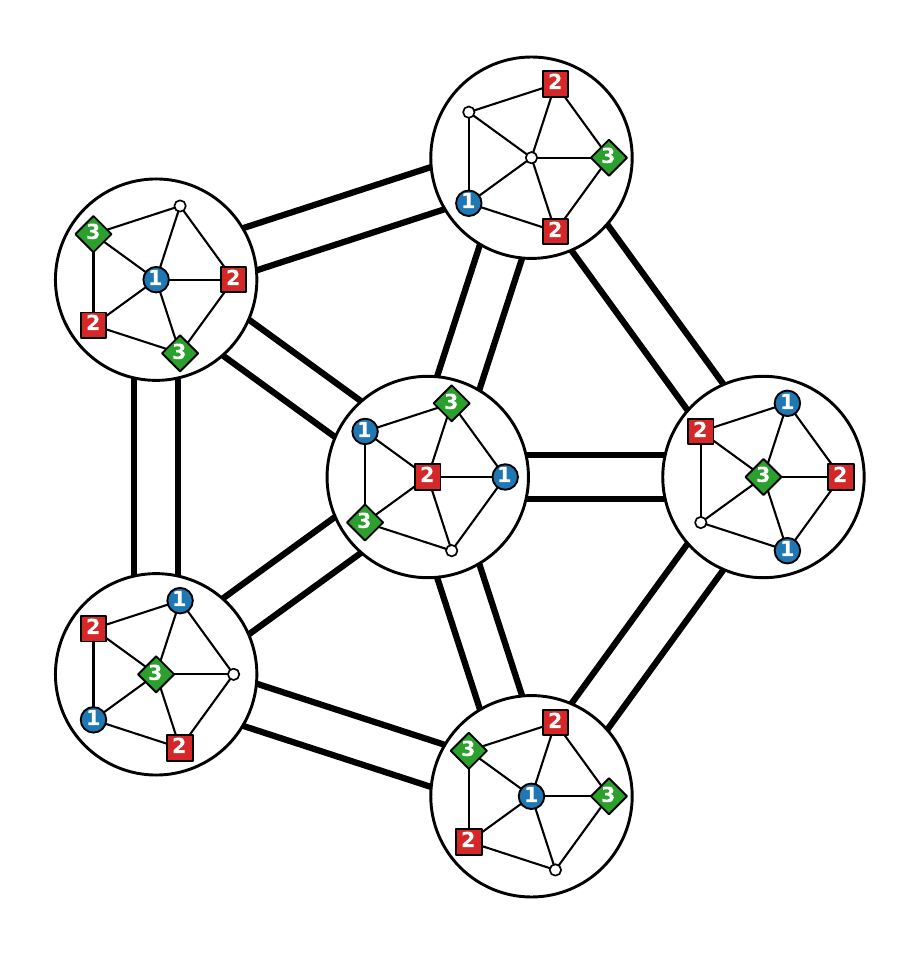}
    \caption{$I_3$.}
    \label{fig:4}
\end{subfigure}
\hfill
\begin{subfigure}{0.3\textwidth}
    \includegraphics[scale = 0.33]{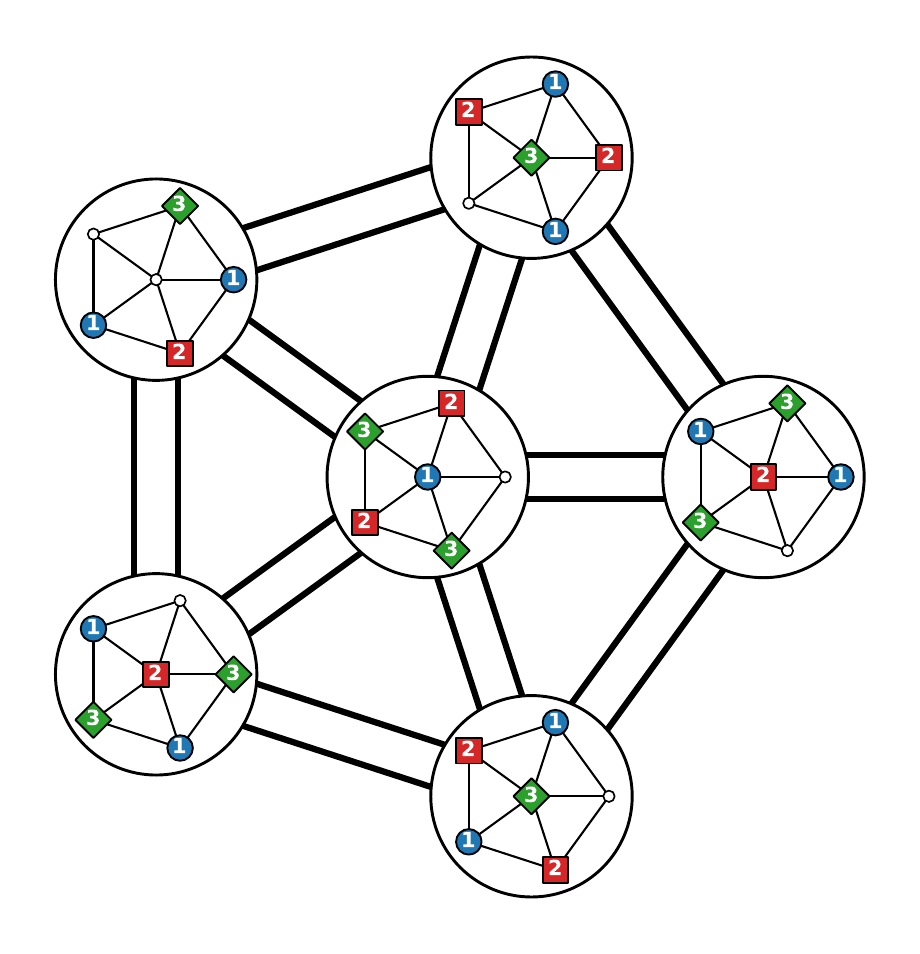}
    \caption{$I_4$.}
    \label{fig:5}
\end{subfigure}
}
\caption{An independent $S$ set of size $170$ in $W^{\Box 3}_5 \Box K_3$.
Each set $S_i$ is represented by a colour in $W^{\Box 3}_5$, while $W^{\Box 3}_5$ is displayed using $6$ copies $W^{\Box 2}_5$
via the partition $I_{*},I_0,\dots, I_4$.
}
\label{fig:independent_set_170}
\end{figure}

We are now prepared to finish the proof of Theorem \ref{thm:5_wheel_thirdpower_triangle}. 

\begin{proof}[Proof of Theorem \ref{thm:5_wheel_thirdpower_triangle}]
We begin by noting that Figure~\ref{fig:independent_set_170} displays an independent set of size $170$ in $W^{\Box 3}_5 \Box K_3$.
Hence, $\alpha(W^{\Box 3}_5 \Box K_3) \geq 170$ as desired.
Now, suppose to the contrary that $|S|\ge 171$. We can assume that $|S|=171$, by deleting some vertices from $S$ if needed. Since $|S_i|\le 58$ by Lemma \ref{lemma:5_wheel_small_indendence_no}, and $|S_0|+|S_1|+|S_2| = 171$ by assumption, it is easy to see that
\[ \mathbf{x} \in \{(58, 58, 55), (58, 57, 56), (57, 57, 57)\}.\]
By Lemma \ref{lemma:thirdpower_wit5h_K3_infeasible_sets}, we see that such an $S$ does not exist. This completes the proof.
\end{proof}

\subsection{On $\alpha(W_5^{\Box 4})$}

Using Theorem \ref{thm:5_wheel_thirdpower_triangle} and Lemma \ref{lemma:5_wheel_small_indendence_no}, we immediately see that 
\begin{align*}
    \alpha(W_5^{\Box 4}) &\le \alpha(W_5^{\Box 3}\Box K_3) + \alpha(W_5^{\Box 3}\Box P_3)\\
    & \le 170 + 3\cdot \alpha(W_5^{\Box 3})\\
    & = 170 + 3\cdot 58 = 344.
\end{align*}

We can improve this a bit further. 

\begin{theorem}\label{thm:5_wheel_fourth_power} We have 
\[ \alpha(W^{\Box 4}_5) \le 343.\]
\end{theorem}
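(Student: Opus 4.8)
The plan is to improve the trivial bound $\alpha(W_5^{\Box 4}) \le 344$ by exactly one. The starting point is the decomposition used above: partitioning $W_5^{\Box 4}$ into six layers indexed by the last coordinate $w_* , w_0, \dots, w_4$, where the layer over the central vertex $w_*$ is not adjacent to any other layer, while the five layers over the cycle vertices induce a copy of $W_5^{\Box 3} \Box C_5$. Writing $I$ for a maximum independent set and $I_i = I \cap V(H_i^{(4)})$, the trivial bound comes from bounding the central layer by $\alpha(W_5^{\Box 3}) = 58$ (Lemma~\ref{lemma:5_wheel_small_indendence_no}) and the remaining five layers together by $\alpha(W_5^{\Box 3}\Box C_5) \le (2t+1)\alpha(W_5^{\Box 3}) = 5\cdot 58 = 290$. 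To shave off one, I would argue that these two bounds cannot be attained simultaneously, so the true total is at most $343$.

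First I would suppose for contradiction that $|I| = 344$, forcing $|I_*| = 58$ and each of the five cyclic layers to also have $|I_i| = 58$ (since $\alpha(W_5^{\Box 3}\Box C_5)$ is achieved only when every one of the five copies contributes a full $\alpha(W_5^{\Box 3}) = 58$, using the adjacency structure around the $5$-cycle). The key structural obstruction should come from Theorem~\ref{thm:5_wheel_thirdpower_triangle}: the three vertices $w_*, w_0, w_1$ (say) of $W_5$ contain a triangle $K_3$, so the union $I_* \cup I_0 \cup I_1$ restricted to those three layers is an independent set in $W_5^{\Box 3} \Box K_3$, which has size at most $170$. If all three of $|I_*|, |I_0|, |I_1|$ equalled $58$, their union would have size $174 > 170$, an immediate contradiction. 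Hence not all layers can be full, and in particular $|I_*| = 58$ together with two cyclically consecutive full cycle-layers is impossible. I would then push this to rule out the global maximum: since the five cycle-layers sit on a $C_5$, and $\alpha(C_5) = 2$, at most two of them can be pairwise non-adjacent, but the $K_3$ bound constrains even adjacent triples through the center; carefully combining $\alpha(W_5^{\Box 3}\Box C_5)$ with the requirement that no triangle $\{w_*, w_j, w_{j+1}\}$ carries $3\times 58$ vertices should reduce the attainable total below $344$.

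The cleanest route, which I expect to be the intended one, is to invoke Theorem~\ref{thm:5_wheel_thirdpower_triangle} directly on a different decomposition: choose the triangle $T = \{w_*, w_0, w_1\}$ in the last $W_5$ factor. Then $I_* \cup I_0 \cup I_1$ lives in $W_5^{\Box 3}\Box K_3$ and has size at most $170$, while the remaining layers $I_2, I_3, I_4$ lie over the path $w_2 w_3 w_4$ (a $P_3$ in $W_5$, after removing the triangle vertices), giving $|I_2| + |I_3| + |I_4| \le \alpha(W_5^{\Box 3}\Box P_3) \le 3\cdot 58 = 174$. This yields $|I| \le 170 + 174 = 344$, recovering the trivial bound but now making visible exactly where slack lives. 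To gain the final unit I would show that equality $170 + 174$ forces $|I_2|=|I_3|=|I_4|=58$ simultaneously with a size-$170$ configuration on the triangle, and then derive a contradiction by re-examining a second overlapping triangle, for instance $\{w_*, w_2, w_3\}$, whose three layers would then have to total $58 + 58 + 58 = 174 > 170$, again violating Theorem~\ref{thm:5_wheel_thirdpower_triangle}.

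The main obstacle is the bookkeeping in the overlap argument: the layer $I_*$ over the central vertex is shared by every triangle $\{w_*, w_j, w_{j+1}\}$ in $W_5$, so one must track it consistently across the different triangle decompositions rather than double-counting or losing it. Concretely, the hard part is arranging the case analysis so that whenever the path-layers $I_2,I_3,I_4$ are all full at $58$, some triangle through the center together with two consecutive full cycle-layers exceeds $170$, closing out every way of reaching $344$. I anticipate this final step relies on the fact that among the five cycle-layers around $C_5$ one cannot avoid having a triangle $\{w_*, w_j, w_{j+1}\}$ with all three layers large, and that the precise size-$170$ profiles classified in Lemma~\ref{lemma:large_sets_third_power_triangle} (namely $\mathbf{s} \in \{(58,58,54),(57,57,56)\}$) rule out three simultaneous $58$'s on any triangle; invoking that classification should let me convert the counting into a clean contradiction and conclude $\alpha(W_5^{\Box 4}) \le 343$.
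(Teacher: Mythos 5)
Your first-level counting reproduces the opening of the paper's proof---rotating the bound $|I_*|+|I_i|+|I_{i+1}|\le 170$ from Theorem~\ref{thm:5_wheel_thirdpower_triangle} around the five triangles $\{w_*,w_i,w_{i+1}\}$---but the contradiction you claim at the end does not exist, and this is a genuine gap. Push your own constraints to their conclusion: assuming $|I|=344$, the rotated decompositions force $|I_i|=58$ for every cycle layer and force every triangle sum to equal $170$ exactly, which pins $|I_*|=54$. This configuration survives everything you invoke. No triangle carries three $58$'s, because every triangle contains $I_*$ and $54+58+58=170$; and the profile $(58,58,54)$ is explicitly \emph{feasible} in Lemma~\ref{lemma:large_sets_third_power_triangle}, so the classification you hoped would finish the job cannot eliminate it. Your proposed coup de gr\^{a}ce---that a second triangle $\{w_*,w_2,w_3\}$ ``would have to total $58+58+58$''---presupposes $|I_*|=58$, which is never forced (and is in fact impossible once any two consecutive cycle layers are full). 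Two smaller errors compound this: the hub layer \emph{is} adjacent to all five cycle layers, since $w_*$ dominates $W_5$, so your opening decomposition ($58+290=348$) is not the source of the trivial bound, which is $170+3\cdot 58=344$ via the triangle-plus-path split; and your appeal to $\alpha(C_5)=2$ plays no role in any correct accounting.

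The paper reaches exactly the surviving configuration $|I_*|=54$, $|I_i|=58$ for all $i$ by the same summation, and then does the step your proposal lacks: it descends to the second coordinate. Writing $I_{i,j}=I\cap V(H^{(4)}_{i,j})$, Lemma~\ref{lemma:third_power_center_independence}$(v)$ forces $|I_{i,*}|\le 9$ whenever $|I_i|=58$, and the identity $54=|I_{*,*}|+\sum_{i=0}^{4}|I_{i,*}|$ together with $|I_{*,*}|\le\alpha(W_5^{\Box 2})=11$ forces $|I_{*,*}|\in\{9,10,11\}$ and at least three indices with $|I_{i,*}|=9$, two of them cyclically adjacent. Viewing $I_*\cup I_0\cup I_1$ as an independent set in $W_5^{\Box 3}\Box K_3$ with $\mathbf{s}=(58,58,54)$, the computer-certified infeasibility of the centre profiles $(9,9,9)$ and $(9,9,11)$ in Lemma~\ref{lemma:thirdpower_wit5h_K3_infeasible_sets}$(iii)$ leaves only $|I_{*,*}|=10$; then the counting identity forces four $9$'s and one $8$ among the $|I_{i,*}|$, and the triangle through the index with value $8$ realizes the infeasible profile $(9,8,10)$, the final contradiction. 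The moral is that no argument at the level of the layer sizes $|I_i|$ alone can prove the theorem: all first-level constraints are simultaneously tight and satisfiable, so the proof genuinely requires the second-coordinate slices and the infeasibility certificates of Lemma~\ref{lemma:thirdpower_wit5h_K3_infeasible_sets}, neither of which appears in your plan.
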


\begin{proof}
Suppose, to the contrary, that there exists an independent set $I$ in $W^{\Box 4}$ such that $|I|=344$. For $i, j\in \{*, 0, \ldots, 4\}$, let
\[I_i = I\cap V(H_i^{(4)})\quad \text{ and }\quad I_{i, j} = I\cap V(H_{i,j}^{(4)}).\] Clearly, $I = \cup_{i}I_i$ and  $I_i = \cup_{j} I_{i,j}.$ Using Theorem \ref{thm:5_wheel_thirdpower_triangle}, for any $i\in \mathbb{Z}_5$, we have
\[|I_*| + |I_{i}| + |I_{i+1}| \le 170.\] 
Summing up over $i \in \mathbb{Z}_5$ gives
\[ 5|I_*| + 2 \sum_{i=0}^4 |I_i| \le 850.\]
Thus
\[ 3|I_*| \le 850 - 2|I| = 850 - 688 = 162, \]
implying $|I_*|\le 54$. We also see that 
\[54\ge |I_*|\ge |I| - \sum_{i=0}^4 |I_i| \ge 344 - 5\cdot 58 = 54,\] i.e., here 
all inequalities are equalities. In other words, $|I_*| = 54$ and $|I_i| = 58$ for all $i\in \mathbb{Z}_5$.
Also,
\begin{equation}\label{eq:343_one}
54 = |I_*| = |I_{*, *}| + \sum_{i = 0}^4 |I_{i,*}|.
\end{equation}
By Lemma \ref{lemma:third_power_center_independence} (or its contrapositive), $|I_{i,*}|\le 9$ since $|I_i| = 58$ for $i\in \{0, \ldots, 4\}$. By \eqref{eq:343_one}, $|I_{*,*}|\ge 9$. Let $A = \{i : |I_{i,*}| = 9, 0\le i\le 4\}$. Since $|I_{*,*}|\le 11$, we see that $|A|\ge 3$ by \eqref{eq:343_one}. Without loss of generality, we can assume that $\{0,1\}\subset A$.  

Note that $J = I_* \cup I_0\cup I_1$ is an independent set in $W_5^{\Box 3}\Box K_3$ with $(|I_{*, *}|, |I_{0, *}|, |I_{1, *}|) = (|I_{*,*}|, 9, 9)$. If $|I_{*, *}|\in \{9, 11\}$, then such a $J$ does not exist by Lemma \ref{lemma:thirdpower_wit5h_K3_infeasible_sets} $(iii)$. So the only possibility is $|I_{*, *}| = 10$. By \eqref{eq:343_one}, we see that $|A| = 4$. Without loss of generality, we can assume that $|I_{4, *}| = 8$. Then $J' = I_{*}\cup I_0 \cup I_4$ is an independent set in $W_5^{\Box 3}\Box K_3$ with $(|I_{*, *}|, |I_{0, *}|, |I_{4, *}|) = (10, 9, 8)$. By Lemma \ref{lemma:thirdpower_wit5h_K3_infeasible_sets} $(iii)$, such a set $J'$ does not exist. The proof is complete. 
\end{proof}

\subsection{On $\alpha(W_{5}^{\Box 4}\Box K_3)$}

\begin{theorem}\label{thm:5_wheel_fourthpower_triangle}
    $\alpha(W_5^{\Box 4}\Box K_3) \le 1019$.
\end{theorem}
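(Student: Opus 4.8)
The plan is to argue by contradiction, assuming there is an independent set $S \subseteq W_5^{\Box 4}\Box K_3$ with $|S| = 1020$, and to exploit two different slicings of $W_5^{\Box 4}\Box K_3$ simultaneously. First I would decompose $S$ along the fourth $W_5$-coordinate into six pieces $T_*, T_0, \dots, T_4$, where $T_p$ collects the vertices whose fourth coordinate is $w_p$; each $T_p$ is an independent set in a copy of $W_5^{\Box 3}\Box K_3$, so $|T_p|\le 170$ by Theorem~\ref{thm:5_wheel_thirdpower_triangle}. Since $6\cdot 170 = 1020$, the assumption forces $|T_p| = 170$ for every $p$. In parallel I would decompose $S$ along the $K_3$-coordinate into $S_0, S_1, S_2$, each an independent set in $W_5^{\Box 4}$, so $|S_i|\le 343$ by Theorem~\ref{thm:5_wheel_fourth_power}. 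Writing $s_{i,p} = |S_i \cap T_p|$, each such intersection is an independent set in $W_5^{\Box 3}$, so $s_{i,p}\le 58$ by Lemma~\ref{lemma:5_wheel_small_indendence_no}; both decompositions are then recorded by the single array $(s_{i,p})$.

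The key step is a double count using the triangular faces of $W_5$ in the fourth coordinate. For each $i\in\mathbb{Z}_3$ and each rim edge $w_j w_{j+1}$ of the $5$-cycle, the hub $w_*$ together with $w_j, w_{j+1}$ induces a triangle, so the vertices of $S_i$ whose fourth coordinate lies in $\{w_*, w_j, w_{j+1}\}$ form an independent set in $W_5^{\Box 3}\Box K_3$; Theorem~\ref{thm:5_wheel_thirdpower_triangle} gives $s_{i,*} + s_{i,j} + s_{i,j+1}\le 170$. Summing these five inequalities over $j\in\mathbb{Z}_5$, each rim term $s_{i,m}$ is counted twice and the hub term $s_{i,*}$ five times, yielding $3s_{i,*} + 2|S_i|\le 850$ for each $i$. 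Summing the three resulting inequalities over $i$ and using $\sum_i s_{i,*} = |T_*| = 170$ together with $\sum_i|S_i| = |S| = 1020$ gives exactly $3\cdot 170 + 2\cdot 1020 = 2550 = 3\cdot 850$, so every inequality $3s_{i,*} + 2|S_i|\le 850$ must in fact be an equality.

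From equality I would read off $|S_i| = (850 - 3s_{i,*})/2$. Integrality forces each $s_{i,*}$ to be even, while $|S_i|\le 343$ forces $3s_{i,*}\ge 164$, hence $s_{i,*}\ge 56$. Thus the three slice sizes of $T_*$ along the $K_3$-coordinate, namely $(s_{0,*}, s_{1,*}, s_{2,*})$, are all even and at least $56$. But $T_*$ is an independent set of size $170$ in $W_5^{\Box 3}\Box K_3$, so by Lemma~\ref{lemma:large_sets_third_power_triangle} its profile (up to ordering) must be $(58,58,54)$ or $(57,57,56)$. Neither of these consists of even numbers all at least $56$: the first contains $54<56$, and the second contains the odd value $57$. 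This contradiction shows no independent set of size $1020$ exists, so $\alpha(W_5^{\Box 4}\Box K_3)\le 1019$.

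I expect the only delicate point to be the bookkeeping in the double count, namely correctly identifying the five triangular faces $\{w_*,w_j,w_{j+1}\}$ of $W_5$ and verifying the multiplicities (hub five times, each rim vertex twice), together with ensuring the integrality-plus-$343$ step lands precisely on the parity and size obstruction that \emph{both} admissible profiles of Lemma~\ref{lemma:large_sets_third_power_triangle} fail. Notably this argument requires no new computation: it repackages the three previously established (computer-assisted) facts $\alpha(W_5^{\Box 3}\Box K_3) = 170$, $\alpha(W_5^{\Box 4})\le 343$, and the profile classification, so the main risk is an arithmetic slip rather than a conceptual gap.
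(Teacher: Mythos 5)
Your proof is correct, and it reaches the contradiction by a genuinely different mechanism than the paper, although the two arguments share a skeleton: both assume $|S|=1020$, slice along the fourth $W_5$-coordinate to force all six slices to have size exactly $170$, and both rely on the hub--rim-edge triangle inequality $s_{i,*}+s_{i,j}+s_{i,j+1}\le 170$ (valid since $\{w_*,w_j,w_{j+1}\}$ induces a $K_3$ in $W_5$) together with the profile classification of Lemma~\ref{lemma:large_sets_third_power_triangle}. The paper applies the profile lemma at the hub slice to find two indices $i$ with $|S_{i,*}|\ge 57$, deduces from the triangle inequality that for each such $i$ the set of rim indices $j$ with $|S_{i,j}|\ge 57$ is independent in $C_5$ and hence at least three rim slices satisfy $|S_{i,j}|\le 56$, and then pigeonholes to find a rim slice $j_0$ whose triple has two entries $\le 56$, contradicting the forced profile at $j_0$. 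You instead aggregate the five triangle inequalities into the global bound $3s_{i,*}+2|S_i|\le 850$ for each $i$, note that summing over $i\in\mathbb{Z}_3$ gives exactly $3\cdot 170+2\cdot 1020 = 2550 = 3\cdot 850$ so each bound is tight, and extract from the equalities a parity constraint ($s_{i,*}$ even, since $3s_{i,*}=850-2|S_i|$) plus, via $|S_i|\le 343$ from Theorem~\ref{thm:5_wheel_fourth_power}, the bound $s_{i,*}\ge 56$; both admissible profiles of $T_*$ then fail, $(58,58,54)$ on size and $(57,57,56)$ on parity. I verified the bookkeeping you flagged as delicate: the multiplicities (hub five times, each rim slice twice), the equality-forcing step, and the fact that $s_{i,*}\ge \lceil 164/3\rceil = 55$ combined with evenness gives $56$ are all sound. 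A notable structural difference is that your argument makes essential use of $\alpha(W_5^{\Box 4})\le 343$, which the paper proves but never invokes in its own proof of this theorem; in exchange, you need the profile lemma only at the hub slice and avoid the rim-slice pigeonhole. This creates no circularity, since Theorem~\ref{thm:5_wheel_fourth_power} is established beforehand without reference to the present statement, but it does mean your route is hostage to the sharper fourth-power bound (the weaker bound $\alpha(W_5^{\Box 4})\le 344$ would only give $s_{i,*}\ge 54$, killing the size obstruction against $(58,58,54)$, though the parity obstruction would survive), whereas the paper's pigeonhole argument needs neither.
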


\begin{proof} 
Assume $V(K_3) = \{w_i: i\in \mathbb{Z}_3\}$ and let $S$ be an independent set in $W_5^{\Box 4}\Box K_3$. For $i\in \mathbb{Z}_3$ and $j \in \{*\}\cup \mathbb{Z}_5$, define
\[ S_i = \{(a_1, \ldots, a_4, w_i): a_1,\ldots, a_4\in V(W_5)\}\]
and 
\[S_{i,j} = \{(a_1,a_2, a_3, w_j, w_i): a_1, a_2, a_3 \in V(W_5)\}.\]

Note here that $|S_{i,j}|\le \alpha({W_5^{\Box 3}}) = 58$ using Lemma \ref{lemma:5_wheel_small_indendence_no}, and $\sum_{i=0}^2 
|S_{i,j}|\le \alpha(W_{5}^{\Box 3}\Box K_3) = 170$ by Theorem \ref{thm:5_wheel_thirdpower_triangle}.

Assume to the contrary that $|S| = 1020$. Then, for all $j\in \{*\}\cup \mathbb{Z}_5$, we have
\begin{equation} \label{eq:fourth_power_triangle_1}
 \sum_{i=0}^2 |S_{i,j}|=170.
\end{equation}
Without loss of generality, we can assume that $|S_{0,*}|\ge |S_{1,*}|\ge |S_{2,*}|$. By Lemma \ref{lemma:large_sets_third_power_triangle}, we see that $|S_{2,*}|\in \{54, 56\}$ and $|S_{0,*}|\ge |S_{1,*}|\ge 57$. For $i\in \{0,1\}$, define $A_i = \{0\le j\le 4: |S_{i,j}|\le 56\}$. By Lemma \ref{lemma:large_sets_third_power_triangle}, we have $|A_i|\ge 3$ for $i\in \{0,1\}$. Thus, there exists an index $j_0\in A_0\cap A_1$ by the Pigeonhole Principle. Thus no ordering of elements in $\{|S_{0,j_0}|, |S_{1,j_0}|,|S_{2,j_0}|\}$ lies in $\{(58, 58, 54), (57, 57, 56)\}$ and so 
\[|S_{0,j_0}| + |S_{1,j_0}| + |S_{2,j_0}|\le 169.\]
This contradicts \eqref{eq:fourth_power_triangle_1}.
\end{proof}

Theorem \ref{thm:5_wheel_best_bound} now follows using the Theorems \ref{thm:5_wheel_fourthpower_triangle} and \ref{thm:upper_bound_triangle}.

\section{Future Work}

Of course, the most natural problem for future work remains Conjecture~\ref{Conj: Odd Wheels}.
It seems to us that there remains significant room to theoretically, i.e., without computer assistance,
improve our bound in Theorem~\ref{thm:higher_wheel_best_bound}.
Recalling that Theorem~\ref{thm:higher_wheel_best_bound} is obtained by estimating $\alpha(W_{2t+1}^{\Box 2}\Box K_3)$
for all values of $t$, we believe that determining the true value of $\alpha(W_{2t+1}^{\Box 2}\Box K_3)$ is of interest.
Noting our computational work summarized in Table~\ref{table:smallwheel} and Appendix~A, we strongly believe the following.

\begin{conjecture}\label{Conj: Squares} For all $t\geq 2$,
    \[
    (2t+2)\alpha(W_{2t+1}) - \alpha(W_{2t+1}^{\Box 2}\Box K_3) = t-1.
    \] 
    Hence, the largest $3$-colourable subgraph of $W_{2t+1}^{\Box 2}$
    is order $4t^2+5t+3$.
\end{conjecture}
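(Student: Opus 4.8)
The plan is to prove the two inequalities separately, since the equality asserts $\alpha(W_{2t+1}^{\Box 2}\Box K_3)=4t^2+5t+3$; equivalently (using $\alpha(W_{2t+1}\Box K_3)=2t+1$ from Proposition~\ref{prop:higher_wheels_small_independence}) this value sits exactly $t-1$ below the product bound $|V(W_{2t+1})|\cdot\alpha(W_{2t+1}\Box K_3)=(2t+2)(2t+1)$. The lower bound $\alpha(W_{2t+1}^{\Box 2}\Box K_3)\ge 4t^2+5t+3$ should come from an explicit construction generalising the $2\le t\le 5$ pictures of Appendix~A, while the upper bound $\alpha(W_{2t+1}^{\Box 2}\Box K_3)\le 4t^2+5t+3$ is the difficult half, since Theorem~\ref{thm:higher_wheel_best_bound} only yields $4t^2+6t$ and the gap $t-3$ must be recovered.

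For the construction I would use the equivalent formulation that $\alpha(W_{2t+1}^{\Box 2}\Box K_3)$ is the order of a largest $3$-colourable induced subgraph of $W_{2t+1}^{\Box 2}$, and decompose the latter into the rim–rim torus $C_{2t+1}\Box C_{2t+1}$, the two ``axis'' cycles $\{(w_a,w_*)\}$ and $\{(w_*,w_b)\}$, and the apex $(w_*,w_*)$. Colouring the whole torus ($\chi(C_{2t+1}\Box C_{2t+1})=3$) uses $(2t+1)^2$ vertices, but since each row and column is an odd cycle forced to use all three colours it blocks both axes entirely, leaving only the apex, for a total of $(2t+1)^2+1$. The extra $t+1$ comes from a diagonal \emph{sacrifice}: deleting $t+1$ torus vertices along a diagonal, each shared between one row and one column, makes those rows and columns paths that can be $2$-coloured so as to free a colour, enabling $t+1$ vertices on each axis together with the apex. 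The count is $(2t+1)^2-(t+1)+(t+1)+(t+1)+1=4t^2+5t+3$, and I expect the only real work is checking global consistency of the colours along the diagonal, which Appendix~A already exhibits for small $t$.

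For the upper bound I would decompose an independent set $S$ of $W_{2t+1}^{\Box 2}\Box K_3$ by the second wheel-coordinate, writing it as independent sets $U_*,U_0,\dots,U_{2t}$ in $M:=W_{2t+1}\Box K_3$ (each $H_p^{(2)}\Box K_3\cong M$), where $U_*\cap U_p=\emptyset$ for all $p$ and $U_p\cap U_{p+1}=\emptyset$ cyclically. Two partial bounds are available: the \emph{bipartite/pair bound} $|U_p|+|U_{p+1}|\le\alpha(M\Box K_2)\le 2\alpha(M)=4t+2$, which summed around $C_{2t+1}$ gives $\sum_p|U_p|\le(2t+1)^2$; and the \emph{line-blocking bound}, namely that each $(w_a,w_i)\in U_*$ forbids the whole line $\{(w_a,w_p,w_i):p\in\mathbb{Z}_{2t+1}\}$ from $S$, because the host graph is $W_{2t+1}=K_1\vee C_{2t+1}$. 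The heart of the proof must be an inequality using \emph{both} at once: I would show that when $\sum_p|U_p|$ is near its maximum $(2t+1)^2$, the requirement that each $U_p$ be a maximum independent set of $M$ with consecutive ones disjoint forces the $U_p$ to spread over at least $5t+4$ distinct lines, leaving at most $t+2$ lines free for $U_*$, whence $|S|=|U_*|+\sum_p|U_p|\le 4t^2+5t+3$. Making this precise requires classifying the maximum independent sets of $M$ (one wheel-apex vertex $(w_*,w_i)$ together with two vertex-disjoint size-$t$ independent sets of $C_{2t+1}$ in the other two $K_3$-classes, by the proof of Proposition~\ref{prop:higher_wheels_small_independence}) and tracking how forced disjointness propagates around the \emph{odd} cycle.

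The main obstacle is exactly this joint spreading step, uniformly in $t$. Pure counting fails: each partial bound above, combined with $|U_*|\le 2t+1$, still only reproduces the product bound $(2t+2)(2t+1)$, since about $(2t+1)^2/t$ lines already suffice to realise a maximal rim while leaving room for a full maximum $U_*$; the loss of $t-1$ is a genuine parity effect of the odd cycle interacting with the apex, not a consequence of either bound alone. I expect that making the spreading estimate rigorous for all $t$ will be comparable in difficulty to proving the conjectured maximal-profile pattern behind $\chi_f(W_{2t+1}^{\Box 2})$ in Section~5, which this paper also verifies only computationally for small $t$. An alternative, and perhaps more tractable, route would be to push the automorphism-reduced linear program of Theorem~\ref{thm:X_f_automorphism_orbits} to $W_{2t+1}^{\Box 2}\Box K_3$ and prove that its extremal profile is precisely the diagonal configuration produced by the construction above.
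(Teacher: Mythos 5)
There is nothing in the paper to compare your attempt against: the statement you were given is Conjecture~\ref{Conj: Squares}, an \emph{open problem} posed in the Future Work section. The paper supplies no proof, only computational verification of $\alpha(W_{2t+1}^{\Box 2}\Box K_3)=4t^2+5t+3$ for $2\le t\le 7$ (Table~\ref{table:smallwheel}) and explicit extremal independent sets for $2\le t\le 5$ (Appendix~A); the paper's strongest general upper bound, proved via Lemma~\ref{lemma:ind_set_higher_power_large_center} in the course of Theorem~\ref{thm:higher_wheel_best_bound}, is only $4t^2+6t$. Your proposal is, to its credit, honest about being a programme rather than a proof, and it correctly identifies why the known techniques fall short. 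One genuinely useful observation you make in passing: the conjecture's displayed identity is internally inconsistent as printed, since $\alpha(W_{2t+1})=t$ gives $(2t+2)\alpha(W_{2t+1})=2t^2+2t$, which does not differ from $4t^2+5t+3$ by $t-1$; your reading via $(2t+2)\,\alpha(W_{2t+1}\Box K_3)=(2t+2)(2t+1)$ is the one consistent with the ``Hence'' clause and with Table~\ref{table:smallwheel}, so the paper's left-hand side presumably contains a typo.

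That said, both halves of your plan have real gaps, which you yourself flag. On the lower bound, the diagonal-sacrifice arithmetic $(2t+1)^2-(t+1)+2(t+1)+1=4t^2+5t+3$ is fine, but the deferred ``consistency check'' is the entire difficulty: after deleting $t+1$ diagonal vertices you must $2$-colour the broken rows and columns so that each frees the \emph{same} missing colour needed at its axis vertex, while every unbroken row and column remains a properly $3$-coloured odd cycle, and the two axes (each an odd cycle dominated by the apex) plus the apex must be completable on top of that; no uniform-in-$t$ construction is given here or in the paper. On the upper bound, the ``joint spreading'' lemma --- that a rim sum near $(2t+1)^2$ forces at least $5t+4$ blocked lines --- is not even formulated precisely, and as stated it only addresses the extremal regime: a complete argument needs a trade-off inequality of the form $|U_*|\le t+2+c\bigl((2t+1)^2-\sum_p|U_p|\bigr)$ with $c\le 1$, since a deficient rim can buy free lines for $U_*$. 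Your proposed classification route is also harder than you suggest: the conjectured extremal sets in Appendix~A have layers $U_p$ of \emph{varying} sizes, so classifying only the maximum independent sets of $W_{2t+1}\Box K_3$ (size $2t+1$) does not suffice; near-maximum layers of sizes $2t$ and $2t-1$ must be handled too, and the forced-disjointness propagation around the odd cycle must be tracked through all of them. In short, the proposal is a reasonable research plan consistent with the paper's evidence, but it proves neither inequality, and the missing steps are exactly where the conjecture's content lies.
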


It is also interesting to ask if we can determine the true value of $\alpha(W^{\Box 4}_5)$.
Some heuristic computer searches suggest that $\alpha(W^{\Box 4}_5) \geq 338$.
We conjecture that this is the true value of $\alpha(W^{\Box 4}_5)$.
If correct, this would yield an improved bound $\mathscr{I}(W_5)$ compared to Theorem~\ref{thm:5_wheel_best_bound}.

\begin{conjecture}
    $\alpha(W^{\Box 4}_5)= 338$
\end{conjecture}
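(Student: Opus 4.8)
The plan is to establish matching lower and upper bounds of $338$. For the lower bound I would exhibit an explicit independent set of size $338$ in $W_5^{\Box 4}$, found by heuristic search and displayed as a figure in the layer format of Figure~\ref{fig:independent_set_170}; verifying independence of a fixed set is immediate. All of the difficulty is in the upper bound, where it suffices to show that no independent set of size $339$ exists. Indeed, deleting vertices from any larger independent set yields one of size exactly $339$, so ruling out $339$ rules out everything above it, and together with the bound $\alpha(W_5^{\Box 4})\le 343$ from Theorem~\ref{thm:5_wheel_fourth_power} this pins the value at $338$.

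For the upper bound I would decompose $I$ along the last coordinate exactly as in the proof of Theorem~\ref{thm:5_wheel_fourth_power}, setting $n_* = |I_*|$ and $n_i = |I_i|$ for $i \in \mathbb{Z}_5$. Since $\{w_*, w_i, w_{i+1}\}$ induces a $K_3$ in $W_5$, Theorem~\ref{thm:5_wheel_thirdpower_triangle} gives the five triangle inequalities $n_* + n_i + n_{i+1} \le 170$. Writing $\sigma_i = 170 - (n_* + n_i + n_{i+1}) \ge 0$ for the slacks and using $\sum_i n_i = 339 - n_*$, summation over $i$ yields
\[
\sum_{i \in \mathbb{Z}_5} \sigma_i = 172 - 3n_*,
\]
so that $49 \le n_* \le 57$, the lower bound coming from $n_i \le 58$. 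The crucial structural lever is Lemma~\ref{lemma:large_sets_third_power_triangle}: whenever a triangle is \emph{tight} (that is, $\sigma_i = 0$), the multiset $\{n_*, n_i, n_{i+1}\}$ must equal $\{58,58,54\}$ or $\{57,57,56\}$.

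I would then split on $n_*$. When $n_*$ is large ($n_* = 57$ forces total slack $1$, and $n_* = 56$ forces slack $4$), almost every triangle is tight, and the tightness dichotomy propagates a rigid near-alternating pattern around the $5$-cycle of outer layers, leaving only a handful of candidate profiles $(n_*, n_0, \dots, n_4)$; for instance $n_* = 57$ admits essentially only $(56,57,56,57,56)$ up to rotation. When $n_*$ is small, $\sum_i n_i = 339 - n_*$ forces several outer layers up to $57$ or $58$, again creating tight $\{58,58,54\}$- or $\{57,57,56\}$-triangles, at which point the center sub-profile data of Lemmas~\ref{lemma:third_power_center_independence} and \ref{lemma:57 center 9}, together with the infeasible configurations of Lemma~\ref{lemma:thirdpower_wit5h_K3_infeasible_sets}$(iii)$, apply to the adjacent triples $I_* \cup I_i \cup I_{i+1}$ exactly as in the proof of Theorem~\ref{thm:5_wheel_fourth_power}. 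Each surviving profile would then be eliminated by a dedicated infeasibility certificate in the per-layer formulation of Program~\ref{prog:alpha_QCIP}, with the layer sizes (and, where available, center sub-profiles) fixed as added constraints.

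The main obstacle is the intermediate range $n_* \in \{54,55,56\}$, where the slack budget $172 - 3n_*$ is large enough to permit many distinct profiles yet the layer totals remain too large for any single counting inequality to close the gap. Handling this range is likely to require both a careful exhaustive enumeration of profiles (to be certain none is overlooked) and a genuinely new structural lemma — an analog of Lemma~\ref{lemma:thirdpower_wit5h_K3_infeasible_sets} governing how two \emph{overlapping} near-maximum triangles sharing the center layer $I_*$ can coexist — since the isolated triangle bound of Theorem~\ref{thm:5_wheel_thirdpower_triangle} alone only yields the weaker value $343$. Because a direct computation of $\alpha(W_5^{\Box 4})$ on the full $1296$-vertex graph is intractable, the feasibility of the entire argument rests on keeping each per-profile integer program small by fixing enough layer and sub-layer sizes; guaranteeing tractability while losing no case is the delicate point.
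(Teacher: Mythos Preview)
The statement you are attempting to prove is a \emph{conjecture} in the paper, not a theorem: it appears in the Future Work section, where the authors write that heuristic computer searches suggest $\alpha(W_5^{\Box 4})\ge 338$ and that they conjecture this to be the true value. The paper provides no proof whatsoever of this statement, so there is no ``paper's own proof'' to compare your proposal against.

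Your proposal is therefore not a proof but an outline of a possible attack, and you are candid about this (``the main obstacle is the intermediate range\dots'', ``is likely to require\dots a genuinely new structural lemma''). As an attack plan it is sensible and consistent with the paper's toolkit: decomposing along the last coordinate, using the triangle bound $n_*+n_i+n_{i+1}\le 170$ from Theorem~\ref{thm:5_wheel_thirdpower_triangle}, and invoking the profile rigidity of Lemma~\ref{lemma:large_sets_third_power_triangle} on tight triangles is exactly how the paper obtains the weaker bound $343$ in Theorem~\ref{thm:5_wheel_fourth_power}. Your slack computation $\sum_i\sigma_i=172-3n_*$ and the resulting range $49\le n_*\le 57$ are correct. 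However, you correctly identify the real gap yourself: closing from $343$ down to $338$ would require ruling out a substantial number of profiles, and the existing lemmas (\ref{lemma:third_power_center_independence}, \ref{lemma:57 center 9}, \ref{lemma:thirdpower_wit5h_K3_infeasible_sets}) are calibrated to size-$170$ triangles, not to the looser configurations that arise when several units of slack are distributed among the five triangles. Nothing in the paper supplies the ``new structural lemma'' you say you would need, nor do the authors claim the per-profile integer programs are tractable; indeed, they state that computing $\alpha(W_5^{\Box 4})$ directly was beyond their resources. So while your plan points in a reasonable direction, it remains a research program rather than a proof, which is precisely why the paper records the claim as a conjecture.
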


Our work computing $\chi_f(W_{2t+1}^{\Box 2})$ for small $t$ leads us to conjecture the following.
Notice that it is sufficient to prove our observations about the extremal points 
of the reduced LP for small $t$ generalize to all $t$. 

\begin{conjecture}\label{conj:chi_fSquare}
    $\chi_f(W_{2t+1}^{\Box 2}) = \frac{6t^2 + 7t + 3}{2t^2 + t +1}$
\end{conjecture}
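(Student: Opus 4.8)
The plan is to prove the conjecture by exactly solving, for every $t\ge 2$, the orbit-reduced version of Program~\ref{prog:1/chi_f}. By Theorem~\ref{thm:X_f_automorphism_orbits}, $1/\chi_f(W_{2t+1}^{\Box 2})$ equals the minimum of $z$ subject to $a+(4t+2)b+(2t+1)^2c=1$, nonnegativity of $a,b,c$, and one constraint $p_1a+p_2b+p_3c\le z$ for each \emph{achievable} profile $(p_1,p_2,p_3)$. So it suffices to prove two inequalities: that a specific candidate point is feasible, and that a nonnegative combination of three constraints forces $z$ to be at least the target value for every feasible point. The pleasant feature of this route is that the feasibility direction uses a single \emph{uniform} upper bound valid for all profiles at once, so I never have to enumerate the maximal profiles for general $t$ — the very difficulty that led the authors to leave this open.

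For feasibility I would test the point $a^\ast=c^\ast=\tfrac{1}{6t^2+7t+3}$, $b^\ast=\tfrac{t+1}{2}c^\ast$, and $z^\ast=(2t^2+t+1)c^\ast=\tfrac{2t^2+t+1}{6t^2+7t+3}$, which satisfies the normalization. Every profile has $p_1\in\{0,1\}$. If $p_1=1$, the central vertex excludes all of $T_2$ and is nonadjacent to $T_3$, so $p_2=0$ and $p_3\le\alpha(C_{2t+1}\Box C_{2t+1})=t(2t+1)$ (immediate from Proposition~\ref{prop:higher_wheel_second_power}), giving $a^\ast+p_3c^\ast\le(2t^2+t+1)c^\ast=z^\ast$. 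The crux is the case $p_1=0$. Here I would apply Lemma~\ref{lemma:ind_set_higher_power_large_center} twice: once as stated, and once to the image of $I$ under the coordinate-swap automorphism of $W_{2t+1}^{\Box 2}$. Writing $r,c$ for the numbers of $T_2$-vertices of the two types, so $r+c=p_2$, these give $p_2+p_3\le t(2t+1)+1+(1-t)r$ and $p_2+p_3\le t(2t+1)+1+(1-t)c$. Averaging collapses the dependence on $r,c$ and yields exactly
\[ p_3\le 2t^2+t+1-\tfrac{t+1}{2}\,p_2, \]
whereupon $p_2b^\ast+p_3c^\ast\le(2t^2+t+1)c^\ast=z^\ast$. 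Thus all constraints hold and $\chi_f\ge\tfrac{6t^2+7t+3}{2t^2+t+1}$.

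For the reverse inequality I would first confirm the three profiles $(1,0,t(2t+1))$, $(0,2,2t^2)$ and $(0,2t,t^2+1)$ are genuinely achievable, so that they appear as constraints. Each reduces to an elementary independence-number computation on a small product graph: deleting one row and one column leaves a grid $P_{2t}\Box P_{2t}$ with $\alpha=2t^2$, and deleting a maximum independent set of $t$ rows and $t$ columns leaves the product $(K_2\cup(t-1)K_1)^{\Box 2}$ with $\alpha=t^2+1$. The averaged bound shows these profiles are in fact extremal. I would then exhibit the dual certificate: the multipliers $\mu_1=\tfrac{2t^2+t+1}{6t^2+7t+3}$, $\mu_2=\tfrac{2t+1}{6t^2+7t+3}$, $\mu_3=\tfrac{(2t+1)^2}{6t^2+7t+3}$ are positive, sum to $1$, and satisfy $\mu_1(1,0,t(2t+1))+\mu_2(0,2,2t^2)+\mu_3(0,2t,t^2+1)=z^\ast(1,4t+2,(2t+1)^2)$ coordinatewise. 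For any feasible $(a,b,c,z)$ this forces $z\ge z^\ast(a+(4t+2)b+(2t+1)^2c)=z^\ast$, hence $\chi_f\le\tfrac{6t^2+7t+3}{2t^2+t+1}$, and combining the two bounds proves the conjecture.

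The genuine content, and the step I would be most careful with, is the averaging of Lemma~\ref{lemma:ind_set_higher_power_large_center} against its transpose: it is exactly this that replaces the per-$t$ profile enumeration by one clean inequality, and it is a small miracle that the averaged bound is tight at both $p_2=2$ and $p_2=2t$, which is what makes the three-constraint dual optimal. Everything else — the normalization algebra, the two small product-graph independence numbers, and the verification that the $\mu_i$ are nonnegative and satisfy the $c$-coordinate equation — is routine. The one point needing explicit justification is that the coordinate swap really is an automorphism acting on profiles as claimed, which is immediate from commutativity of the Cartesian product.
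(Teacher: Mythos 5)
Your proposal is sound, and the honest comparison here is that the paper has \emph{no} proof of this statement: Conjecture~\ref{conj:chi_fSquare} is supported there only by solving the reduced LP for $2\le t\le 7$ and by the remark that it would suffice to show the observed extremal profiles persist for all $t$. Your argument supplies exactly the missing content. I checked both halves. On the dual side, the three profiles $(1,0,t(2t+1))$, $(0,2,2t^2)$, $(0,2t,t^2+1)$ are achievable for every $t$ as you describe ($\alpha(P_{2t}\Box P_{2t})=2t^2$, and $\alpha\big((K_2\cup(t-1)K_1)^{\Box 2}\big)=(t-1)^2+2(t-1)+2=t^2+1$); your multipliers are positive, sum to $1$, and the coordinatewise identity holds --- in the only nontrivial coordinate it reduces to $(2t+1)(4t^3+4t^2+3t+1)=(2t^2+t+1)(2t+1)^2$, which is an algebraic identity --- so every orbit-constant feasible point has $z\ge z^\ast$. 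On the primal side, the symmetrization of Lemma~\ref{lemma:ind_set_higher_power_large_center} is the genuinely new idea and it is valid: the coordinate swap is an automorphism, the lemma applied to $I$ and to its swap gives $p_2+p_3\le t(2t+1)+1+(1-t)\tfrac{r+s}{2}$, hence $p_3\le 2t^2+t+1-\tfrac{t+1}{2}p_2$ whenever $p_1=0$, and together with the easy $p_1=1$ case this dominates \emph{every} profile constraint at your candidate point. As a sanity check, this averaged bound is satisfied by, and tight exactly where you claim against, all maximal profiles the paper computed (e.g.\ $(0,2,32)$ and $(0,8,17)$ for $t=4$), and $z^\ast$ reproduces every entry $\tfrac{41}{11},\tfrac{39}{11},\tfrac{127}{37},\tfrac{47}{14},\tfrac{261}{79},\tfrac{173}{53}$ of Table~\ref{table:smallwheel}.

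Two cautions for the write-up. First, your whole feasibility direction rests on Lemma~\ref{lemma:ind_set_higher_power_large_center} holding for every $k$, and you should be aware that the paper's proof of that lemma is loose at the step $|I|\le |I_*|+\alpha\big(\cup_j P_{s_j}\Box C_{2t+1}\big)$: when $(w_*,w_*)\notin I_*$ the hub column of $H_2\Box C_{2t+1}$ can contribute one vertex beyond $\alpha(H_1\Box C_{2t+1})$, so that intermediate inequality can fail by one; it is rescued because $k\ge 1$ forces $(w_*,w_*)\notin I$ and hence $|I_*|=k$, not $k+1$. The stated bound is nevertheless correct: with $m\ge 1$ hub vertices, the path part lies in $\bigcup_{j,i}P_{s_j}\Box Q_i$ where $C_{2t+1}$ minus the $m$ hub layers splits into paths $Q_i$, and since the numbers of odd parts satisfy $o_P\le k-1$ and $o_Q\le m-1$ by parity, one gets $|S|\le m+\tfrac{(2t+1-k)(2t+1-m)+o_Po_Q}{2}\le t(2t+1-k)+1$ for all $k\le t$. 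So your black-box use is safe, but a self-contained proof of the conjecture should include this repair. Second, a small technical point: in the reduced LP the constraints should be taken over all \emph{achievable} profiles (maximality is only a convenience for pruning); your feasibility argument already covers all of them, so nothing breaks, but the statement of the reduction should say so.
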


By bootstrapping our methods in Section~5.2 for computing maximal profiles, we were also able to compute 
$\chi_f(W_5^{\Box 3}) = \frac{722}{189}$.
This was a dramatically more intensive computation than computing the fractional chromatic number of second powers.
Can one obtain an analogous conjecture to Conjecture~\ref{conj:chi_fSquare}
for third powers? If so, is there an obvious way to generalize these polynomials to guess the value of 
$\chi_f(W_{2t+1}^{\Box \ell})$ for any $t$ and $\ell$?

We are also curious when Theorem~\ref{thm:upper_bound_triangle} provides a better estimate on $\mathscr{I}(W_{2t+1})$,
for a fixed $\ell$, than $\frac{1}{\chi_f(W^{\Box \ell}_{2t+1})}$.
We have seen that this is not the case for $t=2$ and $\ell= 2,3$,
but for all other pairs $t$ and $\ell$
where we have both parameters Theorem~\ref{thm:upper_bound_triangle} provides a better estimate
than $\frac{1}{\chi_f(W^{\Box \ell}_{2t+1})}$.
We conjecture that this is always the case for sufficiently large $t$.
Zhu \cite[Theorem~2]{zhu1996bounds} showed that for all graphs $G$,
$\mathscr{I}(G) = \lim_{\ell\rightarrow \infty }\frac{1}{\chi_f(W^{\Box \ell}_{2t+1})}$.

\begin{conjecture}\label{conj: better estimate}
    For all $t\geq 3$ and all $\ell\geq 1$,
    \[
    \frac{\alpha(W_{2t+1}^{\Box \ell }\Box K_3)}{3(2t+2)^{\ell}} < \frac{1}{\chi_f(W^{\Box \ell}_{2t+1})}.
    \]
\end{conjecture}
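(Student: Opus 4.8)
The plan is to prove the equivalent inequality
\[
\alpha\bigl(W_{2t+1}^{\Box \ell}\Box K_3\bigr)\cdot \chi_f\bigl(W_{2t+1}^{\Box \ell}\bigr) \;<\; 3(2t+2)^{\ell},
\]
writing $U_\ell=\frac{\alpha(W_{2t+1}^{\Box \ell}\Box K_3)}{3(2t+2)^{\ell}}$ and $V_\ell=\frac{1}{\chi_f(W_{2t+1}^{\Box \ell})}$ for the two competing upper bounds on $\mathscr{I}(W_{2t+1})$, so that the claim is exactly $U_\ell<V_\ell$. I would induct on $\ell$, treating $\ell=1,2$ exactly and the passage $\ell\to\ell+1$ by separate recursive bounds on numerator and denominator. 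The base case $\ell=1$ is immediate: Proposition~\ref{prop:higher_wheels_small_independence} gives $\alpha(W_{2t+1}\Box K_3)=2t+1$ and $\chi_f(W_{2t+1})=3+\frac1t$, so the inequality reduces to $(2t+1)(3t+1)<t(6t+6)$, i.e.\ $1<t$, valid for all $t\ge 2$. It is worth stressing at the outset that there is no soft route: the general inequality $\alpha(G\Box K_3)\le 3|V(G)|/\chi_f(G)$ is \emph{false} (already for a disjoint union of a clique and an independent set) and, decisively, it fails for $G=W_5^{\Box 2}$, where $\alpha=29$ but $3|V|/\chi_f=1188/41<29$. This is precisely why the hypothesis $t\ge 3$ cannot be dropped, and any proof must exploit the self-similar hub structure of the wheel powers rather than a generic LP estimate.

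For $\ell=2$ I would invoke the conjectured closed forms. Assuming Conjecture~\ref{Conj: Squares} in the form $\alpha(W_{2t+1}^{\Box 2}\Box K_3)=4t^2+5t+3$ and Conjecture~\ref{conj:chi_fSquare} in the form $\chi_f(W_{2t+1}^{\Box 2})=\frac{6t^2+7t+3}{2t^2+t+1}$, the inequality $U_2<V_2$ becomes, after clearing denominators and cancelling, the single cubic
\[
2t^3-5t^2+3>0.
\]
This cubic equals $-1$ at $t=2$ and $12$ at $t=3$ and is increasing thereafter, so it holds exactly for integers $t\ge 3$. Thus the conjectured $\ell=2$ case is true precisely on the stated range, which both confirms that $t\ge 3$ is the correct threshold and identifies Conjectures~\ref{Conj: Squares} and~\ref{conj:chi_fSquare} as the natural analytic inputs.

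For the inductive step I would control the two sides independently. For the numerator, a counting recurrence of the kind used in the proof of Theorem~\ref{thm:upper_bound_triangle}, applied to $(W_{2t+1}^{\Box\ell}\Box K_3)\Box W_{2t+1}$ with a fixed triangle of $W_{2t+1}$, yields an upper bound for $\alpha(W_{2t+1}^{\Box(\ell+1)}\Box K_3)$ in terms of $\alpha(W_{2t+1}^{\Box\ell}\Box K_3)$ and $\alpha(W_{2t+1}^{\Box\ell})$, propagating control of $U_{\ell+1}$ from $U_\ell$. For the denominator one needs a matching \emph{lower} bound on $\chi_f(W_{2t+1}^{\Box(\ell+1)})$, which I would try to assemble from optimal fractional cliques at levels $\ell$ and $1$ together with the orbit reduction of Theorem~\ref{thm:X_f_automorphism_orbits}, pushing the profile calculus of Section~5.2 up one power.

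The hard part will be the denominator. In contrast to $\chi(G\Box H)=\max(\chi(G),\chi(H))$, the fractional chromatic number is strictly super-additive on these powers — the table records $\chi_f(W_5^{\Box 2})=\frac{41}{11}>\frac72=\chi_f(W_5)$ — yet it stays bounded, converging to $1/\mathscr{I}(W_{2t+1})$ by Zhu's theorem \cite{zhu1996bounds}. Hence no multiplicative lower bound can hold, and the argument must capture the exact \emph{subdominant} growth of $\chi_f(W_{2t+1}^{\Box\ell})$. Because the target inequality is genuinely tight — it reverses at $t=2$ and, at $\ell=2$, degenerates to the cubic above — the induction must track a gap of size of order $1/t$, which seems to require essentially sharp knowledge of both sequences $\alpha(W_{2t+1}^{\Box\ell}\Box K_3)$ and $\chi_f(W_{2t+1}^{\Box\ell})$. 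Since even $\chi_f(W_{2t+1}^{\Box 2})$ is presently only conjectural and nothing is known for $\ell\ge 3$, I would regard an unconditional resolution of Conjectures~\ref{Conj: Squares} and~\ref{conj:chi_fSquare} as the necessary first milestone toward the full statement.
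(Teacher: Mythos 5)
You have been asked to prove what the paper states only as Conjecture~\ref{conj: better estimate}: the paper offers no proof of it, only the computational evidence of Table~\ref{table:smallwheel}, so there is no reference argument to match yours against --- and your proposal, as you yourself half-concede, is not a proof either. What you actually establish unconditionally is the case $\ell=1$: the reduction of $\frac{2t+1}{3(2t+2)}<\frac{t}{3t+1}$ to $(2t+1)(3t+1)<6t^2+6t$, i.e.\ $t>1$, is correct (and correctly shows the $\ell=1$ inequality already holds at $t=2$, so the threshold $t\ge3$ must come from higher powers). Your $\ell=2$ case is conditional on two \emph{further open conjectures}, Conjecture~\ref{Conj: Squares} and Conjecture~\ref{conj:chi_fSquare}; the cross-multiplication reducing $U_2<V_2$ to $2t^3-5t^2+3>0$ checks out, and it is a genuinely nice observation that this cubic is negative at $t=2$ and positive for $t\ge3$, matching the paper's remark that the inequality reverses for $t=2$, $\ell=2$. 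But conditioning on unproven conjectures does not discharge even that case, and for $\ell\ge3$ you prove nothing at all.

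The concrete gap is the inductive step, and it is not a technical loose end but the entire problem. On the numerator side, the recurrence from the proof of Theorem~\ref{thm:upper_bound_triangle} bounds $\alpha(G^{\Box p+1})$ by $\alpha(G^{\Box p}\Box K_k)+(n-k)\alpha(G^{\Box p})$ and controls only the limit; the paper explicitly warns that this argument does not yield the bound with $i(G^{\Box q})$ in place of $\mathscr{I}(G)$, so it gives no per-$\ell$ control of $U_{\ell+1}$ relative to $U_\ell$, let alone relative to $V_{\ell+1}$. On the denominator side you need a lower bound on $\chi_f\bigl(W_{2t+1}^{\Box(\ell+1)}\bigr)$ of matching precision; as you yourself note, $\chi_f(W_{2t+1}^{\Box\ell})$ stays bounded and $1/\chi_f(W_{2t+1}^{\Box\ell})\to\mathscr{I}(W_{2t+1})$ by Zhu's theorem, so no multiplicative estimate can work, and nothing in the paper --- the orbit/profile LP machinery of Section~5.2 included --- determines even $\chi_f(W_{2t+1}^{\Box 2})$ unconditionally, never mind $\ell\ge3$. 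Since $U_\ell$ and $V_\ell$ both converge to the same limit $\mathscr{I}(W_{2t+1})$, the claim $U_\ell<V_\ell$ is a statement about the second-order behaviour of two sequences whose exact values are unknown; your plan identifies the right obstructions but supplies no mechanism to overcome them. Presented honestly, your write-up is a correct verification of $\ell=1$, a conditional verification of $\ell=2$, and a research program --- which is consistent with, but does not improve on, the statement's status in the paper as an open conjecture.
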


Our final conjecture combines our intuition for Theorem~\ref{thm:upper_bound_triangle} and Conjecture~\ref{conj: better estimate}.

\begin{conjecture}\label{conj: final}
    There exists a constant $c>0$ such that if $G$ is a graph with $\chi(G) > \omega(G)$
    and
    $H$ is a largest induced subgraph of $G$ with $\chi(H) \leq \omega(G)$,
    then 
    $\chi_f(G) < c\Big(\frac{\omega(G)|V(G)|}{|V(H)|}\Big)$.
\end{conjecture}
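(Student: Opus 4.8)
The plan is to first reformulate the target inequality in the language of Theorem~\ref{thm:upper_bound_triangle}. Writing $\omega = \omega(G)$ and $n = |V(G)|$, I would begin by observing that an independent set in $G \Box K_\omega$ is precisely a partial proper $\omega$-colouring of $G$: the $K_\omega$-edges force at most one colour per vertex of $G$, while for each fixed colour the chosen vertices form an independent set of $G$. Hence $\alpha(G \Box K_\omega)$ equals the order of a largest induced $\omega$-colourable subgraph, i.e. $\alpha(G\Box K_\omega) = |V(H)|$. The conjecture therefore reads $\chi_f(G)\,\alpha(G\Box K_\omega) < c\,\omega n$, and since $\frac{\alpha(G\Box K_\omega)}{\omega n}$ is exactly the $\ell = 1$ upper bound on $\mathscr{I}(G)$ from Theorem~\ref{thm:upper_bound_triangle}, the statement is equivalent to asserting that this clique bound lies within a universal constant factor of the fractional bound $1/\chi_f(G)$ appearing in \eqref{eq:chi_chi_f_bound}.

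Next I would pass to the linear-programming dual of the fractional chromatic number: there is a weighting $y\colon V(G)\to\mathbb{R}_{\ge 0}$ with $\sum_{v}y_v = \chi_f(G)$ and $\sum_{v\in I}y_v \le 1$ for every independent set $I$. Because $H$ is a union of its $\omega$ colour classes, each an independent set of $G$, its total weight satisfies $\sum_{v\in V(H)}y_v \le \omega$. If $y$ were uniform, namely $y_v = \chi_f(G)/n$, this would give $|V(H)|\,\chi_f(G)/n \le \omega$, which is exactly the desired bound with $c=1$; here equality is attained, as for $G=C_5$. A complementary probabilistic construction, sampling $\omega$ independent sets from the normalised optimal fractional colouring, yields an $\omega$-colourable subgraph of order at least $n(1 - (1 - 1/\chi_f(G))^{\omega})$, which shows the $\ell=1$ bound is essentially tight when $\chi_f(G)$ is close to $\omega$. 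Thus any admissible $c$ must exceed $1$, and the genuine content of the conjecture lies in the regime $\chi_f(G)\gg\omega$, where the dual weights $y$ are necessarily far from uniform.

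To attack that regime I would combine two further estimates. First, a largest colour class of $H$ is an independent set of $G$, so $|V(H)|\le \omega\,\alpha(G)$, which reduces the entire conjecture to the cleaner inequality $\chi_f(G)\,\alpha(G) \le c\,n$. Second, the additive covering bound $\chi_f(G) \le \chi_f(H) + \chi_f(G - V(H)) \le \omega + \chi_f(G - V(H))$, valid for any vertex partition, trades the fractional chromatic number of $G$ for that of the complement of $H$; by the maximality of $H$ the induced subgraph $G - V(H)$ contains no large $\omega$-colourable induced subgraph, a structural fact I would exploit to bound $\chi_f(G - V(H))$ and then iterate the decomposition, peeling off successive largest $\omega$-colourable pieces and tracking how the leftover order shrinks at each stage.

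The hard part will be converting the weighted inequality $\sum_{v\in V(H)}y_v \le \omega$ into a genuine cardinality bound on $|V(H)|$, equivalently bounding the ratio $\chi_f(G)/(n/\alpha(G))$ by a universal constant. The difficulty concentrates in graphs where a small dense part forces $\chi_f(G)$ to be large while the bulk of the graph is sparse: there an $\omega$-colourable subgraph can be assembled almost entirely from vertices of tiny dual weight, so $\sum_{v\in V(H)}y_v\le\omega$ exerts essentially no control over $|V(H)|$. Overcoming this is the \emph{crux} of the argument. I would try to do so by symmetrising $y$ over $\mathrm{Aut}(G)$, in the spirit of Theorem~\ref{thm:X_f_automorphism_orbits}, to force near-uniformity on orbits, combined with the peeling argument above to handle high- and low-weight vertices separately; whether a single constant $c$ survives this conversion for every $G$ is precisely the delicate point on which the conjecture turns.
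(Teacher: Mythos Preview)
This statement is posed in the paper as an open conjecture in the Future Work section; the paper gives no proof, only the remark that it combines the intuition behind Theorem~\ref{thm:upper_bound_triangle} and Conjecture~\ref{conj: better estimate}. There is therefore no proof in the paper to compare your attempt against.

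Your reformulation is correct and illuminating: $\alpha(G\Box K_\omega)=|V(H)|$, so the conjecture asserts that the $\ell=1$, $k=\omega$ bound of Theorem~\ref{thm:upper_bound_triangle} lies within a universal factor of $1/\chi_f(G)$. But your proposal is not a proof --- you explicitly leave the crux unresolved --- and in fact the conjecture as written appears to be false, so no argument can close that gap. Given any $c>0$, pick a triangle-free graph $G_1$ with $\chi_f(G_1)>2c$ (iterated Mycielskians of $K_2$ suffice, since their fractional chromatic numbers tend to infinity), and let $G$ be the disjoint union of $G_1$ with $s$ isolated vertices. Then $\omega(G)=2$, $\chi(G)=\chi(G_1)\ge 3>2$, $\chi_f(G)=\chi_f(G_1)$, and since the $s$ isolated vertices already form a bipartite induced subgraph we have $|V(H)|\ge s$; hence
\[
c\cdot\frac{\omega(G)\,|V(G)|}{|V(H)|}\;\le\;\frac{2c\,(|V(G_1)|+s)}{s}\;\longrightarrow\;2c\;<\;\chi_f(G)
\]
as $s\to\infty$. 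Attaching a long path to $G_1$ instead of isolated vertices gives a connected counterexample. This is precisely the obstruction you yourself name --- a small dense part forcing $\chi_f$ high while the bulk of the graph is sparse --- and it is not a technical hurdle in the proof but a genuine counterexample; your proposed reduction to $\chi_f(G)\,\alpha(G)\le cn$ fails for the same family.
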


\section*{Acknowledgements}

The authors would like to thank G\v{e}na Hahn
for introducing us to this problem through their
talk \textit{Resurrection – revisiting old problems}
at the Canadian Math Society winter meeting in 2024.
We would also like to thank Sean Kim for assisting us in our ultimately unsuccessful 
attempts to compute the fractional chromatic number of $W^{\Box 3}_7$ and $W^{\Box 3}_9$
on the FIR supercomputer at Simon Fraser University.
Clow is supported by the Natural Sciences and Engineering Research Council of Canada (NSERC) through PGS D-601066-2025.

\bibliographystyle{plain}
\bibliography{ref}

\vspace{0.4cm}
\noindent Alexander Clow, Email: {\tt alexander\_clow@sfu.ca}\\
\textsc{Department of Mathematics, Simon Fraser University, Burnaby, BC, Canada}\\[0.5pt]

\noindent Hitesh Kumar, Email: {\tt hitesh.kumar.math@gmail.com, hitesh\_kumar@sfu.ca}\\ 
\textsc{Department of Mathematics, Simon Fraser University, Burnaby, BC, Canada}\\[0.5pt]

\noindent Shivaramakrishna Pragada, Email: {\tt shivaramakrishna\_pragada@sfu.ca, shivaramkratos@gmail.com}\\ 
\textsc{Department of Mathematics, Simon Fraser University, Burnaby, BC, Canada}

\newpage

\section*{Appendix A}

In this appendix, we 
provide independent sets that demonstrate
$\alpha(W^{\Box 2}_{2t+1}\Box K_3) \geq 4t^2+5t+3$
for all $2 \leq t \leq 5$.
Although we have examples of independent sets of the prescribed size in $W^{\Box 2}_{13}\Box K_3$, we do not include them here, since the resulting figure is not easily readable.
Recalling that independent sets in $W^{\Box 2}_{2t+1}\Box K_3$ correspond to $3$-colourable subgraphs of $W^{\Box 2}_{2t+1}$
we present each independent set as a partial $3$-colouring of $W^{\Box 2}_{2t+1}$.

We now give a more formal description of 
our figures.
Supposing $S$ is an independent set in 
$W^{\Box 2}_{2t+1}\Box K_3$, we will encode $S$ in a drawing of $W^{\Box 2}_{2t+1}$ as follows:
if $(i,j,x) \in S$, then we will colour vertex $(i,j)$ by colour $x$.
If for all $x \in \{1,2,3\}$, $(i,j,x) \notin S$, then vertex $(i,j)$ does not receive a colour.

\begin{figure}[!h]
    \centering
    \includegraphics[scale = 0.65]{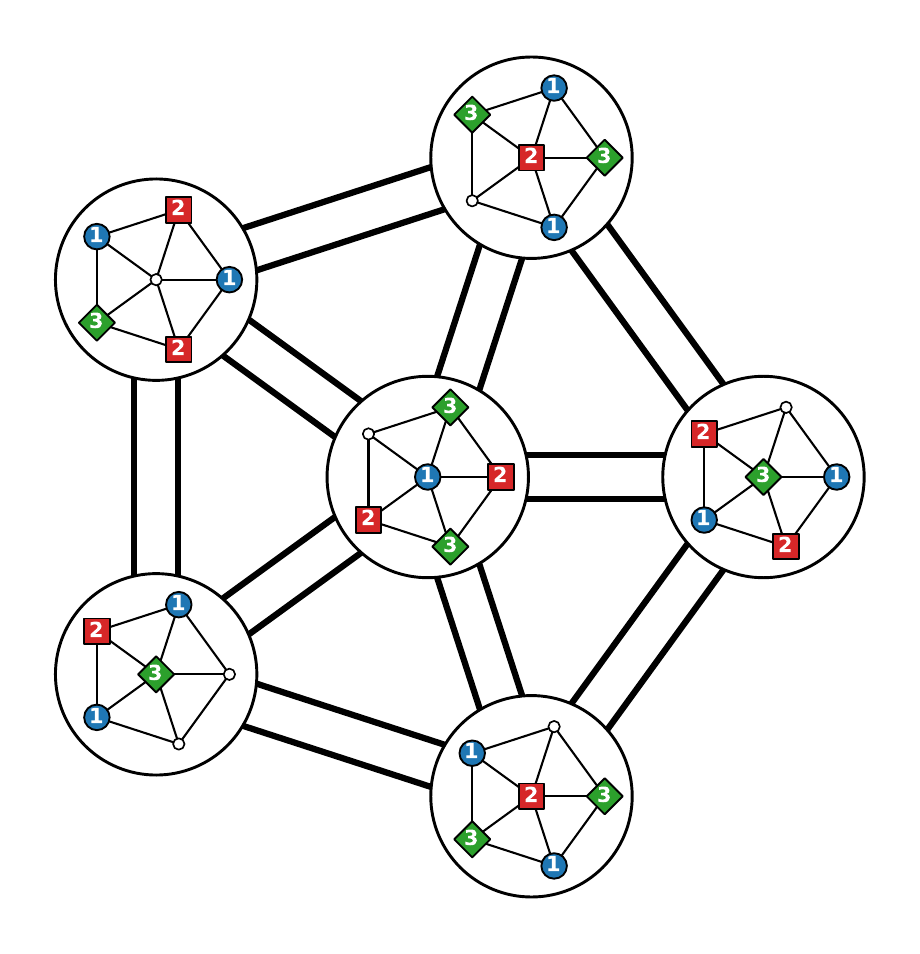}
    \caption{A maximum independent set, which has size $29$, in $W^{\Box 2}_{5}\Box K_3$}
    \label{fig:W^2_5}
\end{figure}

\begin{figure}[!h]
    \centering
    \includegraphics[scale = 0.75]{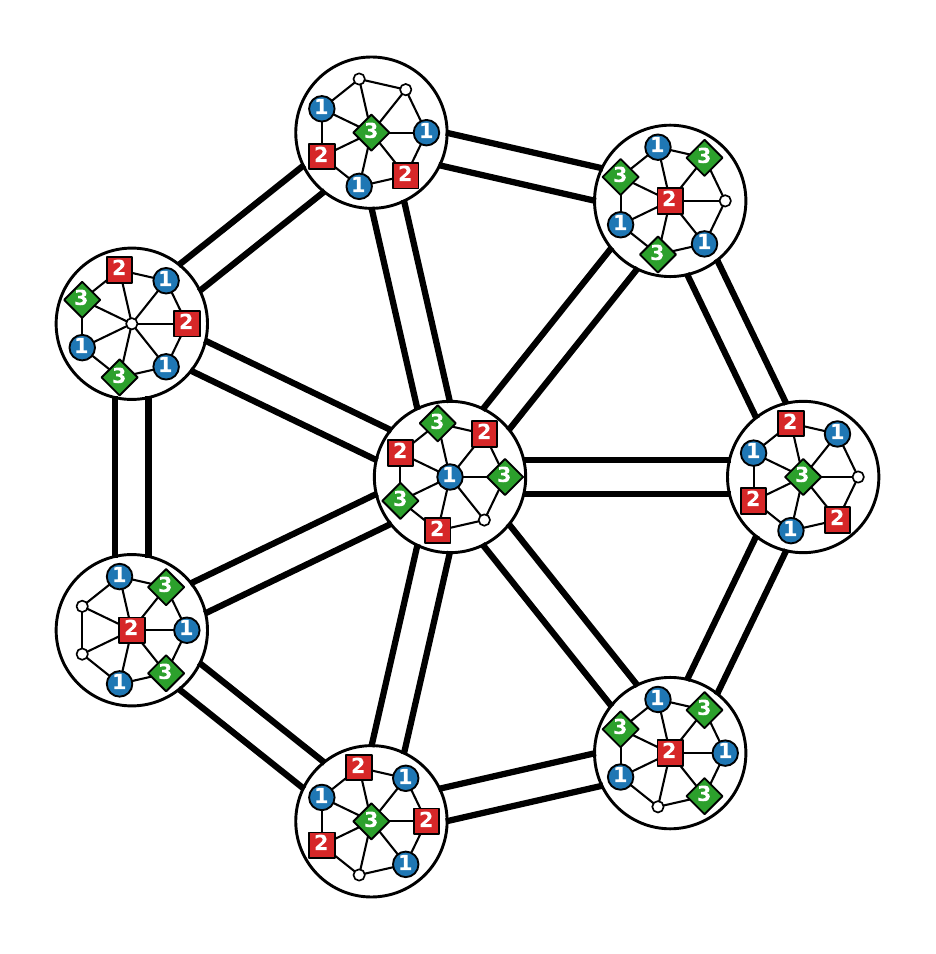}
    \caption{A maximum independent set, which has size $54$,
    in $W^{\Box 2}_{7}\Box K_3$}
    \label{fig:W^2_7}
\end{figure}

\begin{figure}[!h]
    \centering
    \includegraphics[scale = 0.75]{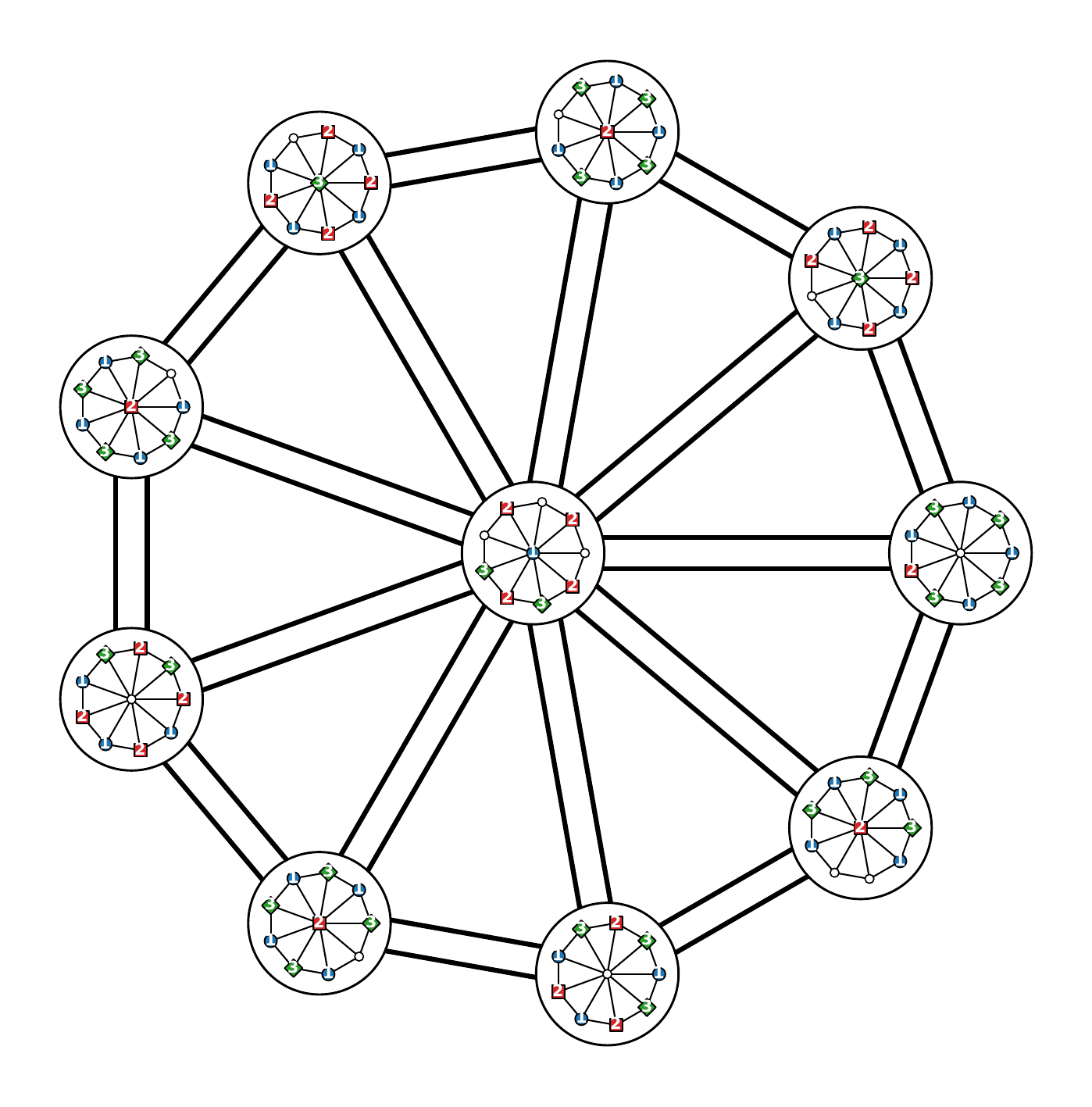}
    \caption{A maximum independent set, which has size $87$,
    in $W^{\Box 2}_{9}\Box K_3$}
    \label{fig:W^2_9}
\end{figure}

\begin{figure}[!h]
    \centering
    \includegraphics[scale = 0.75]{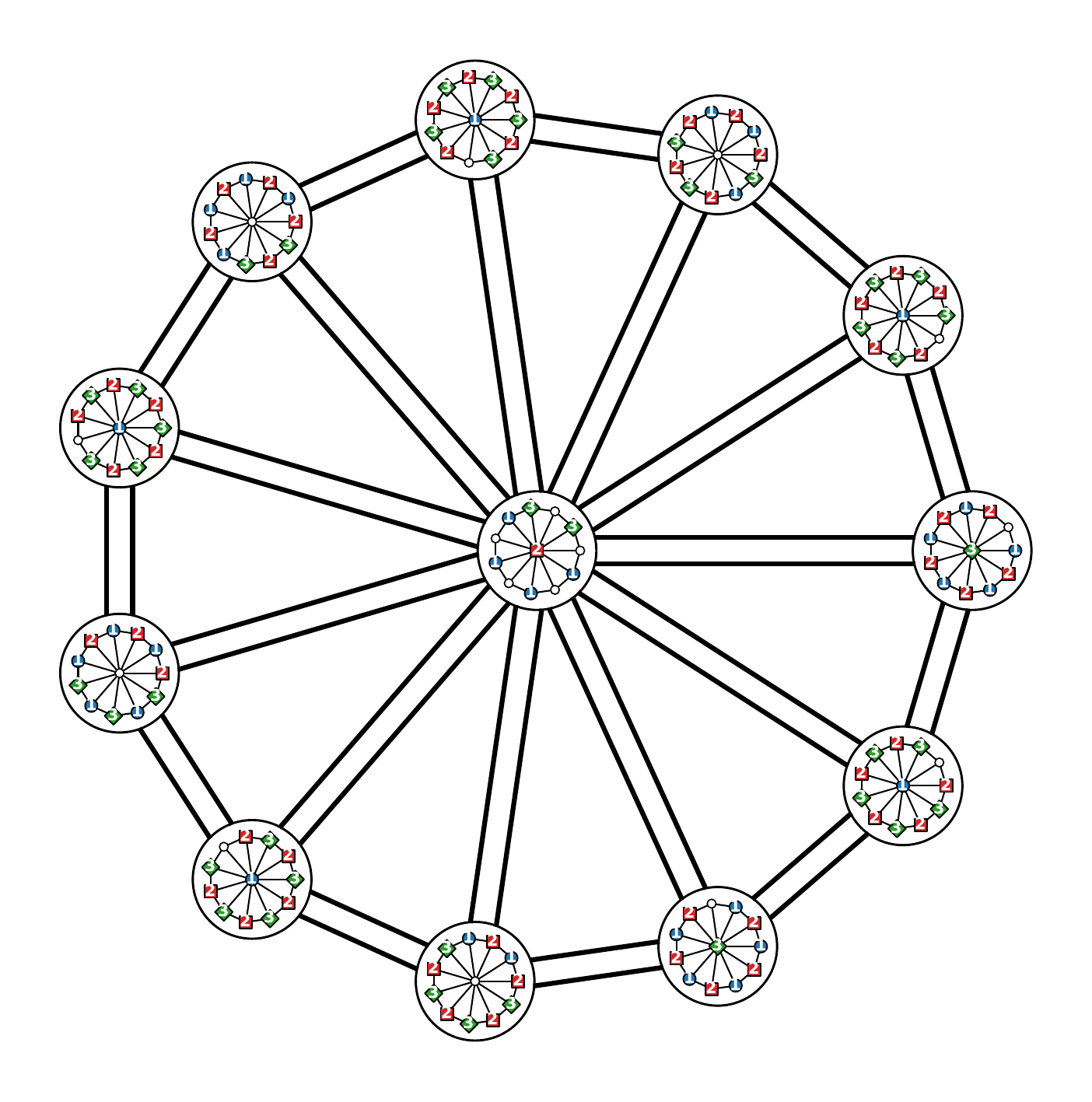}
    \caption{A maximum independent set, which has size $128$,
    in $W^{\Box 2}_{11}\Box K_3$}
    \label{fig:W^2_11}
\end{figure}

\end{document}